\newtheorem{thm}{Theorem}[section]
\newtheorem{prop}[thm]{Proposition}
\newtheorem{cor}[thm]{Corollary}
\newtheorem{lem}[thm]{Lemma}
\newtheorem{rema}[thm]{Remark}
\newtheorem{defn}[thm]{Definition}
\theoremstyle{remark}
\newtheorem{rem}[thm]{Remark}
\numberwithin{equation}{section}
\newcommand{\h}[0]{\mathfrak{h}}
\def \Z{\mathbb Z}
\def \C{\mathbb C}
\def \Q{\mathbb Q}
\def \N{\mathbb N}
\def \End{{\rm End}}
\def \mod{{\rm mod}}
\def \l{\lambda}
\newcommand{\1}{\bf 1}
\def \<{\langle}
\def \>{\rangle}
\def \l{\lambda }
\newcommand{\bea}{\begin{eqnarray}}
\newcommand{\eea}{\end{eqnarray}}
\newcommand{\be}{\begin {equation}}
\newcommand{\ee}{\end{equation}}
\newcommand{\W}{\mathcal W}
\newcommand{\la}{\langle}
\newcommand{\ra}{\rangle}
\def \br{\begin{rem}\label}
	\def \er{\end{rem}}
\begin{document}

\title[Screening operators for vertex operator algebras]{Classification of screening systems for lattice vertex operator algebras}
\author{Katrina Barron}
\address{Department of Mathematics, University of Notre Dame}
\email{kbarron@nd.edu}
\thanks{K. Barron was supported by Simons Foundation Collaboration Grant 282095}
\author{Nathan Vander Werf}
\address{Department of Mathematics, University of Nebraska at Kearny}
\email{vanderwerfnp@unk.edu}

\date{December 11, 2018}

\begin{abstract}
We study and classify systems of certain screening operators arising in a generalized vertex operator algebra, or more generally an abelian intertwining algebra with an associated vertex operator (super)algebra.   Screening pairs arising from weight one primary vectors acting commutatively on a lattice vertex operator algebra (the vacuum module) are classified into four general types, one type of which has been shown to play an important role in the construction and study of certain important families of $\mathcal{W}$-vertex algebras.   These types of screening pairs we go on to study in detail through the notion of a system of screeners, which are lattice elements or ``screening momenta" which give rise to screening pairs.  We classify screening systems for all positive definite integral lattices of rank two, and for all positive definite even lattices of arbitrary rank when these lattices are generated by a screening system.
\end{abstract} 

\maketitle

\section{Introduction} 
Up until about ten years ago, the main focus of the theory of vertex operator algebras, or more generally vertex operator superalgebras, had been on understanding rational vertex operator algebras, that is, vertex operator algebras whose admissible (or $\N$-graded) module category is of ``finite representation type", meaning that there are only finitely many indecomposable modules, and semi-simple, so that all indecomposable modules are irreducible.  The progress prior to 2006 led to several important breakthroughs, for instance Zhu's associative algebra and Zhu's modular invariance theorem \cite{Zhu}.  One of the fundamental results of \cite{Zhu} is the establishment of a correspondence between the irreducible $\mathbb{N}$-gradable modules of $V$ and Zhu's associative algebra $A(V)$.  In the same paper, Zhu also introduced a certain finiteness condition on $V$, called the $C_2$-cofinite condition, which was necessary for an important modular invariance result. It is now known that when $V$ satisfies the $C_2$-cofinite property, every $V$-module is $\N$-gradable, and $V$ has only finitely many irreducible modules. If $V$ is both $C_2$-cofinite and rational, then the category of $V$-modules is in fact equivalent to the category of $A(V)$-modules.  Up until 2007, it was conjectured that the $C_2$-cofinite condition and rationality were equivalent. This is now known not to be the case; see for instance \cite{Abe}.

Prior to this, irrational vertex operator algebras did not attract as much attention since it was thought that they would have too many irreducible objects, and thus they would not have interesting and important  features such as modular invariance properties.  This particular feature has a remedy though if we consider irrational vertex operator algebras of finite representation type, such as the irrational $C_2$-cofinite vertex operator algebras, in which there is a modified version of Zhu's original modular invariance theorem \cite{Miy}.  For this reason,  $C_2$-cofinite vertex operator algebras appear to be very special objects, the irrational ones now being the main object of study for logarithmic conformal field theory.  

Using Zhu's algebra, one can make a rough analogy between the category of $C_2$-cofinite vertex operator algebras and the category of finite dimensional associative algebras with unity, where the rational vertex operator algebras correspond to the semisimple subcategory.  But this leads to something of a paradox:  On the algebra side, for dimension over $\mathbb{C}$ larger than one, up to isomorphism, there are more (even infinitely more, e.g., in dimension greater than 4) nonsemisimple associative algebras with unity than there are semisimple ones.  But on the vertex operator algebra side, only a small handful of cases of nonrational $C_2$-cofinite vertex operator algebras are known at the moment, with even less understood about the structure of their representations, making it important to study the known examples and seek new models.

This paper, along with \cite{BV-W-algebras}, aims to start laying the foundations to systematically fill in some of the gaps of our class of examples and further our understanding of the phenomenon of irrational $C_2$-cofinite vertex operator algebras.  In particular, in this paper we give a systematic treatment of the notion of screening operators and screening pairs in the setting of generalized vertex operator algebras and abelian intertwining algebras associated to positive definite integral lattices.   We introduce the notion of ``screeners" which are the lattice elements $\alpha$ which give rise to screening pairs $(e^{-\alpha/p}_0, e^{\alpha/p'}_0)$, and correspond to special type of lattice element, or in physics terms, the ``screening momenta" which give rise to a screening pair.  We classify such screeners for all rank two positive definite integral lattices, and all positive definite even lattices that are generated by a screening system.

Screening pairs give crucial information that can be used in analyzing the structure of the vertex operator algebra realized by either the kernel of a screening operator or  arising itself as the intersection of the kernels of a pair, or tuple of screening operators.  In particular, screening operators and more generally screening pairs, appear to be important ingredients in the future construction and study of $C_2$-cofinite vertex operator algebras.

This ground work allows for future work in looking at kernels of screening operators and thus 
generalizing a certain class of $\mathcal{W}$-algebras that are one of the known cases of irrational $C_2$-cofinite vertex operator algebras, the so called  $\mathcal{W}(p,p')$-algebras. These $\mathcal{W}(p,p')$ algebras are generalizations of the $\mathcal{W}(p) = \mathcal{W}(p,1)$ algebras first studied in \cite{Ka1}, \cite{GK1}, \cite{GR}, with the case of symplectic fermions which correspond to $\mathcal{W}(2)$ (\cite{Ka2}) shown to be irrational $C_2$-cofinite in \cite{Abe}.

Screening operators for vertex operator algebras associated to a lattice  (the so called  free field realization approach to constructing new models)  was developed in \cite{Wakimoto}, \cite{Fel}, \cite{FFr}, although screening operators appeared in the physics literature earlier in, e.g., \cite{DF1} and \cite{DF2}. In the lattice setting screening operators are viewed as special intertwining operators in the Feign-Fuchs module theory.  The $\mathcal{W}(p)$ algebra was studied as the kernel of screening operators (rather than through the cohomology of the screening \cite{Fel}) in \cite{FFHST}, and  this family of $\mathcal{W}$-algebras has since been the most studied class of logarithmic models,  e.g.  in addition to the works cited above, in  \cite{am1}, \cite{Am5}, \cite{am2}, \cite{am10}, \cite{am4}, \cite{FGST1}--\cite{FGST3}, \cite{NT}, \cite{TW1}, \cite{TW2}, among others.  Generalizations to  $\mathcal{W}(p,p')$ algebras realized via kernels of screening operators were introduced in \cite{FGST1} and examples of these have been further studied in, for instance, \cite{AdM}, \cite{Am5}, \cite{am-imrn}, \cite{am8}, \cite{am9}, in addition to works cited above.  Higher rank cases are discussed in, for instance, \cite{FT} and \cite{am-imrn}.  For these and other related developments we refer the reader to \cite{EFH}, \cite{FFHST}, \cite{FHST} \cite{FGST1}--\cite{FGST3}, \cite{F1}, \cite{FG}, \cite{GK1}, \cite{GK2}, \cite{GR}, \cite{Ka1}, \cite{Ka2}, \cite{KaW}, \cite{F2} and \cite{Ga}.

In addition, there is a growing body of work devoted to various interactions between the $\mathcal{W}(p)$-algebras  and the quantum group $\bar{\mathcal{U}}_q(sl_2)$, or its variations, at a root of unity, first observed in \cite{FGST1}.  In \cite{FGST1}, it was proved for $p = 2$ and conjectured for general $p$, that as $\mathbb{C}$-linear categories, the category of representations for $\bar{\mathcal{U}}_q(sl_2)$ is equivalent to the category of representations for $\mathcal{W}(p)$.  This was then proved in the general $p$ case by \cite{NT}.  However the category of representations for $\mathcal{W}(p)$  is a braided tensor category, whereas that of $\bar{\mathcal{U}}_q(sl_2)$ is not.  Modifying the usual coproduct structure of $\bar{\mathcal{U}}_q(sl_2)$, an equivalence of ribbon categories is conjectured \cite{CGR}.  More generally, it is expected that this is just one instance of a broader connection between $\mathcal{W}$-algebras and quantum groups, e.g., \cite{Lentner}.

Using the rank one lattice embedding of $W(p,p')$ developed in \cite{am1}, we wish to generalize $\mathcal{W} (p,p')$ to a larger family of vertex operator algebras that are embedded in certain higher rank lattices. In general they appear to be just the right size in order to satisfy the $C_2$-cofinite condition, and in some of the examples we construct in a subsequent paper \cite{BV-W-algebras} (following and expanding upon \cite{V-thesis}), we prove the $C_2$-cofinite condition, as well as analyze other aspects of their structure.  In  \cite{BVY}, the authors along with Yang, show how the higher level Zhu's algebras as defined in \cite{DLM} can be used to understand the indecomposable objects for an irrational vertex operator algebra, and this work will be used to study the constructions arising from the screening operators studied and classified in this paper.

As indicated above, $\mathcal{W}$-algebras are some of the most exciting objects in representation theory, and have been studied intensively by mathematicians and physicists over the last two decades.  There are several different types of $\mathcal{W}$-algebras in the literature; see for instance \cite{FB}, \cite{BT}, \cite{W2}.   Here, for context to the motivations and main applications of this results of this paper, we give a definition of $\mathcal{W}$-algebras, or as we formally call them, {\it $\mathcal{W}$-vertex (super)algebras}, slightly modified from the definition in \cite{am4}:

\begin{defn} \label{W-algebra}
A {\em $\mathcal{W}$-vertex (super)algebra}, or simply {\em $\mathcal{W}$-algebra}, $V$ is a vertex (super)algebra strongly generated by a finite set of quasi-primary vectors $\omega_1,\dots ,\omega_k$ of weight $2$, and a finite set of primary vectors $u^{1},\dots,u^{r}$ for $r,k\in \Z_{+}$, such that the Virasoro element which grades $V$ by $L(0)$-eigenvalues is given by $\omega= \displaystyle\sum_{i=1}^{k} \omega_i$.
\end{defn} 

Here {\it strongly generated} means that elements of the form 
\begin{align}
v^{1}_{-j_1}\cdots v^{m}_{-j_m}\mathbf{1},\quad j_1,\dots j_m \geq 1
\end{align} 
span $V$ as a vector space with $v^{j}= u^i$ or $v^j= \omega_i$ for some $i$. We say $v$ is a {\em primary} vector if both $L(n)v=0$ for $n>0$ and $v$ is an eigenvector for $L(0)$. We say $v$ is {\em quasi-primary} if only $L(1)v=0$.  

The only change in the definition compared to that given in \cite{am4} is that we now allow for there to be more than one quasi-primary weight two vector in the strongly generating set.  This slight modification seems to be better suited in the higher rank case as it allows the tensor product of  $\mathcal{W}$-algebras (and certain extensions by additional primary vectors) to again be a $\mathcal{W}$-algebra.  

If $\mbox{wt}(u_i)=l_i\in \Z$, we say that $V$ is of {\it type $\mathcal{W}(2^r,l_1,\dots,l_k)$}. 

\begin{defn}
We call a vertex (super)algebra $V$ a {\em weak $\mathcal{W}$-algebra} if the strongly generating set not only consists of the vectors $w_1,\dots,w_r$ and $u^1,\dots,u^k$ in Definition \ref{W-algebra}, but also possibly some weight one vectors $v^1,\dots,v^s$ satisfying $L(2)v^l=0$, $L(1)v^l\neq 0$ and 
\[\omega= \sum_{j=1}^{r} \omega_j+ \sum_{l=1}^s (v^l)_{-1}v^l+a_l v^l_{-2}\mathbf{1}\]
 for certain $a_l\in \C$. 
\end{defn} 
 Then for instance, the lattice vertex operator superalgebra $V_L$ for $L$ a positive definite integral lattice is an example of a weak $\mathcal{W}$-algebra under this definition. 

The $\mathcal{W}(p)$-algebras are examples of $\mathcal{W}$-algebras and are known by several terms in the literature, including: The triplet vertex algebra (due to the presence of 3 primary vectors); the $\mathcal{W}(2,(2p-1)^3)$-algebra; the $\mathcal{W}(p,1)$-algebra (since the central charge is $c_{p,1}=1-6((p-1)^2/p)$).   Denoting this family of $\mathcal{W}$-algebras by $\mathcal{W}(p)$ for the positive integer $p$, also corresponds to the fact that $\mathcal{W}(p)$ is a subalgebra of the rank one lattice with generator having norm squared $2p$, which was discovered in \cite{am1}, where Adamovi\'{c} and Milas also proved it satisfied the $C_2$-cofinite condition and the irreducible modules were classified.

Some of the constructions and ideas used and discussed earlier for the $\mathcal{W}(p)$-algebra can be generalized to the $\mathcal{W}(p,p')$ models as well as higher rank analogues.   As stated earlier, the motivation of this paper is to lay the foundations for the construction and classification of higher rank generalizations of a such $\mathcal{W}$-algebras.

In this paper, first in Section 2, we review the notions of abelian intertwining algebra and vertex algebra as well as certain types of modules, following for example  \cite{DL1}, \cite{DL}, \cite{Li}. 

In Section 3, we give axiomatic definitions of the notion of screening operator and screening pair for an abelian intertwining algebra, and prove some basic properties.   We will denote a screening pair to be a pair of screening operators arising from weight one primary vectors, which commute and satisfy a certain additional property.  Such pairs will then be the focus of our study.    We note that the types of screening operators we focus on are those that arise as graded derivations in an abelian intertwining algebra $\tilde{V}$ which act on a vertex operator (super)algebra $V$ which is a subspace of $\tilde{V}$, sometimes called the ``vacuum module".    However, there are more general notions of screening operator, cf. \cite{Lentner}.

In Section 4, we restrict to vertex operator (super)algebras $V_L$ associated to a positive definite integral lattice $L$, first reviewing this notion, following \cite{LL}, \cite{BV1}, \cite{Xu}.   We classify screening pairs in $V_L$ coming from weight one primary vectors acting on the vacuum module $V_L$ into four types.  

In Section 5, we study the screening pairs which are the type that arise in the group algebra $1 \otimes \mathbb{C}[L]$ embedded in $V_L$, which in turn are determined by certain elements in the underlying lattice $L$ which we call {\it screeners}.  These screeners are certain lattice elements which give rise to the screening screening pairs, and thus in physics terminology, it would be appropriate to call these screeners, ``screening momenta".  

In Section 5, we go on to show that every screening pair of type two in our previous classification from Section 4, gives rise to a screener, and every screener of a certain type gives rise to a screening pair.  Thus to study screening pairs of this type, it suffices to study screeners, or in particular a linearly independent subset of the set of screeners, called a screening system, which generates a sublattice of $L$.  Such screening systems are of particular interest when the cardinality of the screening system coincides with the rank of the lattice.   

In fact, one of the main motivations of this work is to provide a setting and lay the foundation for studying the intersection of the kernel of multiple screening operators in $V_L$ in the case $L$ is of higher rank. This leads to many new vertex operator subalgebras of $V_L$ for certain $L$ that we predict are likely $C_2$-cofinite and irrational.  

In general it is difficult to classify screeners for a general positive definite integral lattice.  But we completely classify screeners in the following two cases: When the lattice is of rank two; and when $L$ is a positive definite even lattice generated by a screening system.  In the latter case, we prove that when $L$ is generated by a screening system then $L$ is an orthogonal direct sum of lattices that are rescaled simply laced root lattices.  

In Section 5.1, we classify all screeners for rank two positive definite integral lattices, and in Section 5.2, we classify all screeners in a simply laced root lattice.  Finally in Section 5.3, we summarize our results and classify all possible screeners for a positive definite even lattice of arbitrary rank generated by a screening system, by classifying all screeners that arise within orthogonal components (those arising from simply laced root systems) and across orthogonal components.   In particular, we show the following:

\begin{thm}
If $L$ is generated by screeners (i.e., momenta of screening pairs), then $L$ is the orthogonal direct sum of lattices of the form $\sqrt{p} K$, for $p$ a positive integer and $K$ a simply laced root lattice.  Furthermore the full set of screeners for each of the orthogonal components of the simply laced root lattices are as follows:

  For type $A_n$,  for $n >3$, and for $E_n$, for $n = 6, 7, 8$, the set of screeners is just the set of roots.  
  
  For $A_2$ and $A_3$, there are 6 additional screeners that are not roots which give rise to the root lattices for $G_2$ and $C_3$, respectively. 
  
 For the $D$-type lattices, there are 24 additional screeners that are not roots for $D_4$ giving rise to $F_4$, and for $n>4$, the lattice $D_n$ has $2n$ nonroot screeners giving the roots of $C_n$. 
 
 Finally, a screening system of $n$ orthogonal copies of $A_1$, for $n \geq 2$, has as its set of screeners the roots of $B_n$ (dual to $C_n$). 
\end{thm}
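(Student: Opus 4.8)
The plan is to deduce this theorem by assembling the two special-case classifications of the preceding subsections — the rank-two classification of Section 5.1 and the single simply-laced root-lattice classification of Section 5.2 — together with a decomposition argument reducing a general screener-generated lattice to mutually orthogonal rescaled root lattices. Throughout, write $\mathcal R\subset L$ for the set of all screeners of $L$; recall from Section 5 that every screener has tightly bounded norm, so $\mathcal R$ is finite and symmetric under $\alpha\mapsto-\alpha$.

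First I would prove the opening assertion. Partition $\mathcal R$ into connected components for the relation ``$\langle\alpha,\beta\rangle\neq 0$''. Because $L$ is generated by $\mathcal R$ and distinct components are mutually orthogonal, $L$ is the orthogonal direct sum of the sublattices $L_i$ generated by the components $\mathcal R_i$, so it suffices to analyze one connected component; assume henceforth that $\mathcal R$ is connected and $L=\langle\mathcal R\rangle$. Next, let $\Phi\subseteq\mathcal R$ be the set of screeners of minimal norm. Applying the rank-two list of Section 5.1 to pairs of such screeners shows that any two non-orthogonal elements of $\Phi$ subtend an angle of $60^\circ$, $90^\circ$ or $120^\circ$ (the $A_2$ configuration) — the $B_2$ and $G_2$ angles cannot occur among \emph{minimal} screeners — and, by the explicit rank-two screener sets of Section 5.1, the reflection $s_\alpha(\beta)$ of one element of $\Phi$ in another again lies in $\Phi$. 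Hence $\Phi$ is a reduced, reflection-stable root system spanning $L_{\mathbb Q}$, and it is irreducible because $\mathcal R$ is connected; by the Cartan--Killing classification $\Phi$ is of type $A_n$, $D_n$ or $E_n$. Since $L$ is even and $\Phi$ consists of its shortest screeners, all roots of $\Phi$ have a common norm $2p$ with $p\in\Z_{+}$, and $L$ is exactly the root lattice of $\Phi$ scaled by $\sqrt p$. This establishes the first sentence.

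Next, with $L=\sqrt p K$ for $K$ of fixed type $A_n$, $D_n$ or $E_n$, I would pin down \emph{all} screeners, not merely the minimal ones; this is the content of Section 5.2 and is where the real work lies. The tool is once more the rank-two constraint: a hypothetical screener $\beta$ of norm exceeding $2p$ is tested against the roots of $K$ nearest to it, and only the configurations appearing in Section 5.1 are permitted. One checks in turn: (a) for $K=A_n$ with $n>3$ and for $K=E_6,E_7,E_8$ there is no screener other than a root; (b) for $K=A_2$ the six vectors of norm $6p$ are screeners, and together with the roots they form a $G_2$ root system; (c) for $K=A_3\cong D_3$ and, more generally, for $K=D_n$ the $2n$ vectors $\pm\sqrt p\,(2e_i)$ of norm $4p$ are screeners, giving $C_3$ resp.\ $C_n$ when $n\neq 4$, while for $K=D_4$ the sixteen further vectors $\sqrt p(\pm e_1\pm e_2\pm e_3\pm e_4)$ are also screeners, producing the $24$ non-root screeners that complete the roots to $F_4$; and (d) between two orthogonal $A_1$-components carrying the \emph{same} scaling the $4$ vectors $\sqrt p(\pm\alpha_i\pm\alpha_j)$ of norm $4p$ are screeners, so an orthogonal system of $n$ copies of $\sqrt p A_1$ has screener set of type $B_n$ (with $2n$ short and $2n(n-1)$ long roots). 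It remains to rule out any screener straddling two distinct orthogonal components in all other cases: a would-be cross screener $\alpha_1+\alpha_2$ of admissible norm forces each $\alpha_j$ into the narrow norm window of its own component, and matching against (a)--(c) leaves only the equal-scaling $A_1$-with-$A_1$ possibility of (d).

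I expect the principal obstacle to be the second half of the argument: the exhaustive rank-two bookkeeping needed to certify that the screeners listed in (a)--(d) are \emph{all} of them. The delicate points are the $D_4$ case, where triality is responsible for the extra sixteen screeners completing $F_4$ and one must confirm nothing beyond them occurs, and the cross-component $A_1^{\,n}\to B_n$ case, where one must show both that the cross vectors are genuinely screeners and that the presence of a single one forces all the component scalings to coincide.
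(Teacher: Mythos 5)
Your overall architecture matches the paper's, and your final lists are correct: six extra screeners for $A_2$ and $A_3$ (giving $G_2$, $C_3$), $2n$ for $D_n$ with $n>4$ (giving $C_n$), $24$ for $D_4$ (giving $F_4$), none for $A_n$ with $n>3$ and for $E_6,E_7,E_8$, and the $B_n$ system from equal-scale orthogonal $A_1$'s. Your route to the first sentence, however, differs from the paper's Theorem \ref{L-even-classification-theorem}, which takes a $\Z$-basis of screeners and eliminates, via Lemmas \ref{interaction of bilinear form} and \ref{basis screener results}(ii), until basis screeners of different norms are orthogonal; you instead pass to connected components of the non-orthogonality graph and study the minimal-norm screeners $\Phi$. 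That alternative can be made to work, but two of its steps are not right as written. The claim that $\Phi$ is irreducible ``because $\mathcal{R}$ is connected'' is false, and your own case (d) is the counterexample: for $n$ orthogonal copies of $\sqrt{p}A_1$ the long screeners $\alpha_i\pm\alpha_j$ make the screener graph connected while the minimal screeners form the reducible system $A_1^n$. So a connected component need not be a single irreducible $A$--$D$--$E$ lattice, and your phase-two framing ``$L=\sqrt{p}K$ with $K$ of fixed type $A_n,D_n,E_n$'' has to be repaired (drop irreducibility; then the cross-component analysis you sketch at the end is genuinely needed, as in Section 5.3 of the paper). In addition, that $\Phi$ spans the component and that the component lattice equals $\Z\Phi$ is asserted rather than proved; one needs, e.g., that any screener non-orthogonal to some minimal screener differs from a minimal screener by a minimal screener (Lemmas \ref{interaction of bilinear form} and \ref{basis screener results}(ii)), hence lies in $\Z\Phi$, and that a screener orthogonal to all of $\Phi$ would disconnect the graph---or simply the paper's basis reduction, which supplies exactly this.

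The more substantial gap is in your items (a)--(c): the statements that $A_n$ ($n>3$), $E_6$, $E_7$, $E_8$ have no non-root screeners and that $D_n$ has exactly the listed extras are where essentially all the work lies, and you give only the plan ``test a hypothetical screener against the nearby roots.'' The paper's proofs of Theorems \ref{classification A} and \ref{classification-Dn} need more than local rank-two configurations: Lemma \ref{2p 4p 6p} restricts norms to $2p,4p,6p$; Lemma \ref{divides deg G} uses the discriminant to rule out some norms outright; and the remaining possibilities are excluded by writing the norm as an explicit sum of squares in the chosen coordinates and combining the congruence constraints from Lemma \ref{interaction of bilinear form} with parity and minimization arguments (the exclusion of norm-$4$ screeners in $E_7$ is a genuinely global computation, not a check against neighboring roots). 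Likewise your cross-component step should invoke the concrete criterion of Lemma \ref{basis screener results}(iii), which forces equal norms and $\langle\alpha\pm\beta,L\rangle\subset\langle\alpha,\alpha\rangle\Z$ and is what confines cross screeners to equal-scale $A_1$ summands. In short: correct skeleton and correct answers, but the irreducibility step is erroneous and the decisive exhaustiveness verifications are missing.
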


Note that this result implies that in some sense, the set of screeners in the orthogonal components of a positive definite even lattice generated by screeners, is maximal in terms of the possible root system.  \\

{\bf Acknowledgements:} This paper consists, essentially, of the first three chapters of the second author's Ph.D. Thesis \cite{V-thesis} under the supervision of the first author.  The authors would like to thank Antun Milas, Drazan Adamovic, and Jinwei Yang for useful comments and guidance in the writing of the thesis from which this paper is derived.  The authors thank the referee for pointing out significant points of clarification in the expository parts of the paper as well as some mathematical improvements.  The first author is the recipient of Simons Foundation Collaboration Grant 282095, and greatly appreciates their support.

\section{Abelian intertwining algebras}

We first recall the notion of abelian intertwining algebra given by Dong and Lepowsky in \cite{DL1}, \cite{DL}. Let $G$ be an abelian group equipped with two functions,
\begin{align*}
F:G\times G\times G\rightarrow \C^\times, \quad \quad \Omega:G\times G\rightarrow \C^\times
\end{align*}
such that $(F,\Omega)$ is a normalized abelian $3$-cocycle for $G$ with coefficients in $\C^\times$ (\cite{MacLane}), that is $F$ and $\Omega$ satisfy
\begin{eqnarray} \label{norm conditions}
F(g_1,g_2,0)=F(g_1,0,g_3)=F(0,g_2,g_3) &=& 1\\
\Omega(g_1,0)=\Omega(0,g_2)&=&1, 
\end{eqnarray}
\begin{multline}
F(g_1,g_2,g_3)F(g_1,g_2,g_3+g_4^{-1})F(g_1,g_2+g_3,g_4)  \\  
 \cdot F(g_1+g_2,g_3,g_4)F(g_2,g_3,g_4)=1, 
\end{multline}
\begin{multline}
F(g_1,g_2,g_3)^{-1}\Omega(g_1,g_2+g_3)F(g_2,g_3,g_1)^{-1} \\
=\Omega(g_1,g_2)F(g_2,g_1,g_3)^{-1}\Omega(g_1,g_3), 
\end{multline}
\begin{multline}\label{norm conditions-last}
F(g_1,g_2,g_3)\Omega(g_1+g_2,g_3)F(g_3,g_1,g_2) =\\
\Omega(g_2,g_3)F(g_1,g_3,g_2)\Omega(g_1,g_3). 
\end{multline}
Denote by
\begin{align}
q:G &\longrightarrow \C^\times  \\
g &\mapsto q(g) =\Omega(g,g) \nonumber
\end{align}  
 the corresponding quadratic form, and $e^{2\pi i b(\cdot,\cdot)}$ its associated (symmetric) bilinear form, where
 \begin{align}
b:G\times G\longrightarrow \C/Z\end{align}
is given by 
\begin{align}
e^{2\pi i b(g_1,g_2)}= q(g_1+g_2)q(g_1)^{-1}q(g_2)^{-1}(=\Omega(g_1,g_2)\Omega(g_2,g_1))
\end{align} 
for $g_1,g_2\in G$. Define
\begin{align}
B: G\times G\times G& \longrightarrow \C^\times \\
 (g_1,g_2,g_3)&\mapsto F(g_2,g_1,g_3)^{-1}\Omega(g_1,g_2)F(g_1,g_2,g_3).  \nonumber
\end{align}
Let $T$ be a positive integer. Assume that $b$ is restricted to the values
\begin{align}\label{T} 
b:G\times G\rightarrow \left(\frac{1}{T}\Z\right)/ \Z
\end{align} 

\begin{defn} An {\em abelian intertwining algebra of level $T$ associated with $G$, $F$, and $\Omega$} is a vector space $V$ with a $(1/T)\Z$-$G$ bigradation 
\begin{eqnarray}
\tilde{V}=\coprod_{g\in G}\tilde{V}^{g}= \coprod_{n\in \frac{1}{T}\Z }\tilde{V}_{(n)}
\end{eqnarray}	
equipped with a linear map
\begin{eqnarray}
Y: \tilde{V} &\longrightarrow& (\End \tilde{V})[[x^{\frac{1}{T}},x^{\frac{-1}{T}}]]\\ 
u &\mapsto& Y(u,x)=\sum_{n\in \frac{1}{T}\Z }u_{n}x^{-n-1} \nonumber
\end{eqnarray}
called the {\em generalized vertex operator map} and with a distinguished vector ${\bf 1}\in \tilde{V}^{0}\cap \tilde{V}_{(0)}$,
called the {\em vacuum vector},
satisfying the following conditions for
$g,h\in G,\; u,v\in \tilde{V}$ and $l\in \frac{1}{T}\Z$:
\begin{eqnarray}
& &u_{l}\tilde{V}^{h}\subset \tilde{V}^{g+h}\;\;\;\mbox{ if }u\in V^{g};\\
& &u_{l}v=0\;\;\;\mbox{ if $l$ is sufficiently large};
\label{etruncation}\\
& &Y({\bf 1},x)=1;\label{evacuum}\\
& &Y(v,x){\bf 1}\in \tilde{V}[[x]]\;\;\mbox{ and }
v_{-1}{\bf 1}\;(=\lim_{x\rightarrow 0}Y(v,x){\bf 1})=v;\\
& &Y(u,x)|_{\tilde{V}^{h}}=\sum_{n\equiv b(g,h)\mod \Z}u_{n}x^{-n-1}
\;\;\;\mbox{ if }u\in \tilde{V}^{g}
\end{eqnarray}
\begin{eqnarray}\label{egjacobi0}
& & \ \ \ x_{0}^{-1}\left(\frac{x_{1}-x_{2}}{x_{0}}\right)^{b(g,h)}
\delta\left(\frac{x_{1}-x_{2}}{x_{0}}\right)Y(u,x_{1})Y(v,x_{2})w\\
& &-B(g_1,g_2,g_3)x_{0}^{-1}\left(\frac{x_{2}-x_{1}}{x_{0}}\right)^{b(g,h)}
\delta\left(\frac{x_{2}-x_{1}}{-x_{0}}\right)Y(v,x_{2})Y(u,x_{1})w\nonumber\\
&=&F(g_1,g_2,g_3)x_{2}^{-1}\delta\left(\frac{x_{1}-x_{0}}{x_{2}}\right)Y(Y(u,x_{0})v,x_{2})
\left(\frac{x_{1}-x_{0}}{x_{2}}\right)^{-b(g,g')}w \nonumber
\end{eqnarray}
(the {\em generalized Jacobi identity}) if $u\in \tilde{V}^{g}$, $v\in \tilde{V}^{h}$, $w \in \tilde{V}^{g'}$, and $\delta(x)=\sum_{n\in\Z}x^n$.
In addition there exists an $\omega\in \tilde{V}^0\cap \tilde{V}_{(2)}$ such that $Y(\omega,x)= \sum_{n\in\Z} L(n)x^{-n-2}$, where the $L(n)$ give a representation of the Virasoro algebra with central charge $\textbf{c}\in \C$, and $L(0)$ acts semisimply on $\tilde{V}$ giving the $\frac{1}{T}\mathbb{Z}$ grading, i.e.,  $\tilde{V}_{(r)}= \{v\in \tilde{V} \; | \; L(0)v=r v  \}$, and  $Y(L(-1)v,x)= \frac{d}{dx} Y(v,x)$ for any $v\in \tilde{V}$.  
\end{defn}

This structure is denoted 
\begin{align*}
(\tilde{V},Y,\textbf{1},\omega,T,G,F(\cdot,\cdot,\cdot), \Omega(\cdot,\cdot)) 
\end{align*}
or briefly $\tilde{V}$.  If $F$ is the trivial cocycle, $\tilde{V}$ is called a {\it generalized vertex operator algebra}.  If $T=1$ and $G=0$, we have the notion of a {\it vertex operator algebra}.

\begin{defn} \label{canonical VOA inside GVOA}	
	From the normalization conditions (\ref{norm conditions})--(\ref{norm conditions-last}) combined with (\ref{egjacobi0}), the space  $$V=\tilde{V}^0 \cap  \coprod_{n\in\frac{1}{T}\Z}\tilde{V}_{(n)}$$ has the structure of a vertex algebra and we will call it {\em the vertex algebra associated with $\tilde{V}$}.  In the case $G$ has even order, suppose there exists $k\in G$ with $2k=0$ and $g(k,k)=1 \mbox{ mod } \Z/2\Z$. Then the space  
	\begin{align}
	V&= (\tilde{V}^0 \cap  \coprod_{n\in \frac{1}{T}\Z}\tilde{V}_{(n)})\oplus ( \tilde{V}^{k}\cap  \coprod_{n\in \frac{1}{T}\Z}\tilde{V}_{(n)}) \label{vosa}  \\
	&:=  V^{(0)}\oplus V^{(1)},\mbox{ where } \{(0),(1)\}=\Z/2\Z\subset G 
	\end{align}  is {\em a vertex operator superalgebra associated to $\tilde{V}$}. Such a vertex superalgebra is uniquely determined if and only if $k$ is uniquely determined by the conditions above.  If in addition we assume that the following two finiteness conditions hold:  

	1. $\tilde{V}_{(r)}\cap V^0=0$ for $r\in \Z $ sufficiently negative; 

	2. $\tilde{V}_{(r)}\cap V^0$ is finite dimensional for all $r\in \mathbb{\Z} $, \\
	then the space $$V=\tilde{V}^0 \cap  \coprod_{n\in\Z}\tilde{V}_{(n)}$$ is the {\em vertex operator algebra associated with $\tilde{V}$}, and in the case $\tilde{V}_{(r)}\cap V^k$ has the same finiteness conditions for $r\in \frac{1}{2}\Z $, the space  
	\begin{align}
	V&= (\tilde{V}^0 \cap  \coprod_{n\in \Z}\tilde{V}_{(n)})\oplus ( \tilde{V}^{k}\cap  \coprod_{n\in \frac{1}{2}\Z}\tilde{V}_{(n)}) \label{vosa2}  
	\end{align}  is a {\em vertex operator superalgebra associated to $\tilde{V}$}.
\end{defn} 

From now on we will assume that $\tilde{V}$ is an abelian intertwining algebra such that the finite conditions 1 and 2 in the definition above hold.

Below we define various notions of module for a vertex operator superalgebra, manly to contextualize the introduction and motivation for why screening operators are important and in what settings.

For a vertex operator superalgebra $V$, a weak $V$-module $M$ is a vector space $M$ and a map $Y_M:V\longrightarrow (\mathrm{End} M ) [[x, x^{-1}]]$ such that $u_{l}w=0$ for $l$ sufficiently large, $Y_M(\mathbf{1},x)=id_M$, and a slightly modified Jacobi identity as in (\ref{egjacobi0}) holds, (see for instance \cite{LL} for more details).

An {\it $\mathbb{N}$-gradable weak $V$-module} (also often called an {\it admissible $V$-module}) $M$ for a vertex operator algebra $V$ is a weak $V$-module that is $\N$-gradable, $M = \coprod_{k \in \N} M(k)$, with $v_m M(k) \subset M(k + \mathrm{wt} v - m -1)$ for weight homogeneous $v \in V$, $m \in \Z$ and $k \in \N$, and without loss of generality, we can assume $M(0) \neq 0$.  We say elements of $M(k)$ have {\it degree} $k \in \mathbb{N}$.

An {\it $\mathbb{N}$-gradable generalized weak $V$-module} $M$ is an $\mathbb{N}$-gradable weak $V$-module that admits a decomposition into generalized eigenspaces via the spectrum of $L(0) = \omega_1$ as follows: $M=\coprod_{\lambda \in{\C}}M_\lambda$ where $M_{\lambda}=\{w\in W \, | \, (L(0) - \lambda Id_M)^j w= 0 \ \mbox{for some $j \in \mathbb{Z}_+$}\}$, and in addition, $M_{n +\lambda}=0$ for fixed $\lambda$ and for all sufficiently small integers $n$. We say elements of $M_\lambda$ have {\it weight} $\lambda \in \mathbb{C}$.

A {\it generalized $V$-module} $M$ is an $\mathbb{N}$-gradable weak $V$-module where $\dim W_{\lambda}$ is finite for each $\lambda \in \mathbb{C}$.   

An {\it (ordinary) $V$-module} is an $\mathbb{N}$-gradable generalized weak $V$-module such that  the generalized eigenspaces $M_{\lambda}$ are in fact eigenspaces, i.e. $M_{\lambda}=\{w\in M \, | \, L(0) w=\lambda w\}$.

We often omit the term ``weak" when referring to $\mathbb{N}$-gradable weak and $\mathbb{N}$-gradable generalized weak $V$-modules.   

A  $V$-module $M$ is {\it irreducible} or {\it simple} if $M$ has no nontrivial proper submodules. 
The term {\it logarithmic} is also often used in the literature to refer to $\mathbb{N}$-gradable weak generalized modules  or generalized modules, particularly when these modules are non simple indecomposable.  

A vertex operator algebra $V$ is said to be {\it rational} if every $\N$-gradable module decomposes into a direct some of irreducible modules.  

We denote by $C_2(V)$ the vector space spanned by elements of the form $u_{-2}v$ for $u,v\in V$. We say $V$ is {\it $C_2$-cofinite} if the vector space $V/C_2(V)$ is finite dimensional.

\section{The notion of a screening operator}

Let $\tilde{V}$ be an abelian intertwining algebra satisfying the finiteness conditions 1 and 2 in Definition \ref{canonical VOA inside GVOA}, and let $V\subset \tilde{V}$ be a vertex operator superalgebra as in Definition \ref{canonical VOA inside GVOA}.  Suppose that for $g,g',h\in G$, we have that $(g,g'),(g,h)\in \Z\mbox{ mod }\Z/2\Z$.  Then extracting the coefficient $x_0^{-1}x_1^{-1}$ from the Jacobi identity (\ref{egjacobi0}), the endomorphism $u_0$ for $u \in \tilde{V}^g$ acts as a derivation in the sense that: 
\[ u_0 Y(v,x_2)w= Y(u_0 v,x_2)w + (-1)^{(g,h)}Y(v,x_2)u_0 w,\]
for $v\in \tilde{V}^h$, and $w\in \tilde{V}^{g'}$. In particular, this holds whenever $u,v \in V$ as in (\ref{vosa}).  However, unless $T=1$ in $(\ref{T})$, $a_0$ does not act as a derivation in this sense on all of $\tilde{V}$. This motivates the next definition:

\begin{defn} \label{screening operator definition}
	Let $\tilde{V}$ be an abelian intertwining algebra with vertex operator subsuperalgebra $V$.	We define the space of {\em graded derivations associated to $V$ and $\tilde{V}$} to be ${\rm GrDer}(V,\tilde{V})=\coprod_{g\in G} {\rm GrDer}_g$, where ${\rm GrDer}_g$, for $g \in G$, are the linear maps $f:V\rightarrow \tilde{V}^g\oplus \tilde{V}^{g+k}$ satisfying:

(i) $f(u_n v)= (f(u))_n v+ (-1)^{(g,h)} u_n (f(v))$;  

(ii) $[f,L(-2)]=0$ and $f(\mathbf{1}) =0$;\\
for  $u\in V^{(h)}$, $v\in V$, $h\in \Z/2\Z\subset G $, and $n\in\Z$.

If $f\in {\rm GrDer}(V, \tilde{V})$ such that $f$ is nilpotent when projected onto $V$,  then we call $f$ a {\em screening operator on $V$. }
\end{defn}

\begin{lem} \label{lem 3.2} Let $f\in{\rm GrDer}(V,\tilde{V})$.  Then 

(i) $\omega\in \mbox{ker} \, f$ and $[f,L(n)]=0$; 

(ii) $\mbox{ker} \, f$ is a vertex operator subsuperalgebra of $V$.
\end{lem}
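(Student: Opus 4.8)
The plan is to prove part (i) first, then derive part (ii) as a consequence. For (i), the key observation is that condition (ii) in Definition \ref{screening operator definition} says $[f,L(-2)]=0$ and $f(\mathbf{1})=0$; since $\omega = L(-2)\mathbf{1}$, applying $f$ gives $f(\omega) = f(L(-2)\mathbf{1}) = L(-2)f(\mathbf{1}) + (\text{derivation term})$, and because $f(\mathbf{1})=0$ together with $\mathbf{1}$ being in the appropriate graded piece, all terms vanish, so $\omega \in \ker f$. To get $[f,L(n)]=0$ for all $n$, I would use the derivation property (i) applied to $u = \omega$: since $L(n) = \omega_{n+1}$, the identity $f(\omega_{n+1}v) = (f(\omega))_{n+1}v + (-1)^{(0,h)}\omega_{n+1}f(v)$ and the fact that $f(\omega)=0$ and $\omega \in \tilde V^0$ (so the sign is $+1$) together yield $f(L(n)v) = L(n)f(v)$, i.e. $[f,L(n)]=0$. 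One should check that $\omega$ lying in $\tilde V^0$ makes the parity sign trivial, which is the only subtlety here.

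For part (ii), I would verify the defining closure properties of a vertex operator subsuperalgebra for $K := \ker f$. \emph{Closure under vertex operators:} if $u,v \in K$, then by the derivation property $f(u_n v) = (f(u))_n v + \pm\, u_n(f(v)) = 0 + 0 = 0$, so $u_n v \in K$ for all $n \in \Z$; hence $K$ is closed under all the modes. \emph{Vacuum:} $f(\mathbf 1) = 0$ by hypothesis, so $\mathbf 1 \in K$. \emph{Conformal vector:} by part (i), $\omega \in K$, and moreover $K$ is $L(n)$-stable since $[f,L(n)]=0$ implies $f(L(n)v) = L(n)f(v) = 0$ whenever $v \in K$; in particular $K$ inherits the $\tfrac1T\Z$-grading (or $\Z$-grading in the VOA case) from the $L(0)$-eigenspace decomposition, and inherits the $\Z/2\Z$-superspace structure since $f$ respects the $G$-grading. \emph{Grading/finiteness conditions:} the grading on $K$ is just the restriction of that on $V$, so the truncation-from-below and finite-dimensionality of graded pieces are inherited automatically. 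Together these show $K$ is a vertex operator subsuperalgebra of $V$ with the same Virasoro element $\omega$.

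I expect the main (and really only) obstacle to be bookkeeping the parity signs $(-1)^{(g,h)}$ correctly: one must confirm that when the derivation acts relative to $\omega$ or $\mathbf 1$ — both of which lie in $\tilde V^0$ — the sign is trivial, and that in the computation $f(u_n v)$ for $u,v \in \ker f$ the sign is irrelevant because both summands vanish regardless. There is also a minor point in part (i): strictly speaking $[f,L(-2)]=0$ is given, and one should note that the full family $[f,L(n)]=0$ is \emph{derived} from the derivation axiom applied to $\omega$ rather than assumed, so the logical order is $f(\mathbf 1)=0 \Rightarrow f(\omega)=0 \Rightarrow [f,L(n)]=0$ for all $n$. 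Everything else is a direct unwinding of the definitions.
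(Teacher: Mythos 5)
Your proposal is correct and follows essentially the same route as the paper: $f(\omega)=f(L(-2)\mathbf{1})=L(-2)f(\mathbf{1})=0$ from axiom (ii) of the definition (note this needs no extra ``derivation term''), then $[f,L(n)]=(f\omega)_{n+1}=0$ from the derivation axiom applied to $\omega\in\tilde{V}^0$, and closure of $\ker f$ under all modes from the derivation property with both summands vanishing. Your additional verification of the vacuum, grading, and $L(n)$-stability of the kernel is just a more explicit unwinding of what the paper leaves implicit.
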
  

\begin{proof}
To prove property $(i)$, we note that since $f$ satisfies condition $(ii)$ in Definition \ref{screening operator definition}, we have $f\omega= fL(-2)\mathbf{1}= L(-2)f\mathbf{1}=0$.  Furthermore, this and property $(i)$ in Definition \ref{screening operator definition}, imply $[f,L(n)]= (f\omega)_{n+1}=0$. 

To prove $(ii)$, we use  $(i)$ and the fact that for $f\in{\rm GrDer}_g$, $u\in V^{(i)}$, for $i = 0$ or $1$, and $u,v \in \mbox{ker} \, f$, we have $f(u_n v)= (fu)_n v+ (-1)^{(g,(i))}u_n f(v)=0$, so that $u_n v\in \mbox{ker} \, f$. Then $\mbox{ker} \, f$ is a vertex operator subsuperalgebra of $V$. 
\end{proof}


We will now discuss the general technique for finding certain special kinds of screening operators that come from an abelian intertwining algebra.  Let $v\in \tilde{V}$ be a weight-homogeneous vector.  Then from the generalized Jacobi identity (\ref{egjacobi0}),
\begin{eqnarray*}  
[L(n),v_m] &=&  (L(-1)v)_{n+m+1}+  (n+1)(\mbox{wt} \, v) v_{n+m}  \\ 
& & \quad + \sum_{i\geq 2} \binom{n+1}{i}  (L(i-1)v)_{n+m+1-i}  \\
&= & (-m+ (n+1)(\mbox{wt} \, v-1)) v_{n+m} \\
& & \quad + \sum_{i\geq 2} \binom{n+1}{i}  (L(i-1)v)_{n+m+1-i}. \\ 
\end{eqnarray*} 

If $v$ is a primary vector, that is a singular vector for the Virasoro algebra, we have that 
 \begin{align*}  
 [L(-2),v_m] &=  (-m -(\mbox{wt} \, v-1)) v_{m-2}
 \end{align*} 
 and in particular, 
  \begin{align} \label{primary commutator}
  [L(-2),v_{-{\tiny\mbox{wt}} \, v+1}] &= 0.
  \end{align}
Applying equation \ref{primary commutator} to $\mathbf{1}$ gives that condition $(ii)$ in Definition \ref{screening operator definition} is satisfied if and only if wt $v\leq 1$. Furthermore, condition $(i)$ in Definition \ref{screening operator definition} is satisfied when wt $v= 1$. In this case, $v_0$ is a screening operator and $\mbox{ker} \, v_0$ is a vertex operator subsuperalgebra of $V$. 

Although screening operators of the form $v_0$ for $v\in \tilde{V}$ a weight one primary vector are the most tractable class of screening operators, $\mbox{ker} \, v_0$ is still difficult to study.  We now introduce some additional structure that has proved to be fruitful for studying these subalgebras. We will say that $v\in\tilde{V}$ has order $r\in \Z_+$ if there exists a smallest $r> 0$ such that $v_{n_1} \cdots  v_{n_r}\mathbf{1}\in V$ for all $n_j\in \frac{1}{T}\Z$, $j=1,\dots,r$, and $n_1+\cdots + n_r \in\Z$. If such an $r$ exists, we will say $p=nr$ for $n\in \Z_+$ is a {\it period} of $v$.  

\begin{defn} \label{screening pair definition} 
	We call two screening operators arising as the zero modes of weight $1$ primary vectors $v_1,v_2\in \tilde{V}$ a {\em screening pair}, denoted $((v_1)_0,(v_2)_0)$, of type $(p,q)$ for $p,q\in \Z_+$ if the following hold: 

	$(i)$ $[(v_1)_0,(v_2)_0]=0$; 

	$(ii)$  $v_1$ has period $q$ and $v_2$ has period $p$, and $p,q$ are the smallest positive integers such that there exists $n_1,\dots n_q,m_1, m_2\dots m_p\in \frac{1}{T}\Z$ and $m\in \Z$ with $n_1+\cdots + n_q \in\Z$ and $m_1+\cdots+ m_p\in\Z$ such that 
\[((v_1)_{n_1} \cdots  (v_1)_{n_q}\mathbf{1})_m (v_2)_{m_1} \cdots  (v_2)_{m_p}\mathbf{1}=r\mathbf{1}.\]
	for some $r\in \C^\times.$ 
\end{defn}
\begin{rema}
It follows that if $((v_1)_0,(v_2)_0)$ is a screening pair of type (p,q), then $((v_2)_0,(v_1)_0)$ is a screening pair of type $(q,p)$.
\end{rema}

Here we give some motivation as to why screening pairs should be defined in this way and why they are important.  Condition $(i)$ in Definition \ref{screening pair definition} implies that if $u\in \mbox{ker} \, (v_2)_0$, then $(v_1)_0^{jq} u\in\mbox{ker} \, (v_2)_0$ for any $j\in\Z_+$. Moreover, if $w$ is a primary vector, then from Lemma \ref{lem 3.2} so is $((v_1)_0)^j w$ for any $j\in\Z_+$.  Condition $(ii)$ of Definition \ref{screening pair definition} then implies that  
applying $((v_1)_0)^q$ to ``enough" elements in $\mbox{ker} \, (v_2)_0$ allows one to find a decomposition of $\mbox{ker} \, (v_1)_0$ as a $\mbox{ker} \, (v_1)_0\cap \mbox{ker} \, (v_2)_0$-module and obtain a nice strongly generating set, if such a set exists.

In \cite{A-2003},\cite{am1}--\cite{am4}, screening pairs as defined above were the essential ingredients used to construct the triplet vertex operators algebra $\mathcal{W}(p,q)$ of central charge $c_{p,q}$ from the vertex operator algebra associated with a rank one even lattice $L=\sqrt{2pq}\Z$, and were crucial ingredients in proving for instance that $\mathcal{W}(p,1)$ is simple and $C_2$-cofinite. Following the notation of \cite{AdM}, \cite{am1}, in the case that $q=1$, we will call $(v_2)_0=\tilde{Q}$ the {\it short screening operator} and $(v_1)_0=Q$ the {\it long screening operator}. In \cite{BV-W-algebras} (cf. \cite{V-thesis}), we study the kernel of long screening operator in a screening pair and the intersections of the kernel of multiple screening operators under certain conditions.

A  {\it weak homomorphism} of two generalized vertex operator algebras $(V_1,\omega_1)$ and  $(V_2,\omega_2)$ is a linear map $f:V_1\rightarrow V_2$ satisfying $f(u_n v) = f(u)_n f(v)$ for $u,v \in V_1$, $f(\mathbf{1}_1)=\mathbf{1}_2$. A weak homomorphism is a {\it homomorphism} if in addition $f(\omega_1)=\omega_2$.  We will denote by $\mbox{wAut} (\tilde{V})$ the group of weak generalized vertex operator algebra automorphisms of $\tilde{V}$.

The next remark easily follows from the definitions:

\begin{rema} \label{lem for isomorphism} 
	Let $V$ be a vertex superalgebra. Let $\omega_1$ and $\omega_2$ be elements of $V$ such that $(V,\omega_1)$ and $(V,\omega_2)$ are vertex operator superalgebras. Let $(V_1,\omega_1)$, $(V_2,\omega_2)$ be vertex operator subsuperalgebras of $(V,\omega_1)$, $(V,\omega_2)$, respectively. Let $g$ be a weak homomorphism of $V$ that sends $V_1$ into $V_2$ and satisfies
	$g(\omega_1)=\omega_2$. Then $g$ induces a vertex operator superalgebra homomorphism from $V_1$ into $V_2$. 
\end{rema} 

 The next proposition gives an upper bound on non-isomorphic equivalence classes of vertex operator subsuperalgebras contained in a fixed vertex operator superalgebra.

\begin{prop}\label{prop for isomorphism} Let $V$, $\omega_1$, $\omega_2$, $g$ be as in Remark \ref{lem for isomorphism} with $V\subset \tilde{V}$, for $\tilde{V}$ an abelian intertwining algebra and assume $g$ can be extended to a weak homomorphism on $\tilde{V}$. Let $v^1_0,v^2_0\in {\rm GrDer}(V,\tilde{V})$ with $v^1,v^2\in \tilde{V}$ such that $\omega_1\in \mbox{ker} \, v^1_0$ and $\omega_2\in \mbox{ker} \, v^2_0$, and such that $g(v^1)=v^2$. Then $g$ induces a homomorphism of vertex operator algebras: 
	$$ g|_{V}: \mbox{ker} \, v^1_0|_V\longrightarrow \mbox{ker} \, v^2_0|_V. $$
	If $g\in \mbox{wAut}(V)$, then $g|_{V}$ is a vertex operator superalgebra isomorphism.
\end{prop}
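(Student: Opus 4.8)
The plan is to show that $g$, being a weak homomorphism, carries the kernel of $v^1_0$ acting on $V$ into the kernel of $v^2_0$ acting on $V$, and then invoke Remark~\ref{lem for isomorphism} to promote this to a vertex operator superalgebra homomorphism. First I would recall that by hypothesis $g$ extends to a weak homomorphism $\tilde g$ of $\tilde V$, so in particular $g(u_n v) = g(u)_n g(v)$ for all $u, v \in \tilde V$ and $g(\mathbf 1) = \mathbf 1$; here we only need this intertwining property applied to $u = v^1 \in \tilde V$ and $v \in V$, together with $g(v^1) = v^2$.

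The key computation is: for $v \in \mbox{ker}\, v^1_0|_V$, we have
\[ g(v^1_0 v) = g((v^1)_0 v) = g(v^1)_0 g(v) = (v^2)_0 g(v) = v^2_0\, g(v). \]
Since $v \in \mbox{ker}\, v^1_0|_V$ means $v^1_0 v = 0$ (here I should be slightly careful: $v^1_0 v \in \tilde V$ a priori, and ``$\mbox{ker}\, v^1_0|_V$'' means the kernel of the composition of $v^1_0$ with the projection onto $V$; but since $v^1_0 \in {\rm GrDer}(V,\tilde V)$ and we have assumed the relevant $V$-component vanishes, and $g$ is linear and respects the $G$-gradation, applying $g$ and projecting commutes appropriately), we get $v^2_0\, g(v)$ lies in the component of $\tilde V$ that projects to zero in $V$, i.e. $g(v) \in \mbox{ker}\, v^2_0|_V$. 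Thus $g$ maps $\mbox{ker}\, v^1_0|_V$ into $\mbox{ker}\, v^2_0|_V$. By Lemma~\ref{lem 3.2}(ii) both of these are vertex operator subsuperalgebras (of $(V,\omega_1)$ and $(V,\omega_2)$ respectively), and by hypothesis $\omega_1 \in \mbox{ker}\, v^1_0$, $\omega_2 \in \mbox{ker}\, v^2_0$, with $g(\omega_1) = \omega_2$ assumed implicitly through Remark~\ref{lem for isomorphism}'s setup. Then Remark~\ref{lem for isomorphism}, applied with $V_1 = \mbox{ker}\, v^1_0|_V$, $V_2 = \mbox{ker}\, v^2_0|_V$, yields that $g|_V$ induces a vertex operator superalgebra homomorphism $\mbox{ker}\, v^1_0|_V \to \mbox{ker}\, v^2_0|_V$.

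For the last sentence, if $g \in \mbox{wAut}(\tilde V)$ then $g$ is invertible and $g^{-1}$ is again a weak homomorphism; moreover $g^{-1}(v^2) = v^1$ and $g^{-1}(\omega_2) = \omega_1$, so the argument above applies symmetrically to give that $g^{-1}|_V$ maps $\mbox{ker}\, v^2_0|_V$ into $\mbox{ker}\, v^1_0|_V$. Since $g|_V$ and $g^{-1}|_V$ are mutually inverse as linear maps and both respect all the structure, $g|_V$ restricts to a vertex operator superalgebra isomorphism between the two kernels. (One should note $\mbox{wAut}$ should presumably be $\mbox{wAut}(\tilde V)$ rather than $\mbox{wAut}(V)$ for the extension hypothesis to be automatic, but in any case the needed facts are that $g$ is bijective and both $g, g^{-1}$ intertwine the relevant vertex operators.)

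The main obstacle — more bookkeeping than genuine difficulty — is being precise about what ``$\mbox{ker}\, v^i_0|_V$'' means: $v^i_0$ is a graded derivation \emph{from} $V$ \emph{into} $\tilde V$, so its kernel as a map into $\tilde V$ versus its kernel after projection to $V$ could differ, and one must check that $g$ is compatible with whichever convention is intended (it is, because $g$ preserves the $G$-grading, sending $\tilde V^h \oplus \tilde V^{h+k}$-type decompositions to their images, so projections intertwine). Once that is pinned down, everything reduces to the one-line calculation $g(v^1_0 v) = v^2_0\, g(v)$ plus a citation of Remark~\ref{lem for isomorphism} and Lemma~\ref{lem 3.2}.
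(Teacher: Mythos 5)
Your proposal is correct and follows essentially the same route as the paper: the whole proof rests on the single computation $g(v^1_0 v)=g(v^1)_0\,g(v)=v^2_0\,g(v)$ for $v\in\mbox{ker}\,v^1_0|_V$, followed by an appeal to the bijectivity of $g$ for the isomorphism claim. The only cosmetic difference is that you phrase the reverse inclusion via $g^{-1}$ being a weak homomorphism, while the paper argues directly that any $v\in\mbox{ker}\,v^2_0$ is $g u$ with $u\in\mbox{ker}\,v^1_0$; these are the same idea, and your side remarks (on the meaning of $\mbox{ker}\,v^i_0|_V$ and on $\mbox{wAut}(\tilde V)$ versus $\mbox{wAut}(V)$) are harmless clarifications rather than gaps.
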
  

\begin{proof}
Note that if $v\in \mbox{ker} \, (v^1_0)$, then $$0=g(v^1_0 v)= v^2_0 gv,$$ so that $gv\in \mbox{ker} \, (v^2_0)$. Let $v\in   \mbox{ker} \, v^2_0$. If $g$ is a bijection on $V$, we have that $v=gu$ for some $u\in V$, and moreover, $u\in \mbox{ker} \, (v^1_0)$, since 
$$0= v^2_0v= v^2_0g u= g(v^1_0 u).$$ Hence $g$ is an isomorphism. 
\end{proof} 

\begin{cor}
	Let $g$, $\omega_1$, $\omega_2$, and  $\tilde{V}$ satisfy the assumptions in Proposition \ref{prop for isomorphism}. Let $v_1,\dots, v_n$  be a collection of weight $1$ primary vectors for $(\tilde{V},\omega_1)$, and $v'_1,\dots v'_n$  be a collection of weight $1$ primary vectors for $(\tilde{V},\omega_2)$ such that $g v_i=v_i'$ for $1\leq i\leq n$. Then $g$ induces an isomorphism $g|_{V}$ of vertex operator superalgebras: 
	$$ g|_{V}: \cap_{i=1}^n \mbox{ker} \, ((v_i)_{0})|_V\longrightarrow \cap_{i=1}^n\mbox{ker} \, ((v'_i)_{0}|_V. $$
\end{cor}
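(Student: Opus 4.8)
The plan is to deduce this corollary from Proposition \ref{prop for isomorphism} by a straightforward induction on $n$, using that the intersection of kernels of graded derivations is itself a vertex operator subsuperalgebra to which the proposition can be reapplied. The key observation is that each $(v_i)_0$ lies in ${\rm GrDer}(V,\tilde{V})$ (by the discussion preceding Definition \ref{screening pair definition}, weight one primary vectors give graded derivations), and by Lemma \ref{lem 3.2}(ii) the kernel $\mbox{ker}\,((v_i)_0)|_V$ is a vertex operator subsuperalgebra containing $\omega_1$ — indeed $\omega_1 \in \mbox{ker}\,((v_i)_0)$ by Lemma \ref{lem 3.2}(i). So at each stage of the induction the hypotheses of Proposition \ref{prop for isomorphism} remain in force on the successively smaller subalgebras.

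First I would set $V_1 = \cap_{i=1}^n \mbox{ker}\,((v_i)_0)|_V$ and $V_2 = \cap_{i=1}^n \mbox{ker}\,((v'_i)_0)|_V$ and argue, exactly as in the proof of Proposition \ref{prop for isomorphism}, that $g(V_1) \subseteq V_2$: if $v \in V_1$ then $0 = g((v_i)_0 v) = (v'_i)_0 (gv)$ for every $i$ (using $g(v_i) = v'_i$ and that $g$ extends to a weak homomorphism on $\tilde{V}$), so $gv \in \mbox{ker}\,((v'_i)_0)$ for all $i$, i.e. $gv \in V_2$. Since $g$ is a weak homomorphism with $g(\omega_1) = \omega_2$ and $\omega_1 \in V_1$, $\omega_2 \in V_2$, Remark \ref{lem for isomorphism} gives that $g|_{V_1}$ is a homomorphism of vertex operator superalgebras $V_1 \to V_2$. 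For the reverse inclusion, when $g \in \mbox{wAut}(V)$ we use bijectivity: given $v \in V_2$, write $v = gu$ with $u \in V$; then for each $i$, $0 = (v'_i)_0 v = (v'_i)_0 g u = g((v_i)_0 u)$, and since $g$ is injective, $(v_i)_0 u = 0$, so $u \in V_1$. Hence $g|_{V_1} : V_1 \to V_2$ is a bijective homomorphism, i.e. an isomorphism.

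Alternatively, one can phrase this purely inductively: the base case $n = 1$ is Proposition \ref{prop for isomorphism} itself; for the inductive step, apply the proposition with $V$ replaced by $\cap_{i=1}^{n-1}\mbox{ker}\,((v_i)_0)|_V$ (which carries both vertex operator superalgebra structures with Virasoro elements $\omega_1, \omega_2$ by Lemma \ref{lem 3.2}) and the single screener $v_n$, noting that $\omega_1 \in \mbox{ker}\,((v_n)_0)$, $\omega_2 \in \mbox{ker}\,((v'_n)_0)$, and $g(v_n) = v'_n$. The only mild point to check is that restricting $g$ and the modes to the smaller subalgebra does not disturb any of the hypotheses — but this is immediate since the graded derivation property (i) of Definition \ref{screening operator definition} and the weak homomorphism property are both closed under restriction to invariant subspaces.

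I do not anticipate a genuine obstacle here; the statement is essentially a bookkeeping consequence of Proposition \ref{prop for isomorphism}. The one place requiring a small amount of care is confirming that each $(v_i)_0$ is genuinely a graded derivation in ${\rm GrDer}(V,\tilde{V})$ and that its kernel in $V$ is a vertex operator subsuperalgebra containing the relevant Virasoro element — so that the proposition is applicable at every stage — but this is exactly what the discussion surrounding equation (\ref{primary commutator}) and Lemma \ref{lem 3.2} provide, since the $v_i$ are weight one primary vectors for $(\tilde{V},\omega_1)$ and the $v'_i$ for $(\tilde{V},\omega_2)$.
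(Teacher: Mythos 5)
Your direct argument is correct and is essentially the paper's intended route: the paper gives no separate proof of this corollary, treating it as immediate from Proposition \ref{prop for isomorphism} applied simultaneously to each pair $(v_i,v_i')$, which is exactly what your first paragraph does (including the use of Lemma \ref{lem 3.2} to place $\omega_1,\omega_2$ in the respective kernels and Remark \ref{lem for isomorphism} to get the homomorphism). Note only that your alternative inductive phrasing is the shakier of your two arguments, since $\omega_2$ need not lie in $\cap_{i=1}^{n-1}\mbox{ker}\,((v_i)_0)$ and $g$ carries that subalgebra onto the primed intersection rather than into itself, so the Proposition does not literally apply with ``$V$ replaced'' by that subalgebra---but your primary argument does not depend on it.
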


\section{Screening pairs associated to lattice vertex operator superalgebras}

This section is concerned with the existence and classification of the possible types of screening pairs that arise in a lattice vertex operator superalgebra, finding that there are 4 general types--- Theorem 4.3. We will then go on in Section 5 to classify one of these types of screening pairs for all rank 2 lattices, and for all lattices that are generated by lattice elements that give rise to a screening pair.

We will first review some facts about positive definite integral lattices, and give the basic construction for the vertex operator superalgebra associated to such a lattice, following for instance \cite{LL}, \cite{BV1}, \cite{Xu}. For more information on the theory of lattices, see for instance \cite{Ebe}. 

Let $L$ be a positive definite integral lattice.  That is, $L$ is a finitely generated free abelian group with $\Z$-valued positive definite symmetric bilinear form $\langle\cdot,\cdot \rangle$, with natural $\Z_2$-grading $L=L^{(0)}\cup L^{(1)}$, where $L^{(i)}= \{\alpha\in L\,|\, \langle\alpha,\alpha\rangle=i+2\Z\}$. It is clear that one can embed $L\subset \mathbb{R}^d$, where $d$ is the rank of the lattice and $\langle \cdot , \cdot\rangle$ is the usual dot product.  If we choose an ordered $\Z$-basis $B=\{\alpha_i\}_{i=1}^d$ in $\mathbb{R}^d$, one can associate a Gram matrix $G=(\langle \alpha_i,\alpha_j\rangle)_{i,j=1}^d= B^T B$ with the positive definite assumption implying $\mbox{Det}(G)>0$.  Two lattices $L$ and $L'$ are isomorphic if there exists an isometry $\nu:L\rightarrow L'$; that is, $\nu$ is a group isomorphism that further satisfies $\langle \nu \alpha,\nu\beta \rangle'=\langle  \alpha,\beta \rangle$.   
Similarly, a lattice with ordered $\Z$-bases $B$ and $B'$ and Gram matrices $G$ and $G'$ are isomorphic if there exists a matrix $A\in \mbox{Gl}_d(\Z)$ with $G'=AGA^{T}$. 
We define the rational lattice dual to $L$ to be:
$$L^\circ=\{\beta\in \Q\otimes L| \, \langle \alpha,\beta\rangle\in \Z \mbox{ for all } \alpha\in L\}$$ and also the {\em extended dual space}, 
\begin{align} \label{extended dual}
\bar{L}^\circ=\{\beta\in \Q\otimes L| \, \langle \alpha,\beta\rangle\in (1-\frac{i}{2})\Z \mbox{ for all } \alpha\in L^{(i)}\}.
\end{align} 
Notice that $L^\circ/L$ is a finite abelian group, and $|L^\circ/L|=\mbox{Det } G$.

$L$ is called unimodular if $\mbox{Det } G=1$, or equivalently $L=L^\circ$.  For lattices, $L_1$ and $L_2$, let $ L_1\oplus L_2$ denote the orthogonal direct sum of $L_1$ and $L_2$. We call $L$ indecomposable if it can not be written as an orthogonal direct sum of two lattices of smaller rank. Given a rank 1 lattice with generator $\alpha$ satisfying $\langle\alpha,\alpha\rangle=p$, we will refer to the corresponding lattice as $\Z\alpha$ or $\sqrt{p}\Z$.  

We will need some basic facts arising in this setting.  Firstly from basic linear algebra, we have:

\begin{lem} (\cite{N}, p.\ 13) \label{M}
	Let $d_1,\dots, d_n\in \Z$, not all zero, and let $\delta_n= \mbox{ gcd }(d_1,\dots, d_n)$. Then there is an $n \times n$ matrix $M_n$ over $\mathbb{Z}$ with first column $[d_1,\dots,d_n]$ and determinant $\delta_n$. 
\end{lem}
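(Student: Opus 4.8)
The plan is to prove this as an elementary fact of linear algebra over $\Z$, essentially an induction on $n$ using the structure of the greatest common divisor. First I would set up the induction: the base case $n=1$ is trivial, since if $d_1 \neq 0$ we may take $M_1 = [d_1]$ (or, if one wants $\delta_1 = |d_1|$, the sign can be absorbed), noting that the hypothesis ``not all zero'' and the convention $\gcd(d_1,\dots,d_n) = \delta_n$ handle the sign ambiguity. For the inductive step, suppose the result holds for $n-1$. Given $d_1,\dots,d_n \in \Z$ not all zero, set $\delta_{n-1} = \gcd(d_1,\dots,d_{n-1})$ and $\delta_n = \gcd(d_1,\dots,d_n) = \gcd(\delta_{n-1}, d_n)$.

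There are two cases. If $d_1 = \cdots = d_{n-1} = 0$, then $\delta_{n-1} = 0$ and $\delta_n = |d_n| \neq 0$; in this case one can simply take $M_n$ to be a permutation-type matrix that places $d_n$ (up to sign) in the first column and has the right determinant, e.g. build $M_n$ from an identity block together with the entry $d_n$ arranged so the determinant is $\delta_n$. Otherwise not all of $d_1,\dots,d_{n-1}$ are zero, so by the inductive hypothesis there is an $(n-1)\times(n-1)$ integer matrix $M_{n-1}$ with first column $[d_1,\dots,d_{n-1}]^T$ and $\det M_{n-1} = \delta_{n-1}$. Now I would choose integers $a, b$ with $a\delta_{n-1} + b d_n = \delta_n$ (Bézout), and form the $n\times n$ matrix
\[
M_n = \begin{pmatrix} & & & c_1 \\ & M_{n-1} & & \vdots \\ & & & c_{n-1} \\ e_1 & e_2 & \cdots & e_n \end{pmatrix}
\]
by appending a suitably chosen last row and last column. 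Concretely, I would take the last row to be $(0,\dots,0,d_n)$-type data and the last column to encode the Bézout coefficients, and then compute $\det M_n$ by cofactor expansion along the last row or column. The two contributing minors are $\det M_{n-1} = \delta_{n-1}$ (coming from the $d_n$ entry) and a minor whose value is $\pm d_n / \delta_{n-1} \cdot \delta_{n-1}$-type expression; arranging signs and the Bézout coefficients correctly makes the determinant collapse to $a\delta_{n-1} + b d_n = \delta_n$ (perhaps up to an overall sign that is absorbed into the convention for $\delta_n$). The first column of $M_n$ is inherited from $M_{n-1}$ with a $0$ appended, which is not quite $[d_1,\dots,d_n]^T$, so one must instead append the new row/column on the ``other side'' — i.e. keep $M_{n-1}$'s first column as column one and add the new row as row $n$ with $d_n$ as its last entry, or, more cleanly, put the new data so that column one becomes $[d_1,\dots,d_{n-1},d_n']^T$; I would organize the block so the first column is genuinely $[d_1,\dots,d_n]^T$, say by taking the last row to be $(0,\dots,0,1)$ scaled appropriately and redistributing.

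The main obstacle — really the only delicate point — is bookkeeping the signs and the exact placement of entries so that simultaneously (a) the first column is exactly $[d_1,\dots,d_n]^T$ on the nose, and (b) the determinant comes out to $\delta_n$ rather than $-\delta_n$ or $\pm d_n\delta_{n-1}/\delta_n$ times something. This is purely mechanical: one picks the augmenting row and column, expands the determinant by cofactors, and uses $\delta_n = \gcd(\delta_{n-1},d_n)$ together with the Bézout identity. Since the statement is quoted verbatim from \cite{N}, p.\ 13, I would in fact simply cite that reference for the detailed construction and include only the brief inductive sketch above, as the paper evidently intends to use this purely as a tool (Lemma \ref{M}) for later lattice-basis-completion arguments.
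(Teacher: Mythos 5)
The paper does not actually prove this lemma---it is quoted, with citation, from Newman's \emph{Integral Matrices}---so there is no internal proof to compare against, and your inductive B\'ezout argument is precisely the standard argument from that source. Your sketch is sound; the one spot you leave vague (how to border $M_{n-1}$ so that the first column is exactly $[d_1,\dots,d_n]^T$ \emph{and} the determinant collapses) is settled by taking the last row to be $(d_n,0,\dots,0,a)$ and the entries of the last column above the corner to be $(b/\delta_{n-1})(d_1,\dots,d_{n-1})^T$ (integral since $\delta_{n-1}\mid d_i$), with $a\delta_{n-1}-b\,d_n=\delta_n$; subtracting $(b/\delta_{n-1})$ times column one from column $n$ then yields $\det M_n=a\delta_{n-1}-b\,d_n=\delta_n$, and your separate treatment of $d_1=\cdots=d_{n-1}=0$ covers the case $\delta_{n-1}=0$ where this division is unavailable.
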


In addition, we have the following:

\begin{lem} \label{first lem}
	Let $L$ be a rank $d$ positive definite integral lattice. 

	(i)  For a fixed $\alpha\in L$, if there exists a $\beta\in L^\circ$ such that $\langle \alpha,\beta\rangle =1$, then there exists a $\Z$-basis $B$ for $L$ containing $\alpha$. 

	(ii)  If $\langle \alpha,L\rangle\subset \langle \alpha,\alpha\rangle \Z$, then there exists a rank $d-1$ lattice $L'$ with $L$ having orthogonal decomposition
\[L= \Z\alpha\oplus L'.\]

	(iii)  $\langle \alpha, L^\circ\rangle \subset n\Z$ if and only if $\alpha\in nL.$
\end{lem}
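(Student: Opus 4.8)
\textbf{Proof proposal for Lemma \ref{first lem}.}

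The plan is to treat the three statements in turn, using Lemma \ref{M} and some elementary bilinear-form manipulations.

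For part (i), the idea is that the existence of $\beta\in L^\circ$ with $\langle\alpha,\beta\rangle=1$ means that $\alpha$ is not a proper multiple of any lattice vector (indeed if $\alpha=m\gamma$ with $m>1$ and $\gamma\in L$, then $\langle\alpha,\beta\rangle=m\langle\gamma,\beta\rangle\in m\Z$, a contradiction). So write $\alpha=\sum_{i=1}^d d_i\alpha_i$ in some $\Z$-basis $\{\alpha_i\}$; pairing with $\beta$ and using that $\langle\alpha_i,\beta\rangle\in\Z$ forces $\gcd(d_1,\dots,d_d)=1$. Then Lemma \ref{M} produces a matrix $M_d\in\mathrm{Gl}_d(\Z)$ (determinant $\pm1$) whose first column is $[d_1,\dots,d_d]^T$, and applying $M_d$ to the basis $\{\alpha_i\}$ yields a new $\Z$-basis of $L$ whose first element is exactly $\alpha$. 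I would be slightly careful about the sign of the determinant: Lemma \ref{M} gives determinant $\delta_d=1$ here, so $M_d$ is genuinely unimodular and we really do get a basis.

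For part (ii), assume $\langle\alpha,L\rangle\subset\langle\alpha,\alpha\rangle\Z$. Set $n=\langle\alpha,\alpha\rangle$ and note $\alpha/n\in L^\circ$ since $\langle\alpha/n,\gamma\rangle=\langle\alpha,\gamma\rangle/n\in\Z$ for all $\gamma\in L$ by hypothesis. Define the projection $\pi:\Q\otimes L\to\Q\otimes L$ by $\pi(\gamma)=\gamma-\frac{\langle\gamma,\alpha\rangle}{n}\alpha$, i.e. orthogonal projection onto $\alpha^\perp$. For $\gamma\in L$, the coefficient $\langle\gamma,\alpha\rangle/n$ lies in $\Z$ by hypothesis, so $\pi(\gamma)\in L$; thus $L'=\pi(L)=L\cap\alpha^\perp$ is a sublattice of $L$, and every $\gamma\in L$ decomposes uniquely as $\gamma=\frac{\langle\gamma,\alpha\rangle}{n}\alpha+\pi(\gamma)$ with integer coefficient on $\alpha$. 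Hence $L=\Z\alpha\oplus L'$, an orthogonal direct sum, and $L'$ has rank $d-1$ since $L'\otimes\Q=\alpha^\perp$.

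For part (iii), I would prove both directions. If $\alpha\in nL$, say $\alpha=n\gamma$ with $\gamma\in L$, then for any $\beta\in L^\circ$ we get $\langle\alpha,\beta\rangle=n\langle\gamma,\beta\rangle\in n\Z$. Conversely suppose $\langle\alpha,L^\circ\rangle\subset n\Z$. Pick a $\Z$-basis $\{\beta_i\}$ of $L^\circ$ dual to a $\Z$-basis $\{\alpha_i\}$ of $L$ in the sense that $\langle\alpha_i,\beta_j\rangle=\delta_{ij}$ (such a dual basis exists because the form is nondegenerate and $L^\circ$ is precisely the dual lattice). Writing $\alpha=\sum_i c_i\alpha_i$ with $c_i\in\Z$, we get $c_j=\langle\alpha,\beta_j\rangle\in n\Z$ for each $j$, so every $c_j$ is divisible by $n$ and hence $\alpha\in nL$.

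The main obstacle I anticipate is part (i): making sure the gcd condition is both necessary and genuinely delivered by the pairing with $\beta$, and that Lemma \ref{M} is invoked with the right hypotheses (the $d_i$ not all zero — which holds since $\alpha\ne0$, as $\langle\alpha,\beta\rangle=1\ne0$) so that the resulting change-of-basis matrix is unimodular rather than merely of nonzero determinant. The other two parts are routine linear algebra over $\Z$ once the right projection/dual-basis setup is in place.
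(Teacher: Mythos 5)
Your proposal is correct and follows essentially the same route as the paper: the gcd argument combined with Lemma \ref{M} for part (i), the orthogonal projection $\gamma\mapsto\gamma-\frac{\langle\gamma,\alpha\rangle}{\langle\alpha,\alpha\rangle}\alpha$ (with integer coefficients by hypothesis) for part (ii), and dual-basis coefficient extraction for part (iii). The only cosmetic difference is in (ii), where the paper first invokes (i) to get a $\Z$-basis containing $\alpha$ and then projects the remaining basis vectors, whereas you apply the projection to all of $L$ directly; the underlying idea is identical, and your handling of the unimodularity of the matrix from Lemma \ref{M} is, if anything, slightly more explicit than the paper's.
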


\begin{proof} For {\it (i)}, pick an ordered $\Z$-basis $B=\{\alpha_1,\dots,\alpha_d\}$ for $L$. Let $\alpha\in L$ with $\alpha=\sum_{i=1}^d a_i \alpha_i$ for $\alpha_i\in B$ and $a_i\in \Z$. If there exists a $\beta\in L^\circ$ such that $\langle \alpha,\beta\rangle =1$, then writing $\beta=\sum_{i=1}^d b_i \alpha_i^\circ$ for $b_i\in \Z$ and where $\{\alpha_1^\circ,\dots,\alpha_d^\circ \}$ denotes the dual basis to $B$, we have that $\sum_{i=1}^d a_i b_i=1$, implying  gcd$(a_1,\dots, a_d)=1$.  Conversely, if gcd$(a_1,\dots, a_d)=1$, then there exists $b_1,\dots b_d\in\Z$ such that $\sum_{i=1}^d a_i b_i=1$, and setting $\sum_{i=1}^d b_i \alpha_i^\circ$ gives a $\beta\in L^\circ$ such that $\langle \alpha,\beta\rangle =1$. Since gcd$(a_1,\dots, a_d)=1$,  by Lemma \ref{M} there is a matrix $P\in \mbox{GL}_d(\Z)$ with $[a_1,\dots,a_d]$ as one of its columns. 
Then $P^{-1}\in\mbox{GL}_d(\Z)$ gives a change of $\Z$-basis from the $\Z$-basis $B$ to a new $\Z$-basis $B'$ with $\alpha\in B'$.  

Now assume $\langle \alpha,L\rangle\subset \langle \alpha,\alpha\rangle \Z$ as in {\it (ii)}.  Since $\alpha/\langle\alpha,\alpha\rangle \in L^\circ$ and $\langle\alpha, \alpha/\langle\alpha,\alpha\rangle\rangle=1$, it follows from {\it(i)} that there exists a $\Z$-basis $B$ that contains $\alpha$.  Construct a new $\Z$-basis by replacing $\alpha_i\in B$ with $\hat{\alpha_i}=\alpha_i- (\langle\alpha,\alpha_i\rangle/\langle\alpha,\alpha\rangle)\alpha$ when $\alpha_i\neq \alpha$. Then $\langle\alpha,\hat{\beta}\rangle=0$ and $\hat{B}$ give a $\Z$-basis for $L'$.
 
For {\it(iii)}, suppose $\alpha\in n L$. Then using the $\Z$-basis $B$ in the proof of part {\it(i)} above, we can again write $\alpha=\sum_{i=1}^d a_i \alpha_i$ with each $a_i\in n \Z$ and so $\langle \alpha, \alpha_i^\circ\rangle
\in n\Z$, whence  $\langle \alpha, L^\circ\rangle \subset n\Z$.  Conversely, suppose  $\langle \alpha, L^\circ\rangle \subset n\Z$. Then writing $\alpha=\sum_{i=1}^d a_i \alpha_i$ we have $\langle \alpha,\alpha_i^\circ\rangle\in n\Z$, so that $a_i\in n\Z$ for each $i$. Hence $\alpha\in nL$.   
\end{proof}

We now recall the construction of the bosonic vertex operator algebra $M(1)$ and the lattice vertex operator superalgebra $V_L$ (see e.g. \cite{LL}). Starting with a positive definite integral lattice $L$, let $\h= L\otimes_{\Z} \C$ and let $\hat{{\h}}={\C}[t,t^{-1}]\otimes {\h} \oplus {\C}c$ be the affinization of
${\h}.$
Set
$\hat{{\h}}^{+}=t{\C}[t]\otimes
{\h};\;\;\hat{{\h}}^{-}=t^{-1}{\C}[t^{-1}]\otimes {\h}.$
Then $\hat{{\h}}^{+}$ and $\hat{{\h}}^{-}$ are abelian subalgebras
of $\hat{{\h}}$. Let $U(\hat{{\h}}^{-})=S(\hat{{\h}}^{-})$ be the
universal enveloping algebra of $\hat{{\h}}^{-}$. Let ${\l} \in\C$. Consider the induced $\hat{{\h}}$-module
\begin{eqnarray*}
	M(1,{\l})=U(\hat{{\h}})\otimes _{U({\C}[t]\otimes {\h}\oplus
		{\C}k)}{\C}_{\l}\simeq
	S(\hat{{\h}}^{-})\;\;\mbox{(linearly)},\end{eqnarray*} where
$t{\C}[t]\otimes {\h}$ acts trivially on ${\C}_{\l}$,
${\h}$ acts as $\la h, {\l} \ra$ for $h \in {\h}$
and $k$ acts on ${\C}_{\l}$ as multiplication by 1. We shall write
$M(1)$ for $M(1,0)$.
For $h\in {\h}$ and $n \in {\Z}$ write $h(n) =  t^{n} \otimes h$. Set
$
h(z)=\sum _{n\in {\Z}}h(n)z^{-n-1}.
$

We can choose a Virasoro element of the form 
\begin{equation} \label{virasora grading} \omega_\gamma \ = \ \sum_{i=1}^d \frac{h_i(-1)^2}{2}{\bf 1}+\gamma(-2){\bf 1} \  = \ \omega_{st}+ \gamma(-2){\bf 1},
\end{equation}
where $\{h_i\}_{i=1}^d$ is an orthonormal basis for $\h$ and $\gamma\in \h$, and $\omega_{st}$ denotes that this is the standard Virasoro element often used in the literature.
Then $(M(1),\omega_\gamma)$ is a vertex operator algebra which is generated by the fields
$h(z)$ for $h \in {\h}$, and $M(1,{\l})$, for $\l \in \C$, are
irreducible modules for $M(1)$.

For $L$ a positive definite integral lattice, we shall denote by $V_L$ the vector space, \[V_{L}=M(1)\otimes\mathbb{C}[L],\]
where $\mathbb{C}[L]$ is the group algebra of $L$ with generators $e^{\alpha}$ for $\alpha\in L$. Then $V_L=(V_L,Y,\omega_{\gamma},{\bf 1})$ for $\gamma$ such that $\gamma\in \bar{L}^\circ\subset \h$ can be given the structure
of a vertex operator algebra,  with vacuum vector given by
\[{\bf 1}=1 \otimes e^0\]
and necessarily $\gamma\in \bar{L}^\circ$ in (\ref{virasora grading}) with $\bar{L}^\circ$ defined in (\ref{extended dual}).  The grading is given by 
\begin{align} \label{grading}
(\omega_\gamma)_1 (m_1\otimes e^{\alpha})=L_\gamma(0) (m_1\otimes e^{\alpha})= r+\langle \alpha,\alpha\rangle/2- \langle\gamma,\alpha\rangle
\end{align} 
for $m_1\in M(1)_r$ and $\alpha\in L^\circ$. For more information on the full construction of $V_L$, see for instance (\cite{D},\cite{DL},\cite{FLM},\cite{LL})).
Moreover,
\[V_{L^\circ}=M(1) \otimes\mathbb{C}[L^\circ ],\]
has the structure of an abelian intertwining algebra of level $T$ with $G=L^\circ/L$ and $(\cdot,\cdot)$ is naturally determined by setting $(\cdot,\cdot)=\langle\cdot,\cdot \rangle \mbox{ mod }2\Z$. Furthermore, $V_L$ is a vertex operator subsuperalgebra of $V_{L^\circ}$. (See \cite{DL}).

It is known that the vertex operator superalgebra $V_L$ is rational, in
the sense that it has finitely many irreducible $V_L$-modules and
that every $V_L$-module is completely reducible (see \cite{LL} e.g.). In fact,
\[V_{L+\lambda}=M(1) \otimes e^{\lambda} \mathbb{C}[L], \ \ \ \lambda\in L^\circ/L,\]
are, up
to equivalence, all irreducible $V_L$-modules. For $\lambda=0$ we
recover the vertex algebra $V_L$, while $V_{L^\circ}$ has a
generalized vertex operator algebra structure. The Virasoro
algebra acts on $V_{L+\lambda}$ with the central charge
$$\textbf{c}=d-12 \langle \gamma,\gamma\rangle .$$
The Virasoro generators will be denoted by $L_\gamma(n)$, $n \in \mathbb{Z}$.

Define the Schur polynomials $S_{r}(x_{1},x_{2},\dots)$
in variables $x_{1},x_{2},\dots$ by the following equation:
\begin{eqnarray}\label{eschurd}
\exp \left(\sum_{n= 1}^{\infty}\frac{x_{n}}{n}y^{n}\right)
=\sum_{r=0}^{\infty}S_{r}(x_1,x_2,\dots)y^{r}.
\end{eqnarray}
For any monomial $x_{1}^{n_{1}}x_{2}^{n_{2}}\cdots x_{r}^{n_{r}}$
we have an element $$h(-1)^{n_{1}}h(-2)^{n_{2}}\cdots
h(-r)^{n_{r}}{\1} $$ in $M(1)$   for $h\in{\h}.$
Then for any polynomial $f(x_{1},x_{2}, \dots)$, it follows that $f(h(-1),
h(-2),\dots){\1}$ is a well-defined element in $M(1)$. In
particular, $S_r (h)=S_{r}(h(-1),h(-2),\dots){\1} \in M(1)$ for $r \in
{\N}$. Extend $S_r (h)$ to $r\in\Q$ by setting $S_r (h)=0$ for $r\notin \N$. Fix $T\in \Z_+$ such that $T\cdot \langle\cdot ,\cdot \rangle$ is $\Z$-valued on $L^\circ$. Then for $\alpha,\delta\in L^\circ$, we have that 
\bea\label{eab} && e ^{\alpha}
_{i-1} e ^{\delta} =S_{-\langle\alpha,\delta\rangle-i}(\alpha) e ^{\alpha + \delta}, \eea
for $i\in\frac{1}{T}\Z$.

Next we study screening pairs inside $V_{L^\circ}$ for Virasoro elements of the form $\omega_\gamma$ for various $\gamma\in \bar{L}^\circ$.  We will focus on screening operators coming from weight $1$ primary vectors of the form $m\otimes e^{\alpha }$ for $m\in M(1)$ and $\alpha\in L^\circ$, since these are the most tractable. The following theorem describes precisely what such screening pairs must look like:

\begin{thm} \label{thm for screening pairs} 
Every screening pair with screening operators comprised of weight $1$ primary vectors of the form $m\otimes e^{\alpha }$ for $m\in M(1)$ and $\alpha\in L^\circ$ is of one of the four forms:\\


	(i) $(e^{-\alpha/p}_0,e^{\alpha/p'}_0)$ with $\alpha\in L^{(0)}$, satisfying $\langle \alpha,\gamma\rangle = p-p'$ and $\langle \alpha,\alpha\rangle = 2pp'$ for positive integers $p,p'$, and $\alpha/p,\alpha/p'\in L^\circ$. \\

	(ii) $(e^{-\alpha/p}_0,(\beta(-1)e^{(-p'+p)\alpha/pp'})_0)$ with $\alpha\in L^{(0)}$, $\beta\in \h$ not in the span of $\alpha$, $\langle \alpha,\gamma\rangle = p-p'$ and $\langle \alpha,\alpha\rangle = 2pp'$, where $\alpha/p,$ $(-p'+p)\alpha/pp' \in L^\circ$, and satisfying $\langle\beta,(-p'+p)\alpha/pp'-2\gamma\rangle=0$, and $p>p'$ for positive integers $p,p'$, and $p\neq 2p'$. \\

	(iii) $((\beta(-1)e^{-\alpha/p})_0,(e^{(-p'+r)\alpha/2p p' })_0)$ with $\alpha\in L^{(0)}$, $\beta\in \h$ not in the span of $\alpha$, $\langle \alpha,\gamma\rangle =-p' $ and $\langle \alpha,\alpha\rangle = 2pp'$ with $p=(r^2-p'^2)/4p'\in\Z_+$ for suitable $r>p'$ such that $r\neq 3p'$, and satisfying $\alpha/p, (-p'+r)\alpha/2p p' \in L^\circ$, and $\langle\beta,\alpha/p+2\gamma\rangle=0$.\\

	(iv)  $((m_1\otimes e^{-\alpha/p})_0, e^{s \alpha}_0)$ and $( e^{-\alpha/p}_0, (m_2\otimes e^{s' \alpha})_0)$ for certain $m_1\in M(1)_{r_1}$, $m_2\in M(1)_{r_2}$, and $r_1,r_2>1$, satisfying $s\alpha,s'\alpha,\alpha/p,\alpha/p'\in L^\circ$, and only certain values of $p,p'$ and $\langle\gamma,\alpha\rangle= p(1-r_1)-p'$.\\
	
	And we note that above, $\gamma\in L^\circ$ selects the Virasoro element $\omega_\gamma$ for $V_L$. 
\end{thm}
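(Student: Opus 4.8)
The plan is to reduce the statement to computations in $V_{L^\circ}$ using the explicit formula (\ref{eab}) for products $e^\alpha_{i-1}e^\delta$ together with the grading formula (\ref{grading}). First I would determine when a vector of the form $m\otimes e^\alpha$ with $m\in M(1)$ and $\alpha\in L^\circ$ is a weight $1$ primary vector for $(\tilde V,\omega_\gamma)$. Writing $m\in M(1)_r$, equation (\ref{grading}) forces $r+\langle\alpha,\alpha\rangle/2-\langle\gamma,\alpha\rangle=1$; the primary condition $L_\gamma(n)(m\otimes e^\alpha)=0$ for $n>0$ must then be unpacked using $\omega_\gamma=\omega_{st}+\gamma(-2)\mathbf 1$, which gives a system of constraints on $m$ and $\alpha$. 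I expect that $r=0$ forces $m$ to be a scalar and yields the ``$e^\alpha$'' case, $r=1$ forces $m=\beta(-1)\mathbf 1$ with a linear orthogonality condition on $\beta$ (coming from $L_\gamma(1)(\beta(-1)e^\alpha)=0$, i.e. a condition of the shape $\langle\beta,\alpha-2\gamma\rangle=0$ up to signs/scalings), and $r\geq 2$ is the generic ``$m_i$'' case in (iv). This classifies the building blocks.

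Next I would impose the screening-pair conditions of Definition \ref{screening pair definition} on a pair $(v_1,v_2)$ of such vectors. Condition $(ii)$ says $v_i$ has a finite period, which via (\ref{eab}) means that some iterated product $(v_1)_{n_1}\cdots(v_1)_{n_q}\mathbf 1$ lies in $V_L$ and, paired against the analogous product for $v_2$, produces a nonzero multiple of $\mathbf 1$. Using (\ref{eab}) repeatedly, $(e^{\alpha})_{n_1}\cdots(e^{\alpha})_{n_q}\mathbf 1$ is (a Schur polynomial times) $e^{q\alpha}$, so the requirement that the $e^\alpha$-part of $v_1$ have exponent $-\alpha/p$ and that of $v_2$ have exponent proportional to $\alpha$ comes from the fact that the only way a product of these can return to $\mathbf 1=1\otimes e^0$ is for the lattice exponents to cancel: one factor must be a negative rational multiple $-\alpha/p$ of a lattice vector and the other a complementary rational multiple. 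Tracking the exact exponents and the $\Z$-integrality constraints (the ``$n_1+\cdots+n_q\in\Z$'' and the $\alpha/p,\alpha/p'\in L^\circ$ conditions) then pins down the relations $\langle\alpha,\alpha\rangle=2pp'$ and the displayed relation between $\langle\alpha,\gamma\rangle$ and $p,p'$ (and their variants with $r$ in case (iii)): these are exactly the conditions that make the relevant Schur polynomial coefficient $S_{-\langle\cdot,\cdot\rangle-i}$ nonvanishing and that make the relevant mode land in degree $0$, weight $0$.

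Then I would handle the four cases by bookkeeping on which of $v_1,v_2$ carries a nontrivial $M(1)$-factor: (i) both are pure exponentials; (ii) $v_1$ is a pure exponential and $v_2$ carries a degree-one factor $\beta(-1)\mathbf 1$; (iii) $v_1$ carries the degree-one factor and $v_2$ is a pure exponential but now with a more flexible exponent parametrized by $r$ (the asymmetry forcing $\langle\alpha,\gamma\rangle=-p'$ rather than $p-p'$, because the weight-one condition on $\beta(-1)e^{-\alpha/p}$ shifts the grading bookkeeping by one); (iv) at least one factor has $M(1)$-degree $\geq 2$, which by the weight-one constraint is only possible in the two ``mixed'' configurations listed, with $\langle\gamma,\alpha\rangle=p(1-r_1)-p'$. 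In each case the exclusions ($p\neq 2p'$ in (ii), $r\neq 3p'$ in (iii)) arise precisely because at those values $v_2$ would itself reduce to one of the earlier, already-enumerated forms (e.g. its $M(1)$-factor degenerates or $\beta$ becomes forced into the span of $\alpha$, contradicting the hypothesis), so they are removed to avoid double-counting. Finally, the commutativity condition $(i)$ of Definition \ref{screening pair definition}, $[(v_1)_0,(v_2)_0]=0$, is checked from the generalized Jacobi identity (\ref{egjacobi0}): the commutator is a sum of residues of $Y(Y(v_1,x_0)v_2,x_2)$ terms, and using (\ref{eab}) the product $Y(v_1,x_0)v_2$ has only nonnegative integer powers of $x_0$ exactly when $\langle$ (exponent of $v_1$), (exponent of $v_2$) $\rangle\in\Z_{\geq 0}$, i.e. under the stated $\langle\alpha,\alpha\rangle=2pp'$-type relations, making the residue vanish.

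The main obstacle I anticipate is the weight-one primary analysis for $M(1)$-degree $\geq 2$ in case (iv): showing that the primary condition together with $\mathrm{wt}=1$ is so restrictive that only the two listed mixed configurations survive, and extracting ``only certain values of $p,p'$,'' requires carefully solving the Virasoro singular-vector equations in $M(1)\otimes\C[L^\circ]$ rather than the clean low-degree cases. A secondary subtlety is keeping the rescaling conventions straight — the factors of $2$ and the distinction between $L^\circ$ and $\bar L^\circ$ — so that the integrality conditions $\alpha/p,\alpha/p'\in L^\circ$ and the parity of $\alpha$ (i.e. $\alpha\in L^{(0)}$) come out exactly as stated.
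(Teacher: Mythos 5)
Your overall skeleton does track the paper's: Definition \ref{screening pair definition}$(ii)$ together with (\ref{eab}) forces the two lattice exponents to lie on a common line through some $\alpha$ with opposite signs, the weight-one condition (\ref{grading}) on each factor, organized by the $M(1)$-degrees $r_1,r_2$, yields a quadratic Diophantine constraint whose case analysis ($r_1=r_2=0$; $r_1=0,r_2=1$; $r_1=1,r_2=0$; degree $\geq 2$) produces the four families, and the primary condition at degree one gives the orthogonality constraints on $\beta$. (You leave the $\alpha=0$ exclusion and the reduction from $\alpha\in L^{(1)}$ to $\alpha\in L^{(0)}$ by doubling unaddressed, but those are minor.)

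The genuine gap is in your treatment of condition $(i)$ of Definition \ref{screening pair definition}, and consequently in where the exclusions $p\neq 2p'$, $r\neq 3p'$ and $\beta\notin\mathrm{Span}_\C\{\alpha\}$ come from. Your vanishing criterion for $[(v_1)_0,(v_2)_0]=((v_1)_0v_2)_0$ --- that $Y(v_1,x_0)v_2$ has only nonnegative integer powers of $x_0$ because the pairing of the exponents is a nonnegative integer --- is false in precisely the cases at hand: in case (i) the pairing is $\langle -\alpha/p,\alpha/p'\rangle=-2$, and by (\ref{eab}) the zeroth product is $-\tfrac{1}{p}\,\alpha(-1)e^{(p-p')\alpha/pp'}\neq 0$; the commutator vanishes only because this is a multiple of $L_\gamma(-1)e^{(p-p')\alpha/pp'}$, whose zero mode is zero. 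In cases (ii) and (iii) the zeroth products involve Schur polynomials $S_{2(p-p')/p}$ and $S_{(r-p')/p}$, and commutativity genuinely fails exactly when $2(p-p')/p\in\N$ forces $p=2p'$ (respectively when $(r-p')/p=4p'/(r+p')\in\N$ forces $r=3p'$), unless $\beta\in\mathrm{Span}_\C\{\alpha\}$, in which case the second vector's zero mode is itself zero, again via the $L_\gamma(-1)$-derivative property. So the exclusions are not, as you assert, a device to avoid double-counting degenerate configurations; they are forced by requiring $[(v_1)_0,(v_2)_0]=0$, and your argument as written would neither verify condition $(i)$ for the listed pairs nor produce the constraints $p\neq 2p'$, $r\neq 3p'$, $\beta\notin\mathrm{Span}_\C\{\alpha\}$ at all. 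To close the gap you must compute $((v_1)_0v_2)_0$ explicitly via (\ref{eab}) and the derivation property of zero modes, and determine exactly when the resulting Schur-polynomial expressions have vanishing zero mode, as the paper does.
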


\begin{proof}
First note that condition $(ii)$ in Definition \ref{screening pair definition} and equation (\ref{eab}) imply that screening pairs with screening operators coming from weight $1$ primary vectors of the form $m\otimes e^{\alpha }$ are of the form $((m_1\otimes e^{-s\alpha })_0, (m_2\otimes e^{s'\alpha })_0)$ for some $s,s'\geq 0$, with $s\alpha,\,s'\alpha\in L^\circ$, and $m_1,m_2\in M(1)$.

If $\alpha=0$, $h_1=\beta_1(-1)\mathbf{1}$, and $h_2=\beta_2(-1)\mathbf{1}$, with $\beta_1 \beta_2\in \h$, then $(h_1)_0$ and $(h_2)_0$ are nilpotent only if $\beta_1= \beta_2 = 0$ and thus there are no screening pairs of this form.  

 Now assume $\alpha\neq 0$. Without loss of generality, we can assume the screening operators are of the form $(m_1\otimes e^{-\alpha/p })_0$ for $\alpha\in L$ satisfying $\alpha/p\in L^\circ$ with $p\in \Z_+$ and $m_1\in M(1)_{r_1}$. (For instance, replacing $\alpha$ with $-\alpha$, we can assume $p\in \Z_+$). For now assume that $\alpha\in L^{(0)}$ and write $\langle \alpha,\alpha\rangle = p q$ for $q\in \Z_+$. For $(m_1\otimes e^{-\alpha/p })_0$ to be a screening operator arising from a weight 1 primary vector as in Definition \ref{screening pair definition}, we need
\[  \mbox{wt} (m_1\otimes e^{-\alpha/p }) \ = \ r_1+\frac{q}{2p}+\frac{1}{p}\langle \alpha,\gamma \rangle \ = \ 1, \] 
 where in the first equality we used (\ref{grading}), giving the restriction $\langle\gamma,\alpha\rangle= p-q/2-p r_1$ for $\omega_\gamma$.  Recall that $\gamma\in \bar{L}^\circ$, i.e. $\gamma$ is in the extended dual space (\ref{extended dual}), and so $q$ must be even for $\alpha\in L^{(0)}$. Set $q=2p'$ for $p'\in \Z_+$, and then $\langle\alpha,\alpha\rangle =2 p p'$. If instead $\alpha\in L^{(1)}$ the rest of the proof follows by replacing $p'$ with $p'/2$, as we show below. 
 
 In order to have a screening pair, we need an additional element $m_2\otimes e^{m \alpha /2p p' }$ with $m \alpha/ 2p p' \in L^\circ$, and $m_2\in M(1)_{r_2}$ such that $\mbox{wt} (m_2\otimes e^{m\alpha /2p p' })=1$. Note that condition $(ii)$ in Definition \ref{screening pair definition} restricts $m$ to be positive. Using the requirement that $\langle \gamma,\alpha\rangle= p(1-r_1)-p'$, we have the condition
 \begin{align*} 
 1=\mbox{wt} (m_2 \otimes e^{\frac{m}{2p p' }\alpha}) &=r_2+\frac{m^2}{4p p' }- \frac{m}{2p p' } (p-p'-p r_1) \\ &= r_2 + \frac{m}{2p p' }\left(\frac{m}{2} +p(r_1-1)+p'\right).
\end{align*}   Multiplying both sides by $4p p'$ and rearranging terms gives 
\begin{align} \label{quadratic}  m^2+ 2m (p(r_1-1)+p')+4p p'(r_2-1)=0.
\end{align} 

We will discuss five possible cases below which result in determining when $m_1\otimes e^{-\alpha/p }$ and $m_2 \otimes e^{m \alpha /2p p' }$ are weight $1$ vectors.

{\it Case 1,} when $r_1,r_2 >0$.  In this case,  the conditions $p,p',m,r_1,r_2\in \Z_+$ imply that the left hand side of the equation $(m/2p p' )(m/2 +p(r_1-1)+p')=1-r_2$ is strictly greater then zero while the right hand side is less then or equal to zero.  Thus no solutions exist.

{\it Case 2,}  when $r_1=0$ and $r_2= 0$.  Viewing (\ref{quadratic}) as a quadratic polynomial in $m$, 
the discriminant is $4(p'-p)^2+16p p'=4(p+p')^2$, and so $m=2p$, leading to weight $1$ vectors as in {\it(i)}.

{\it Case 3,} when $r_1= 0 $ and $r_2=1$. 
 Viewing (\ref{quadratic}) as a quadratic polynomial in $m$, the discriminant is $4(p-p')^2$ and so $m= 2(p-p')$ with $p>p'$, leading to weight $1$ vectors as in {\it(ii)}.

{\it Case 4,} when $r_1= 1 $ and $r_2=0 $.  The discriminant $D$ satisfies  $D/4=p'^2 + 4p p'$. Since this is linear in $p$, all solutions are obtained by setting $p= (r^2-p'^2)/4p'$ for any $r\in \Z_+$ such that $(r^2-p'^2)/4p'$ is a positive integer and $r> p'$. Then $m= -p'+ r$, leading to weight $1$ vectors as in ${\it(iii)}$.

{\it Case 5,}  when $r_1=0,\, r_2>1 $ or  $r_1> 0,\,r_2=0 $.
 If $r_1=0,\, r_2>1 $, we have that $D/4= (p'-p)^2 + 4p p'(1-r_2)$, and if $r_1> 0,\,r_2=0 $, we have that $D/4= ((r_1-1)p+p')^2 + 4p p'$.  These cases only give a perfect square for certain values of $p,p'$, and amount to solving the quadratic Diophantine equation $D/4= r^2$ for some $r\in \Z$, leading to weight $1$ vectors as in $(iv)$.

Now let us return to the assertion that it is enough to assume $\alpha\in L^{(0)}$. Suppose not, and consider screening operators of the form $(m_1\otimes e^{-\alpha/q })_0$ for $\alpha\in L$ satisfying $\alpha/q\in L^\circ$ with $q\in 
\Z_+$ and $m_1\in M(1)_{r_1}$. Then it follows that $\langle\alpha,\alpha\rangle= q q'$ for some $q'\in \Z_+$. Then for $(m_1\otimes e^{-\alpha/q })_0$ to be a screening operator, we need wt$(m_1\otimes e^{-\alpha/q })_0= r_1+ q' /2q- \langle\gamma,\alpha/q\rangle=1$, giving the restriction $\langle\gamma,\alpha\rangle= q-q'/2-q r_1\notin \Z$ for defining  $\omega_\gamma$.
 Let $\alpha'=2\alpha$, $p=2q$, and $p'=q'$. Then $\langle\alpha',\alpha'\rangle= 2 p p'$, $\alpha'\in L^{(0)}$, and $\alpha'/p\in L^\circ$.  Then we can just view screening pairs involving $\alpha$ as screening pairs involving $\alpha'$.

Next we verify that the pairs in cases $(i)-(iii)$ of the proposition give primary vectors. 
Case $(i)$ in the proposition is clear.  For case $(ii)$, using the fact that  $[L_\gamma(m),\beta(n)]= -n\beta(m+n)- m(m+1)\langle\beta,\gamma\rangle \delta_{m,-n}$, we have that 
\begin{align*} L_\gamma(1) \beta(-1)e^{(-p'+p)\alpha/pp'}& = (\beta(0)-2\langle \beta,\gamma\rangle)e^{(-p'+p)\alpha/pp'}\\
&= \langle \beta,(-p'+p)\alpha/pp'- 2\gamma\rangle e^{(-p'+p)\alpha/pp'},
\end{align*} so we need that $\langle \beta,(-p'+p)\alpha/pp'- 2\gamma\rangle=0$. Similarly, for case $(iii)$, we have that 
$L_\gamma(1)\beta(-1)e^{-\alpha/p}=\langle \beta,-\alpha/p-2\gamma\rangle^{-\alpha/p}$, so we need that $\langle \beta,-\alpha/p-2\gamma\rangle=0$.
   
Finally, we need to verify that the three classes of pairs of screening operators $(i)-(iii)$ in the proposition do indeed satisfy the first condition in Definition \ref{screening pair definition} of screening pair.  For {\it(i)} of the proposition, first recall that $(L(-1)v)_0= 0$ for $v\in \tilde{V}$. The Jacobi identity (\ref{egjacobi0}) and $L_\gamma(-1)$-derivative property with $L_\gamma(-1)e^{\alpha}=\alpha(-1)e^{\alpha}$ gives
\begin{align*} 
[ e^{-\alpha/p }_0, e^{\alpha/p' }_0] &= (e^{-\alpha/p  }_0 e^{\alpha/p' })_0 \ = \ \left(\frac{-\alpha(-1)}{p}e^{(p-p')\alpha/pp' }\right)_0 \\ 
&= \left(\frac{-p'}{p-p'}  L_\gamma(-1)e^{(p-p')\alpha/pp' }\right)_0 \ = \ 0.
\end{align*}  
This verifies that  $(e^{-\alpha/p }_0, e^{\alpha/p' }_0)$ is a screening pair. 
 
Moreover, since $\langle (-p'+p)\alpha/pp', -\alpha/p\rangle= -2(-p'+p)/p$, and using the fact that $e_0^{-\alpha/p}$ is a derivation, as operators on  $V_L$ we have that  
\begin{align} \label{4.8}
[e^{-\alpha/p}_0 &,(\beta(-1) e^{(-p'+p)\alpha/pp'})_0]\\
& =\left(e^{-\alpha/p}_0(\beta(-1)e^{(-p'+p)\alpha/pp'})\right)_0 \nonumber\\
& =\left (((e^{-\alpha/p}_0\beta(-1)\mathbf{1})_{-1}+\beta(-1)e^{-\alpha/p}_0)e^{(-p'+p)\alpha/pp'}\right)_0\nonumber\\
&=\left((-\langle\beta,-\alpha/p\rangle e^{-\alpha/p}_{-1}+\beta(-1)e^{-\alpha/p}_0e^{(-p'+p)\alpha/pp'}\right)_0\nonumber\\
&=\left(\langle\beta,\alpha/p\rangle S_{2(-p'+p)/p}(-\alpha/p)e^{(p-2p')\alpha/p p'}\right)_0 \nonumber \\
&\quad +\left(\beta(-1)S_{2(-p'+p)/p-1}(-\alpha/p)e^{(p-2p')\alpha/p p'}\right)_0, \nonumber
\end{align} 
which is $0$ unless possibly when $2(p-p')/p \in \N$.  But recall $p>p'$ in this case, and so
 we can conclude that 
 \[  [(\beta(-1)e^{(-p'+p)\alpha/pp'})_0,  e^{-\alpha/p}_0]=0  \]
 unless possibly when $p=2p'$. In the case $p=2p'$ we have that (\ref{4.8}) is $-(\langle\alpha,\beta\rangle/p^2)\alpha(0)+\beta(0)$, which is nonzero unless $\beta\in \mbox{Span}_\C \{\alpha\}$. But then $(\beta(-1) e^{(-p'+p)\alpha/pp'})_0\in \mbox{Span}_{\C}\{ (L_\gamma(-1)e^{(-p'+p)\alpha/pp'})_0 \} = 0$. Thus it is necessary to have $\beta\notin \mbox{Span}_\C \{\alpha\}$, so that the second screener in case $(ii)$ of the proposition is nonzero, and then me must have $p\neq 2p$ to have condition $(i)$ in Definition \ref{screening pair definition} satisfied.

 Similarly, with $\langle (-p'+r)\alpha/2pp',-\alpha/p\rangle= (p'-r)/p$, we have that
 \begin{align} \label{4.9} 
 [&(e^{(-p'+r)\alpha/2p p'})_0 ,(\beta(-1)e^{-\alpha/p})_0]\\
& = ( -\langle \beta,(-p'+r)\alpha/2p p'\rangle  S_{(r-p')/p}((-p'+r)\alpha/2p p' ) \nonumber \\
&\quad +\beta(-1)S_{(r-p')/p-1}((-p'+r)\alpha/2p p') e^{(-3p'+r)\alpha/2p p'} )_0. \nonumber 
 \end{align} 
Note that $(r-p')/p= 4p'/(r+p')$, so we can conclude that 
 \begin{align*} 
 [&(e^{(-p'+r)\alpha/2p p' })_0 ,(\beta(-1)e^{-\alpha/p})_0]=0,
 \end{align*} 
 unless $r= 4p'/a-p'$ for some positive integer $a$ that divides $4p'$. Since $r>p'$, this can only happen when $r=3p'$, in which case (\ref{4.9}) is again $-(\langle\alpha,\beta\rangle/p^2)\alpha(0)+\beta(0)$. Thus again we only have a nonzero screener if  $\beta\notin \mbox{Span}_\C \{\alpha \}$ and also $r\neq 3p'$.
This verifies that condition $(i)$ in the definition of screening pair is satisfied for the pairs of type {\it(ii)} and {\it(iii)} in the proposition.
This completes the proof. 
\end{proof}

For the rest of this paper, we will focus on screening pairs of the form $(e^{-\alpha/p}_0,e^{\alpha/p'}_0)$ with $\langle\alpha,\alpha\rangle=2p p'$. Now let us apply the Theorem \ref{thm for screening pairs} to the case when $L$ is a rank one lattice $L$. First suppose $L=\sqrt{2 p p'}\Z$, so that $L$ is an even lattice. If we set $n=-2p'$, and $\gamma=  (p-p') \alpha /2pp'$, then $\omega_\gamma$ gives a central charge of $c_{p,p'}=1- 6 (p-p')^2/pp'$, and $(e^{-\alpha/ p}_0,e^{\alpha/p'}_0)$ is a screening pair of type $(p,p')$. In the case that $p'=1$, then $\mathcal{W}(p)=\mathcal{W}(p,1)= \mbox{ker} \, e^{-\alpha/p}_0$ is the kernel of the long screening operator. In the case that $p,p'>1$ and $p,p'$ are relatively prime, the vertex operator algebra $\mbox{ker} \, e^{-\alpha/p}_0\cap   \mbox{ker} \, e^{\alpha/p'}_0$ is denoted by $\mathcal{W}(p,p')$, see e.g. \cite{TW2}.

\section{Screening pairs of type $(p,p')$ for a rank $d$ lattice}

Our next goal is to give a characterization of elements in the lattice that can give rise to screening pairs in $V_L$.  Consider the screening pairs of type $(e^{-\alpha/p}_0,e^{\alpha/p'}_0)$ with $\alpha\in L^{(0)}$ and $\langle\alpha,\alpha\rangle=2p p'$. In this case it follows from Theorem \ref{thm for screening pairs} that both $\alpha/p, \alpha/p'\in L^\circ$.  When $p,p'$ are relatively prime, we have that $2\alpha/\langle \alpha,\alpha\rangle \in L^\circ$.  This leads to the following definition.

\begin{defn} \label{screener defn}
	The set of {\em screeners}  or {\em screening momenta} in $L$, denoted by $\Phi$, are defined as \\
	$$\Phi= \left\{\beta \in L^{(0)} \,\Big|\,  \beta \notin 2L \mbox{ and } \,\frac{2\beta}{\langle \beta,\beta\rangle}\in L^\circ \right\}.$$ \\
\end{defn} 

We will denote by $\Z\Phi=\mbox{Span}_\Z \Phi$ the lattice generated by the screeners.
We will see shortly that if $L$ has rank $d>1$ and is indecomposable then the condition $\beta \notin 2L$ is redundant in the definition above.
Notice that $\Phi$ is finite, and trivially contains all roots, i.e., those elements $\alpha\in L^{(0)}$ with $\langle\alpha,\alpha\rangle=2$. 

Before citing the main properties of screeners, we need the following lemma.

\begin{lem} \label{lem .2}
	If $\alpha\in L^{(0)}$, $\alpha\neq 0$, satisfies  $2\alpha/\langle \alpha,\alpha\rangle \in L^\circ$ for a lattice $L$ of rank $d>1$ such that $L$ does not have an orthogonal direct sum decomposition of the form $L=L_1\oplus \Z\gamma$ for some lattice $L_1$ of rank $d-1$ and $ \alpha\in \Z\gamma$, then $\alpha \notin 2L$, implying $\alpha$ is a screener. In addition, there exists a $\beta\in L$ such that $\langle 2\alpha/\langle\alpha,\alpha\rangle, \beta\rangle \in 2\Z+1$.
\end{lem}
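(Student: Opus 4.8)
The plan is to prove the contrapositive for the first assertion: suppose $\alpha \in L^{(0)}$, $\alpha \neq 0$, satisfies $2\alpha/\langle\alpha,\alpha\rangle \in L^\circ$, and suppose that $\alpha \in 2L$, say $\alpha = 2\mu$ for some $\mu \in L$. I will then show that $L$ must split off a rank-one orthogonal summand $\Z\gamma$ with $\alpha \in \Z\gamma$, contradicting the hypothesis. Writing $\langle\alpha,\alpha\rangle = 4\langle\mu,\mu\rangle$, the hypothesis $2\alpha/\langle\alpha,\alpha\rangle \in L^\circ$ becomes $\mu/\langle\mu,\mu\rangle \in L^\circ$, i.e.\ $\langle\mu, L\rangle \subset \langle\mu,\mu\rangle\Z$. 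By Lemma \ref{first lem}(ii), this is exactly the condition guaranteeing an orthogonal decomposition $L = \Z\mu \oplus L'$ with $L'$ of rank $d-1$. Since $\alpha = 2\mu \in \Z\mu$, setting $\gamma = \mu$ gives the forbidden decomposition, a contradiction. Hence $\alpha \notin 2L$, and combined with $2\alpha/\langle\alpha,\alpha\rangle \in L^\circ$ and $\alpha \in L^{(0)}$, this means $\alpha \in \Phi$ by Definition \ref{screener defn}.

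For the second assertion, I want to find $\beta \in L$ with $\langle 2\alpha/\langle\alpha,\alpha\rangle, \beta\rangle$ odd. Suppose for contradiction that $\langle 2\alpha/\langle\alpha,\alpha\rangle, \beta\rangle \in 2\Z$ for every $\beta \in L$; equivalently $\langle \alpha/\langle\alpha,\alpha\rangle \cdot 2, L\rangle \subset 2\Z$, which by Lemma \ref{first lem}(iii) (applied to the element $2\alpha/\langle\alpha,\alpha\rangle \in L^\circ$, with $n = 2$) forces $2\alpha/\langle\alpha,\alpha\rangle \in 2L$, i.e.\ $\alpha/\langle\alpha,\alpha\rangle \in L$. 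But then $\langle \alpha/\langle\alpha,\alpha\rangle, L\rangle \subset \Z$ trivially, and moreover $\alpha/\langle\alpha,\alpha\rangle$ is a lattice vector $\nu$ with $\langle\nu, \alpha\rangle = 1$; writing $\alpha = \langle\alpha,\alpha\rangle \nu$ shows $\langle\nu,\nu\rangle \langle\alpha,\alpha\rangle = \langle\nu,\alpha\rangle = 1$, so $\langle\alpha,\alpha\rangle = 1$, contradicting $\alpha \in L^{(0)}$ (which requires $\langle\alpha,\alpha\rangle$ even, in particular $\geq 2$). Alternatively, and perhaps more cleanly, I can argue directly: $\langle\nu,\nu\rangle \langle\alpha,\alpha\rangle = 1$ with both factors positive integers is already absurd since $\langle\alpha,\alpha\rangle \geq 2$.

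The main obstacle I anticipate is making sure the application of Lemma \ref{first lem}(iii) is set up with the correct element and the correct $n$: part (iii) says $\langle\delta, L^\circ\rangle \subset n\Z$ iff $\delta \in nL$, so to use it I need the roles of $L$ and $L^\circ$ as in its statement, and I should double-check the direction of the pairing condition (whether I am pairing against $L$ or against $L^\circ$) matches; here I only pair $2\alpha/\langle\alpha,\alpha\rangle$ against $L$, so I may instead want to invoke part (iii) in the form ``$\langle \delta, L \rangle \subset n\Z$ implies $\delta \in n L^\circ$'' — but $L \subset L^\circ$, so this is weaker than needed, and I should instead run the direct contradiction via $\langle\nu,\nu\rangle\langle\alpha,\alpha\rangle = 1$ described above, which avoids the subtlety entirely. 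I will phrase the second part using that direct argument. A secondary point to verify is that $\langle\alpha,\alpha\rangle \neq 0$ throughout (immediate from positive definiteness and $\alpha \neq 0$), so all the divisions are legitimate.
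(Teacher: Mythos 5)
Your argument for the first assertion is correct and is essentially the paper's own proof: write $\alpha=2\mu$, observe $\langle\mu,L\rangle\subset\langle\mu,\mu\rangle\Z$, and invoke Lemma \ref{first lem}(ii) to produce the forbidden splitting $L=L_1\oplus\Z\mu$.

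The second assertion, however, has a genuine gap. From ``$\langle 2\alpha/\langle\alpha,\alpha\rangle,\beta\rangle\in 2\Z$ for all $\beta\in L$'' you cannot conclude $2\alpha/\langle\alpha,\alpha\rangle\in 2L$: Lemma \ref{first lem}(iii) pairs an element of $L$ against $L^\circ$, whereas here you pair an element of $L^\circ$ against $L$, and the dualized statement only yields $2\alpha/\langle\alpha,\alpha\rangle\in 2L^\circ$, i.e.\ $\alpha/\langle\alpha,\alpha\rangle\in L^\circ$ --- exactly the point you yourself flag. But your proposed fix does not repair this: the ``direct contradiction'' $\langle\nu,\nu\rangle\langle\alpha,\alpha\rangle=1$ still presupposes $\nu=\alpha/\langle\alpha,\alpha\rangle\in L$ (otherwise $\langle\nu,\nu\rangle$ is just the rational number $1/\langle\alpha,\alpha\rangle$ and nothing is absurd), and membership of $\nu$ in $L$ was precisely what the invalid step was supposed to supply; in fact $\nu\in L$ never holds when $\langle\alpha,\alpha\rangle\geq 2$, so no argument can reach a contradiction along that route. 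A telltale sign is that your second-part argument never uses the hypothesis that $L$ admits no orthogonal decomposition $L=L_1\oplus\Z\gamma$ with $\alpha\in\Z\gamma$, yet the assertion is false without it: if $L=L_1\oplus\Z\alpha$ with $\langle\alpha,\alpha\rangle=2p$, then $\langle 2\alpha/\langle\alpha,\alpha\rangle,\beta_1+m\alpha\rangle=2m\in 2\Z$ for all $\beta_1\in L_1$, $m\in\Z$. The correct argument (the paper's) is the one you almost had: if no $\beta$ with odd pairing exists, then $\langle\alpha/\langle\alpha,\alpha\rangle,L\rangle\subset\Z$, i.e.\ $\langle\alpha,L\rangle\subset\langle\alpha,\alpha\rangle\Z$, and Lemma \ref{first lem}(ii) then forces the decomposition $L=L_1\oplus\Z\alpha$, contradicting the hypothesis of the lemma.
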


\begin{proof} Let $\alpha\in L$ satisfy $2\alpha/\langle \alpha,\alpha\rangle \in L^\circ$ and assume there is no orthogonal decomposition of the form $L=L_1\oplus \Z\gamma$ with $\alpha\in \Z\gamma$. Suppose $\alpha\in 2L$ and write $\alpha=2\beta$ for $\beta\in L$. Then rewriting the condition $2\alpha/ \langle \alpha,\alpha\rangle\in L^\circ$ gives
 \[ \langle 2\beta,L\rangle =  \langle \alpha,L\rangle\subset \frac{\langle \alpha,\alpha\rangle}{2} \Z =2\langle\beta,\beta\rangle \Z, \] 
 or $ \langle\beta,L\rangle\subset  \langle\beta,\beta\rangle \Z$. By Lemma \ref{first lem}{\it (ii)}, $L$ has an orthogonal decomposition of the form $L=L_1\oplus \Z\beta$ with $\alpha\in \Z\beta$, a contradiction. Thus $\alpha\notin 2L$. 
 
  Now suppose there is no $\beta\in L$ such that  $\langle 2\alpha/\langle\alpha,\alpha\rangle , \beta\rangle \in 2\Z+1$. Then $\langle \alpha/\langle\alpha,\alpha\rangle , L\rangle \in \Z$ and again by Lemma \ref{first lem}{\it (ii)}, $L=L_1\oplus \Z\alpha$, a contradiction. 
\end{proof}

\begin{lem} \label{Z-base with screener}
	Every screening pair of type $(p,p')$ of the form $(e^{-\alpha/p}_0,e^{\alpha/p'}_0)$ with $p,p'$ relatively prime and $\langle\alpha,\alpha\rangle=2pp'$ gives rise to a screener $\alpha$. Conversely, every screener $\alpha$ with $\langle\alpha,\alpha\rangle=2pp'$ ($p,p'$ not necessarily relatively prime) gives rise to a screening pair $(e^{-\alpha/p}_0,e^{\alpha/p'}_0)$ of type $(p,p')$ with Virasoro element $\omega_\gamma$ where $\gamma\in L^\circ$ satisfies $\langle\gamma,\alpha \rangle= p-p'$. Moreover, there exists a $\Z$-basis containing $\alpha$.
\end{lem}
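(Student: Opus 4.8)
The plan is to prove Lemma \ref{Z-base with screener} in three parts, following the three assertions in its statement.

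\medskip

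\textbf{Part 1: A screening pair gives a screener.} Suppose $(e^{-\alpha/p}_0,e^{\alpha/p'}_0)$ is a screening pair of type $(p,p')$ with $p,p'$ relatively prime and $\langle\alpha,\alpha\rangle=2pp'$. By Theorem \ref{thm for screening pairs}(i) we already know $\alpha\in L^{(0)}$ and $\alpha/p,\alpha/p'\in L^\circ$. First I would observe that since $\gcd(p,p')=1$, there exist integers $a,b$ with $ap'+bp=1$, so $2\alpha/\langle\alpha,\alpha\rangle=\alpha/pp'=a(\alpha/p)+b(\alpha/p')\in L^\circ$. It remains to check $\alpha\notin 2L$. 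Here I would invoke Lemma \ref{lem .2}: either $L$ has no orthogonal decomposition $L=L_1\oplus\Z\gamma$ with $\alpha\in\Z\gamma$ — in which case $\alpha\notin 2L$ directly — or such a decomposition exists, in which case I would argue directly that $\alpha\notin 2L$ using that $\alpha$ is (up to scalar) a generator of the rank-one orthogonal component: writing $\alpha = m\gamma$ with $\gamma$ primitive, the condition $\alpha/pp'\in L^\circ$ forces $m\mid pp'$ and the norm condition $m^2\langle\gamma,\gamma\rangle = 2pp'$ together with primitivity of $\gamma$ pins down $m$ to be $1$ or $2$; I need to rule out $m=2$, which follows because if $\alpha=2\gamma$ then $\langle\gamma,\gamma\rangle=pp'/2$ is an integer only if $pp'$ is even, but then since $\gcd(p,p')=1$ exactly one of $p,p'$ is even, and a short parity check on $\langle\alpha,\gamma^\circ\rangle$ (where $\gamma^\circ$ is dual to the primitive generator) shows $\alpha\in 2L$ would contradict primitivity. (This is the one place I would need to be careful; the decomposable case is a minor nuisance rather than a real obstacle.)

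\medskip

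\textbf{Part 2: A screener gives a screening pair.} Suppose $\alpha$ is a screener with $\langle\alpha,\alpha\rangle=2pp'$, not assuming $\gcd(p,p')=1$. By definition $2\alpha/\langle\alpha,\alpha\rangle=\alpha/pp'\in L^\circ$, hence $\alpha/p=p'(\alpha/pp')\in L^\circ$ and $\alpha/p'=p(\alpha/pp')\in L^\circ$. Then I would produce the Virasoro element: by Part 3 (or independently, since $\alpha/pp'\in L^\circ$ and is primitive enough), there is $\gamma\in L^\circ$ with $\langle\gamma,\alpha\rangle=p-p'$; concretely one can take $\gamma = (p-p')\cdot(\text{any element }\delta\in L^\circ\text{ with }\langle\delta,\alpha\rangle=1)$ once such a $\delta$ exists — and such a $\delta$ exists precisely because $2\alpha/\langle\alpha,\alpha\rangle\in L^\circ$ together with $\alpha\notin 2L$ implies, via Lemma \ref{lem .2}, that $\langle 2\alpha/\langle\alpha,\alpha\rangle,\beta\rangle$ is odd for some $\beta\in L$, and then a gcd argument on the values $\langle\alpha,L^\circ\rangle$ gives $\langle\alpha,L^\circ\rangle=\Z$. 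With this $\gamma$, Theorem \ref{thm for screening pairs}(i)'s numerology is satisfied ($\langle\alpha,\gamma\rangle=p-p'$, $\langle\alpha,\alpha\rangle=2pp'$, $\alpha/p,\alpha/p'\in L^\circ$, $\alpha\in L^{(0)}$), and the computation $[e^{-\alpha/p}_0,e^{\alpha/p'}_0]=0$ carried out in the proof of Theorem \ref{thm for screening pairs} shows condition (i) of Definition \ref{screening pair definition} holds; condition (ii) (that $p,p'$ are the minimal periods realizing $(\cdots)_m(\cdots)=r\mathbf 1$) follows from equation (\ref{eab}) and the fact that $e^{-\alpha/p}$ and $e^{\alpha/p'}$ generate $e^{0}=\mathbf 1$ after exactly $p'$, $p$ applications respectively — minimality because $\alpha/pp'$ is not a proper lattice multiple thanks to $\alpha\notin 2L$. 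So $(e^{-\alpha/p}_0,e^{\alpha/p'}_0)$ is a screening pair of type $(p,p')$.

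\medskip

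\textbf{Part 3: Existence of a $\Z$-basis containing $\alpha$.} By Lemma \ref{first lem}(i), it suffices to exhibit $\beta\in L^\circ$ with $\langle\alpha,\beta\rangle=1$; equivalently, $\langle\alpha,L^\circ\rangle=\Z$. By Lemma \ref{first lem}(iii), $\langle\alpha,L^\circ\rangle\subset n\Z$ iff $\alpha\in nL$, so $\langle\alpha,L^\circ\rangle = n\Z$ where $n$ is the largest integer with $\alpha\in nL$; since $\alpha$ is a screener, $\alpha\notin 2L$, so $n=1$ and $\langle\alpha,L^\circ\rangle=\Z$. Hence the desired $\beta$ exists and Lemma \ref{first lem}(i) produces a $\Z$-basis of $L$ containing $\alpha$. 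I expect Part 1's decomposable sub-case to be the only mildly delicate point; everything else is a direct assembly of Theorem \ref{thm for screening pairs}, Lemma \ref{first lem}, and Lemma \ref{lem .2}.
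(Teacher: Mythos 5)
Your Parts 2 and 3 are essentially the paper's argument and are fine up to small repairs, but Part 1 has a genuine gap in exactly the place you flagged as "mildly delicate." In the decomposable sub-case $L=L_1\oplus\Z\beta$ with $\alpha\in\Z\beta$ ($\beta$ primitive), your reduction to $\alpha=\pm\beta$ or $\pm2\beta$ is correct in substance (the clean reason is $\langle 2\alpha/\langle\alpha,\alpha\rangle,\beta\rangle=2/m\in\Z$, not "$m\mid pp'$ plus primitivity"), but your exclusion of $\alpha=2\beta$ does not work: $\langle\alpha,\beta^\circ\rangle=2$ for the dual of the primitive generator, so there is no parity obstruction there, and "$\alpha\in 2L$ would contradict primitivity" is a non sequitur, since $\alpha$ is not assumed primitive and $\alpha=2\beta$ is perfectly compatible with $\beta$ being primitive. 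Moreover, no purely lattice-theoretic argument can close this case: for $L=\Z\beta$ and $\alpha=2\beta$ with $\langle\alpha,\alpha\rangle=2pp'$ one checks that $\alpha/p$, $\alpha/p'$ and $2\alpha/\langle\alpha,\alpha\rangle$ all lie in $L^\circ$, so every hypothesis you use is satisfied while $\alpha\in 2L$. The missing idea --- and the one the paper's proof uses --- is the Virasoro datum coming with the screening pair via Theorem \ref{thm for screening pairs}(i): there must exist $\gamma\in\bar{L}^\circ$ with $\langle\gamma,\alpha\rangle=p-p'$, which is odd here because $\alpha=2\beta$ forces $pp'=2\langle\beta,\beta\rangle$ to be even and $p,p'$ are coprime; but then $\langle\gamma,\beta\rangle=(p-p')/2\notin\Z$, contradicting $\gamma\in\bar{L}^\circ$. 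Without invoking this weight-one/Virasoro constraint, the case $\alpha=2\beta$ cannot be ruled out.

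Two smaller points. In Part 2 you obtain the odd pairing from Lemma \ref{lem .2}, whose hypotheses (rank $d>1$ and no orthogonal decomposition with $\alpha$ in the rank-one piece) you do not verify; the paper instead gets it directly from $\alpha\notin 2L$ via Lemma \ref{first lem}(iii), which has no such hypotheses, and combines it with $\langle\alpha,2\alpha/\langle\alpha,\alpha\rangle\rangle=2$ to produce $\bar\gamma\in L^\circ$ with $\langle\alpha,\bar\gamma\rangle=1$, then sets $\gamma=(p-p')\bar\gamma$. Similarly, in Part 3 the inference "$\alpha\notin 2L$ hence $n=1$" is not valid on its own (a priori $\alpha$ could lie in $nL$ for odd $n>1$); you need $2\in\langle\alpha,L^\circ\rangle$ to force $n\mid 2$ (or Lemma \ref{basis screener results}(i), which is proved only after this lemma). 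These are easy to fix, and once fixed Parts 2--3 coincide with the paper's proof; the decomposable sub-case of Part 1 is the real gap.
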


\begin{proof} If $(e^{-\alpha/p}_0, e^{\alpha/p'}_0)$ is a screening pair, then $\alpha/p, \alpha/p'\in L^\circ$.  If $p,p'$ are relatively prime, then $\alpha/pp'\in L^\circ$, which implies $2\alpha/\langle\alpha,\alpha\rangle \in L^\circ$. By Lemma \ref{lem .2}, we know $\alpha\notin 2L$ except in the case $L$ has an orthogonal decomposition $L=L_1\oplus \Z\beta$ with $\alpha\in \Z\beta$. In this case, the only possibilities are $\alpha=\beta$ or $\alpha=2\beta$. Suppose it is the latter case. Then $\langle\alpha,\alpha\rangle\in 4\Z$, and so exactly one of $p,p'$ are even (recall $p,p'$ are relatively prime), and by 
Theorem \ref{thm for screening pairs} there exists a $\gamma\in \bar{L}^\circ$ such that $ \langle\alpha,\gamma\rangle=p-p'\in 2\Z+1$. But then $\langle \beta,\gamma\rangle\in \Z+ \frac{1}{2}$, contradicting $\gamma\in \bar{L}^\circ$. Hence $\alpha \notin 2L$ and $\alpha$ is a screener, proving the first statement in the lemma.

For the second statement, we only need to verify such a $\gamma$ exists as in Theorem \ref{thm for screening pairs}. 
Since $\alpha$ is a screener, we have that $\alpha\notin 2L$ by definition, and so $\langle \alpha,L^\circ\rangle \nsubseteq 2\Z$ by Lemma \ref{first lem}$(iii)$.  But $\beta =2\alpha/\langle \alpha,\alpha\rangle\in L^\circ$  satisfies $\langle \alpha,\beta\rangle=2$.  Thus we can find an element $\bar{\gamma}$ with $\langle\alpha,\bar{\gamma}\rangle=1$, and setting $\gamma=(p-p')\bar{\gamma}$ gives the required element in $L^\circ$. By Lemma \ref{first lem}$(i)$, since such a $\bar{\gamma}$ exists, there exists a $\Z$-basis containing $\alpha$, proving the last statement. 
\end{proof}

We now give a few lemmas that give a characterization of $\langle\cdot,\cdot \rangle$ restricted to $\Phi$.

\begin{lem}  \label{interaction of bilinear form}
	Let $\alpha,\beta\in \Phi$. Without loss of generality suppose $$\langle \alpha,\alpha\rangle \leq  \langle \beta,\beta\rangle.$$
	Then one of the following holds:
	\begin{align*} 
	(i) \quad   &\langle \alpha,\beta\rangle=  0 . \\
	(ii)\quad  &\langle \alpha,\beta\rangle= \pm\langle\beta,\beta\rangle  \mbox{ and } \alpha = \pm \beta. \\
	(iii)\quad  &\langle \alpha,\beta\rangle= \pm \frac{1}{2}\langle\beta,\beta\rangle \mbox{ and } \langle \beta,\beta\rangle=a \langle \alpha,\alpha\rangle, \mbox{ with } a=1,2,3.\\
	\end{align*} 
\end{lem}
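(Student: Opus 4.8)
The plan is to exploit the key arithmetic constraint coming from the definition of a screener: if $\alpha \in \Phi$, then $2\alpha/\langle\alpha,\alpha\rangle \in L^\circ$, hence pairing with any lattice element gives an integer. In particular, for $\alpha,\beta \in \Phi$ we have
\begin{align*}
\frac{2\langle\alpha,\beta\rangle}{\langle\alpha,\alpha\rangle} \in \Z \qquad \text{and} \qquad \frac{2\langle\alpha,\beta\rangle}{\langle\beta,\beta\rangle} \in \Z.
\end{align*}
This is exactly the classical "integrality condition" familiar from root systems. Set $m = 2\langle\alpha,\beta\rangle/\langle\alpha,\alpha\rangle$ and $n = 2\langle\alpha,\beta\rangle/\langle\beta,\beta\rangle$, both integers. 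Multiplying gives $mn = 4\langle\alpha,\beta\rangle^2/(\langle\alpha,\alpha\rangle\langle\beta,\beta\rangle)$, and since the lattice is positive definite, the Cauchy--Schwarz inequality yields $0 \le mn \le 4$, with $mn = 4$ only if $\alpha$ and $\beta$ are parallel.

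First I would dispose of the parallel case $mn = 4$: then $\langle\alpha,\beta\rangle^2 = \langle\alpha,\alpha\rangle\langle\beta,\beta\rangle$, so $\beta = c\alpha$ for some $c \in \Q$; writing $c = r/s$ in lowest terms, the screener condition $\beta \notin 2L$ together with $\alpha \notin 2L$ and integrality forces $c = \pm 1$, i.e. $\beta = \pm\alpha$ and $\langle\alpha,\beta\rangle = \pm\langle\alpha,\alpha\rangle = \pm\langle\beta,\beta\rangle$; this is case $(ii)$. (Here one uses that if $\beta = c\alpha \in L$ with $\alpha$ primitive-ish modulo $2L$, the only scalar multiples of $\alpha$ that can themselves avoid $2L$ and have $2\beta/\langle\beta,\beta\rangle \in L^\circ$ are $\pm\alpha$.) Next, if $\langle\alpha,\beta\rangle = 0$ we are in case $(i)$ and there is nothing to prove. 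So assume $\langle\alpha,\beta\rangle \neq 0$ and $\alpha,\beta$ not parallel, i.e. $1 \le mn \le 3$. Under the normalization $\langle\alpha,\alpha\rangle \le \langle\beta,\beta\rangle$ we have $|m| \ge |n|$ (since $|m|/|n| = \langle\beta,\beta\rangle/\langle\alpha,\alpha\rangle \ge 1$), and $|n| \ge 1$, so $mn \in \{1,2,3\}$ forces $|n| = 1$, hence $\langle\alpha,\beta\rangle = \pm\tfrac12\langle\beta,\beta\rangle$, and $|m| = mn \cdot \tfrac{|n|}{|m|} \cdot \tfrac{1}{|n|}$... more simply $|m| = mn/|n| = mn \in \{1,2,3\}$, so $\langle\beta,\beta\rangle / \langle\alpha,\alpha\rangle = |m|/|n| = |m| = a$ with $a \in \{1,2,3\}$. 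This is exactly case $(iii)$.

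The main obstacle I anticipate is the parallel case: one must rule out $\beta = \pm 2\alpha$ (and similar) cleanly using the defining properties of $\Phi$ (both $\alpha \notin 2L$ and $2\alpha/\langle\alpha,\alpha\rangle \in L^\circ$), rather than waving at "primitivity." The cleanest argument is: if $\beta = c\alpha \in L^{(0)}$ with $c \in \Q_{>0}$ (WLOG), then from $\langle\alpha,\alpha\rangle \le \langle\beta,\beta\rangle = c^2\langle\alpha,\alpha\rangle$ we get $c \ge 1$; and $2\beta/\langle\beta,\beta\rangle = \tfrac{2\alpha}{c\langle\alpha,\alpha\rangle} \in L^\circ$ together with $2\alpha/\langle\alpha,\alpha\rangle \in L^\circ$ constrains $c$; finally $\beta = c\alpha \in L$ with $\beta \notin 2L$ forces $c$ to be an odd integer or to have odd numerator, and combined with the previous constraint pins down $c = 1$. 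I would also remark, as a sanity check feeding into the surrounding classification, that case $(iii)$ with $a=1$ recovers the $A_n$/$D_n$/$E_n$ root-angle relation, $a=2$ the $B_n$/$C_n$/$F_4$ relation, and $a=3$ the $G_2$ relation, which is consistent with the statement of the main Theorem.
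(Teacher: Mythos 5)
Your proof is correct and follows essentially the same route as the paper: both extract the two integrality conditions $2\langle\alpha,\beta\rangle/\langle\alpha,\alpha\rangle,\ 2\langle\alpha,\beta\rangle/\langle\beta,\beta\rangle \in \Z$ from the screener condition $2\alpha/\langle\alpha,\alpha\rangle\in L^\circ$, bound their product by $4$ via Cauchy--Schwarz, and dispose of the degenerate parallel case $\beta=\pm 2\alpha$ using $\beta\notin 2L$. The paper merely packages the same data differently, writing $\langle\alpha,\beta\rangle = k\,\mathrm{lcm}(\langle\alpha,\alpha\rangle,\langle\beta,\beta\rangle)/2$ and deducing $k^{2}mn\le 4$, which is your bound $mn\le 4$ in other notation.
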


\begin{proof}  
If $\alpha,\beta\in\Phi$, then it follows that $\langle 2\beta/\langle \beta,\beta\rangle,\alpha\rangle, \langle 2\alpha/\langle \alpha,\alpha\rangle,\beta\rangle \in \Z$, which implies that if $l=\mbox{lcm}(\langle \beta,\beta\rangle,\langle \alpha,\alpha\rangle)$, then $\langle\alpha,\beta\rangle \in \frac{l}{2}\Z.$  Let $m,n\in \Z_+$ such that $l= m\langle\alpha,\alpha,\rangle= n \langle\beta,\beta\rangle$. Then $\mbox{gcd}(m,n)=1$ and $m\geq n$. Let $k\in\Z$ be such that $\langle\alpha,\beta\rangle=k l/2$. Then
\begin{align*}
\langle \alpha,\beta\rangle=\frac{k m}{2}\langle\alpha,\alpha\rangle= \frac{k n}{2}\langle\beta,\beta\rangle
\end{align*}
and $\langle \alpha,\alpha \rangle = (n/m)\langle \beta,\beta\rangle$. By the Cauchy-Schwartz inequality, it follows that 
\begin{align}\label{cs} 
0\leq \langle\alpha,\beta\rangle^2 \leq \langle\alpha,\alpha\rangle\langle\beta,\beta\rangle=\frac{n}{m}\langle\beta,\beta\rangle^2,
\end{align} 
with the second inequality being equality if and only if $\alpha=r \beta$ for some $r\in\mathbb{R}$.  
This implies
\[0 \leq \frac{k^2 n^2}{4}\langle \beta,\beta\rangle^2 \leq \frac{n}{m}\langle\beta,\beta\rangle^2, \]
i.e. $0\leq k^2 m n \leq 4$. When $m=4$, then the second inequality in (\ref{cs}) is equality, and so $\beta=2\alpha$, which contradicts the assumption that $\beta\in\Phi$. The cases $k=0$, $k=\pm 2$, and $k=\pm 1$  lead to the statements {\it(i)}, {\it(ii)}, and {\it(iii)} in the lemma, respectively, where in case {\it(ii)} we also have $\alpha=\pm \beta$, since $k=\pm 2$ gives equality in the second inequality in (\ref{cs}).  
\end{proof}

The next lemma describes the internal structure of $\Phi$. Notice how some of the properties mirror that of a root system, but the set of screeners is at this point theoretically possibly larger.

\begin{lem} \label{basis screener results} Let $\alpha,\beta\in \Phi .$ Without loss of generality suppose $\langle \alpha,\alpha\rangle \leq  \langle \beta,\beta\rangle$.  Then the following hold:

	(i) The only other multiple of $\alpha$ that is a screener is $-\alpha$.  No element in $nL$ for $n>1$ is a screener. 

	(ii) If $\langle\alpha,\beta\rangle=-\langle \beta,\beta\rangle/2$, then $\alpha+\beta\in \Phi.$ 

	(iii) Suppose $\langle \alpha,\beta\rangle= 0$.  Then $\alpha\pm\beta\in \Phi$ if and only if the following 3 properties hold: $\langle\alpha,\alpha\rangle=\langle\beta,\beta\rangle$, $\langle \alpha\pm \beta,L\rangle \subset \langle\alpha,\alpha\rangle\Z$, and $\alpha\pm \beta\notin 2L$.  
\end{lem}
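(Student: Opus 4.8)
The plan is to argue everything straight from Definition~\ref{screener defn}, i.e.\ that a nonzero $\gamma\in L$ lies in $\Phi$ precisely when $\gamma\in L^{(0)}$, $2\gamma/\langle\gamma,\gamma\rangle\in L^\circ$, and $\gamma\notin 2L$; in each part the $L^{(0)}$- and $L^\circ$-conditions are one-line bilinear-form computations, and the real content is in the ``$\notin 2L$'' conditions. For (i), I would first isolate the auxiliary claim: if $\gamma\in\Phi$ and $\gamma=n\gamma'$ for some $\gamma'\in L$ and $n\in\Z$, then $n=\pm1$. Indeed, pairing $2\gamma/\langle\gamma,\gamma\rangle\in L^\circ$ with $\gamma'$ gives $2\langle\gamma,\gamma'\rangle/\langle\gamma,\gamma\rangle=2/n\in\Z$, so $n\mid 2$; and $n=\pm2$ would put $\gamma=\pm 2\gamma'\in 2L$, contradicting $\gamma\in\Phi$. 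Applying this with $\gamma=m\alpha$, $\gamma'=\alpha$ shows $m\alpha\in\Phi$ forces $m=\pm1$, i.e.\ the only screeners among multiples of $\alpha$ are $\pm\alpha$; applying it to an arbitrary screener $\gamma\in nL$ with $\gamma'=\gamma/n$ forces $n=1$, so $nL$ contains no screener for $n>1$.

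For (ii): the hypothesis $\langle\alpha,\beta\rangle=-\langle\beta,\beta\rangle/2$ excludes cases (i) and (ii) of Lemma~\ref{interaction of bilinear form}, so we are in its case (iii) and $a:=\langle\beta,\beta\rangle/\langle\alpha,\alpha\rangle\in\{1,2,3\}$. Then $\langle\alpha+\beta,\alpha+\beta\rangle=\langle\alpha,\alpha\rangle+2\langle\alpha,\beta\rangle+\langle\beta,\beta\rangle=\langle\alpha,\alpha\rangle\in 2\Z$, so $\alpha+\beta$ is a nonzero vector in $L^{(0)}$; moreover $2(\alpha+\beta)/\langle\alpha,\alpha\rangle=2\alpha/\langle\alpha,\alpha\rangle+a\bigl(2\beta/\langle\beta,\beta\rangle\bigr)$ is a $\Z$-combination of two elements of $L^\circ$, hence in $L^\circ$. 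Finally, if $\alpha+\beta=2\delta\in 2L$, then pairing $2\beta/\langle\beta,\beta\rangle\in L^\circ$ with $\delta$ yields $\langle\beta,2\delta\rangle/\langle\beta,\beta\rangle=\langle\beta,\alpha+\beta\rangle/\langle\beta,\beta\rangle=1/2\notin\Z$, a contradiction; hence $\alpha+\beta\notin 2L$ and $\alpha+\beta\in\Phi$.

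For (iii): replacing $\beta$ by $-\beta\in\Phi$ reduces matters to the sign $+$. For the ``if'' direction, if $\langle\alpha,\alpha\rangle=\langle\beta,\beta\rangle=:N$ then $\langle\alpha+\beta,\alpha+\beta\rangle=2N\in 2\Z$, so $\alpha+\beta\in L^{(0)}$, while $2(\alpha+\beta)/(2N)=(\alpha+\beta)/N\in L^\circ$ is exactly the hypothesis $\langle\alpha+\beta,L\rangle\subset N\Z$; together with $\alpha+\beta\notin 2L$ this gives $\alpha+\beta\in\Phi$. For the ``only if'' direction, set $S=\langle\alpha,\alpha\rangle+\langle\beta,\beta\rangle=\langle\alpha+\beta,\alpha+\beta\rangle$ and assume $\alpha+\beta\in\Phi$; pairing $2(\alpha+\beta)/S\in L^\circ$ with $\alpha$ and with $\beta$ (using $\langle\alpha,\beta\rangle=0$) shows $2\langle\alpha,\alpha\rangle/S$ and $2\langle\beta,\beta\rangle/S$ are positive integers summing to $2$, hence each equals $1$; so $\langle\alpha,\alpha\rangle=\langle\beta,\beta\rangle=S/2=:N$, and then $(\alpha+\beta)/N\in L^\circ$ reads $\langle\alpha+\beta,L\rangle\subset N\Z$, while $\alpha+\beta\notin 2L$ holds because $\alpha+\beta\in\Phi$.

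The step I expect to be the crux is the uniform handling of the ``$\notin 2L$'' requirements. Lemma~\ref{lem .2} delivers such conclusions only under a non-decomposability hypothesis on $L$ (no orthogonal summand $\Z\gamma$ containing the vector in question), which I do not want to impose here. The way around it is to observe that the screener already available ($\alpha$ or $\beta$ in (i) and (ii), and $\alpha+\beta$ itself in the ``if'' half of (iii)) carries an element of $L^\circ$ of the shape $2\gamma/\langle\gamma,\gamma\rangle$, and pairing this against the putative half of the relevant vector forces the contradiction $1/2\notin\Z$ directly. Once that observation is in place, the remainder is routine bookkeeping with $\langle\cdot,\cdot\rangle$.
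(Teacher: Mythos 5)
Your argument is correct, and in parts (i) and (iii) it is essentially the paper's proof: the same pairing computation giving $2/n\in\Z$ together with the $\notin 2L$ clause for (i), and the same observation in (iii) that $2\langle\alpha,\alpha\rangle/S$ and $2\langle\beta,\beta\rangle/S$ are two positive rationals summing to $2$, hence both equal to $1$, after which the remaining screener conditions are exactly the three stated properties. Where you genuinely diverge is the $\alpha+\beta\notin 2L$ step in (ii). The paper computes $\langle\alpha+\beta,\alpha\rangle=(2-a)\langle\alpha+\beta,\alpha+\beta\rangle/2$ and argues by cases: for $a=1,3$ it rules out an orthogonal decomposition $L=L_1\oplus\Z(\alpha+\beta)$ and then invokes Lemma \ref{lem .2}, while $a=2$ is handled separately via Lemma \ref{first lem}. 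You instead pair the dual vector $2\beta/\langle\beta,\beta\rangle\in L^\circ$ (available because $\beta\in\Phi$) against a putative $\delta\in L$ with $\alpha+\beta=2\delta$ and obtain the non-integer value $1/2$, a contradiction. Your route is shorter, uniform in $a$ (you only need $a\in\Z$ from Lemma \ref{interaction of bilinear form}, not the case split), and it sidesteps the rank and indecomposability hypotheses built into Lemma \ref{lem .2}; the paper's route buys nothing extra here beyond reusing its decomposition machinery, so your handling of this step is a genuine simplification. Two small points you should make explicit but which are immediate from Definition \ref{screener defn}: $\Phi$ is stable under negation (used when you replace $\beta$ by $-\beta$ in (iii), and implicitly in asserting $-\alpha\in\Phi$ in (i)), and, like the paper, you interpret ``multiple of $\alpha$'' in (i) as an integer multiple.
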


\begin{proof} For $a\in \Z$ and $\alpha\in L$, suppose $a\alpha\in \Phi$. Then we have that $\langle 2a\alpha/\langle a\alpha,a\alpha\rangle,\alpha\rangle = 2/a$. By Definition \ref{screener defn}, this implies $2/a\in\Z$ and also that $\alpha\notin 2L$. Thus we must have that $a=\pm 1$, proving $(i)$.

For $(ii)$,  let $\langle\alpha,\beta\rangle=-\langle\beta,\beta\rangle/2$ and $\langle\beta,\beta\rangle=a\langle\alpha,\alpha\rangle$, where by Lemma \ref{interaction of bilinear form}, we must have that $a=1,2,3$.  Then $\langle\alpha+\beta,\alpha+\beta\rangle=\langle\alpha,\alpha\rangle$, so that $2(\alpha+\beta)/\langle\alpha+\beta,\alpha+\beta\rangle \in L^\circ$. Since $\langle \alpha+\beta,\alpha\rangle=(2-a)\langle\alpha+\beta,\alpha+\beta\rangle/2$, there is no orthogonal decomposition of the form $L=L_1\oplus \Z(\alpha+\beta)$ when $a=1,3$.  When there is no orthogonal decomposition, Lemma \ref{lem .2} implies that $\alpha+\beta \notin 2 L$. On the other hand, if there is an orthogonal decomposition in the case $a=2$, then by Lemma \ref{first lem}, again $\alpha+\beta\notin 2L$, proving $(ii)$.

For $(iii)$,  first suppose that $\langle\alpha,\beta\rangle=0$. In order for $\alpha\pm \beta\in \Phi$, we must have in particular that $2\langle (\alpha\pm \beta)/(\langle\alpha,\alpha\rangle + \langle\beta,\beta\rangle),\alpha\rangle \in \Z$ and 
$ 2\langle (\alpha\pm \beta)/(\langle\alpha,\alpha\rangle + \langle\beta,\beta\rangle),\beta\rangle\in \Z$. This can happen if and only if $\langle\alpha,\alpha\rangle=\langle\beta,\beta\rangle$. In this case, $\alpha\pm \beta $ can be a screener if and only if $\langle \alpha\pm \beta, L\rangle\subset \langle\alpha,\alpha\rangle\Z$ and $\alpha\pm \beta\notin 2L$, proving $(iii)$. 
\end{proof}

\begin{rema}
Notice that Lemma \ref{interaction of bilinear form} shows that the possibles angles between any two vectors in $\Phi$ are the same as the possible angles between root vectors in a root system for a finite dimensional Lie algebra \cite{H}, and Lemma \ref{interaction of bilinear form} part $(i)$ and $(ii)$ also gives properties identical to those for a root system. 
\end{rema}

Viewing $\Phi$ as a subset of $\C\otimes_\Z L$, we call a linearly independent subset $\overline{\Phi}$ of $\Phi$ of size $r$ a {\it screening system of rank $r$}.  Notice that unlike the case with just one screener, it is not necessarily true that there exists a Virasoro element $\omega_\gamma$ with a $\gamma\in L^\circ$ simultaneously satisfying $\langle \gamma,\alpha_i\rangle=p_i-q_i$ when $\langle\alpha_i,\alpha_i\rangle=2p_i q_i$ and $\alpha_i\in \Phi$. For instance, in some cases such a choice of conformal element $\omega_\gamma$ may admit $L(0)$-eigenvalues that do not satisfy the $\Z$-grading condition on $V^0$ in the definition of vertex operator algebra. In the case that such $\gamma$ exists that does give a $\Z$-gradation on $V^0$, we denote by $(\overline{\Phi},\gamma)$ the screening system $\overline{\Phi}$ along with such a $\gamma$.  

Of special importance to us are lattices which admit screening systems of size $d$, the rank of the lattice, which we will refer to as a {\it full screening system} for $L$. In this case we can write $\overline{\Phi}=\{\alpha_1,\dots,\alpha_d\}$, and with $\hat{L}=\Z\overline{\Phi}$, we have the standard corresponding $\Z$-basis $ \{\alpha_1^\circ,\dots,\alpha_d^\circ\}$ for $\hat{L}^\circ$. Then such a $\gamma$, if it exists in $L^\circ$ as well, is uniquely determined, and given by $\sum_{i=1}^d (p_i-q_i) \alpha_i^\circ$.

Now we introduce some more notation. Suppose $\Z\Phi$ is of rank $r\leq d$. Then there is a maximal sublattice $L_1\subset L$ of rank $r$ such that $\Z\Phi\subset L_1$.  Let $B_1$ be a $\Z$-basis for $L_1$, and $A$ be a $\Z$-basis for $\Z\Phi$.     By the maximality of $L_1$, $L=L_1\oplus L_2$ as a finitely generated abelian group for some sublattice $L_2$ of rank $d-r$, and a basis $B$ can be chosen for $L$ such that $B=B_1\cup B_2$, where $B_2$ is a $\Z$-basis for $L_2$. Let $L_\Phi= \Z(A\cup B_2)$, that is, \begin{align} \label{splitting up L} 
L_\Phi=\Z\Phi\oplus L_2
\end{align}  as a finitely generated abelian group.
Similarly, we can define $\Z\overline{\Phi}$ and $L_{\overline{\Phi}}$  for a screening system $(\overline{\Phi},\gamma)$.

\begin{lem} \label{divides deg G}
	Let $G_L$ be a Gram matrix for $L$ with respect to some ordered $\Z$-basis, and $S\subset \Phi$ a $\Z$-basis for $\Z\Phi$.  Then $\mbox{lcm}(\langle \alpha,\alpha\rangle/2|\,\alpha\in \Phi )$ divides $\mbox{Det}(G_L)$.
	
\end{lem}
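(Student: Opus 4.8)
The statement to prove is that $\mathrm{lcm}(\langle\alpha,\alpha\rangle/2 \mid \alpha\in\Phi)$ divides $\mathrm{Det}(G_L)$. Since the $\mathrm{lcm}$ of a finite set of integers divides an integer $N$ as soon as each element divides $N$, it suffices to show that $\langle\alpha,\alpha\rangle/2$ divides $\mathrm{Det}(G_L)$ for each individual screener $\alpha\in\Phi$. So fix $\alpha\in\Phi$ and write $\langle\alpha,\alpha\rangle = 2pp'$ where $p,p'$ can be taken relatively prime (or simply work with $\langle\alpha,\alpha\rangle/2$ directly). Recall $\mathrm{Det}(G_L) = |L^\circ/L|$, which is independent of the chosen $\Z$-basis, so we are free to pick a convenient basis.

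First I would invoke Lemma \ref{Z-base with screener}: since $\alpha$ is a screener, there exists a $\Z$-basis of $L$ containing $\alpha$; say $B = \{\alpha, \beta_2,\dots,\beta_d\}$. The key arithmetic input is that $2\alpha/\langle\alpha,\alpha\rangle \in L^\circ$, equivalently $\langle\alpha,L\rangle \subseteq \tfrac{1}{2}\langle\alpha,\alpha\rangle\,\Z = pp'\,\Z$ (with $\langle\alpha,\alpha\rangle/2 = pp'$). Now consider the Gram matrix $G$ of $L$ with respect to $B$. Its first row (and column) is $(\langle\alpha,\alpha\rangle, \langle\alpha,\beta_2\rangle,\dots,\langle\alpha,\beta_d\rangle) = (2pp', \langle\alpha,\beta_2\rangle,\dots,\langle\alpha,\beta_d\rangle)$, and every entry $\langle\alpha,\beta_j\rangle$ is divisible by $pp' = \langle\alpha,\alpha\rangle/2$ by the screener condition. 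Expanding $\mathrm{Det}(G)$ along the first row (cofactor expansion), every term contains a factor of the form $\langle\alpha,\beta_j\rangle$ (or $\langle\alpha,\alpha\rangle$), each of which is divisible by $\langle\alpha,\alpha\rangle/2$; hence $\langle\alpha,\alpha\rangle/2$ divides $\mathrm{Det}(G)$. Since $\mathrm{Det}(G) = \mathrm{Det}(G_L)$ up to sign (change of $\Z$-basis), we conclude $\langle\alpha,\alpha\rangle/2 \mid \mathrm{Det}(G_L)$.

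Having done this for every $\alpha\in\Phi$, we get that $\mathrm{Det}(G_L)$ is a common multiple of all the $\langle\alpha,\alpha\rangle/2$, hence is divisible by their least common multiple, which is precisely the claim. Strictly speaking the hypothesis mentions a $\Z$-basis $S\subset\Phi$ for $\Z\Phi$, but the argument above shows the divisibility holds for \emph{all} screeners, so in particular for those in $S$; one could phrase the proof using only $S$ if one prefers to stay literal to the statement, but it costs nothing to prove the stronger version.

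The only mild subtlety — and the place I would be careful — is the translation of "$2\alpha/\langle\alpha,\alpha\rangle\in L^\circ$" into "$\langle\alpha,\beta_j\rangle$ is divisible by $\langle\alpha,\alpha\rangle/2$ for every basis element $\beta_j$." This is immediate: $2\alpha/\langle\alpha,\alpha\rangle\in L^\circ$ means $\langle 2\alpha/\langle\alpha,\alpha\rangle,\,\beta_j\rangle\in\Z$, i.e. $2\langle\alpha,\beta_j\rangle/\langle\alpha,\alpha\rangle\in\Z$, i.e. $\langle\alpha,\alpha\rangle/2$ divides $\langle\alpha,\beta_j\rangle$ (using that $\langle\alpha,\alpha\rangle$ is even since $\alpha\in L^{(0)}$, so $\langle\alpha,\alpha\rangle/2\in\Z$). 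There is no real obstacle here; the whole proof is a one-line cofactor-expansion observation once the right basis is chosen, with Lemma \ref{Z-base with screener} supplying that basis.
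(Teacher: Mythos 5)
Your proof is correct, but it takes a different route from the paper's. The paper argues through the discriminant group: since $2\alpha/\langle\alpha,\alpha\rangle \in L^\circ$, the coset $2\alpha/\langle\alpha,\alpha\rangle + L$ is an element of the finite group $L^\circ/L$, and its order is \emph{exactly} $\langle\alpha,\alpha\rangle/2$ --- if the order were a proper divisor, then $\alpha$ would lie in $nL$ for some $n>1$, contradicting Lemma \ref{basis screener results}$(i)$ --- so Lagrange's theorem in the group of order $|L^\circ/L| = \mbox{Det}(G_L)$ gives the divisibility for each $\alpha\in\Phi$, hence for the lcm. Your argument instead uses the last statement of Lemma \ref{Z-base with screener} (a $\Z$-basis of $L$ containing $\alpha$) together with a cofactor expansion of the Gram determinant along the row of $\alpha$, whose entries all lie in $\frac{\langle\alpha,\alpha\rangle}{2}\Z$ by the screener condition; this is more elementary (no group theory, only the containment $\langle\alpha,L\rangle\subseteq \frac{\langle\alpha,\alpha\rangle}{2}\Z$), though the primitivity of $\alpha$ that the paper uses explicitly enters your proof implicitly, since the proof of Lemma \ref{Z-base with screener} needs $\alpha\notin 2L$ to produce the basis. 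Two small remarks: under a change of $\Z$-basis the Gram determinant is exactly invariant, since $\mbox{Det}(AGA^T)=\mbox{Det}(A)^2\mbox{Det}(G)=\mbox{Det}(G)$, so ``up to sign'' is not needed; and your observation that the basis $S$ in the hypothesis plays no role is consistent with the paper, whose proof likewise establishes the divisibility for every $\alpha\in\Phi$ without ever using $S$.
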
 

\begin{proof} Recall that $|L^\circ/L|= \mbox{Det}(G_L)$.  Let $\alpha\in \Phi$. Since $2\alpha/\langle\alpha,\alpha\rangle \in L^\circ$, then $2\alpha/\langle\alpha,\alpha\rangle + L$ is an element of $L^\circ/L$ of order $\langle\alpha,\alpha\rangle/2$, since if not, then $\alpha\in n L$ for $n>1$, contradicting Lemma \ref{basis screener results}$(i)$. It follows that $\mbox{lcm}(\langle \alpha,\alpha\rangle/2|\,\alpha\in \Phi )$ divides $\mbox{Det}(G_L)$.  
\end{proof}

\begin{lem}  \label{some lem}
	We have the (rational) sublattice embedding and properties of quotients:

	(i) $2L\subset L_\Phi \subset L\subset L^\circ \subset (L_\Phi)^\circ\subset (2L)^\circ$;

	(ii) If $L\neq L_\Phi$, then $|(L_\Phi)^\circ/L_\Phi|  \mbox{ divides } 4^r |L^\circ/L|$ for some $1\leq r\leq d$. 
\end{lem}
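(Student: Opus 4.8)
The plan is to record the formal inclusions in (i), isolate $2L\subseteq L_\Phi$ as the one step with genuine content, prove that step, and then deduce (ii) by an index count.

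\emph{Part (i).} The inclusion $L_\Phi\subseteq L$ is immediate from the construction $L_\Phi=\Z(A\cup B_2)$ with $A\cup B_2\subseteq L$, and $L\subseteq L^\circ$ is just the integrality of $L$. Since $L_\Phi$ has full rank $d$, all of $L,L^\circ,L_\Phi,(L_\Phi)^\circ$ sit inside $\Q\otimes L$, and the remaining two inclusions $L^\circ\subseteq (L_\Phi)^\circ$ and $(L_\Phi)^\circ\subseteq (2L)^\circ$ follow by applying the order-reversing operation $M\mapsto M^\circ$ to $L_\Phi\subseteq L$ and to $2L\subseteq L_\Phi$ respectively. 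So the content of (i) is the single inclusion $2L\subseteq L_\Phi$. Using $L=L_1\oplus L_2$ and $L_\Phi=\Z\Phi\oplus L_2$ as abelian groups, quotienting by the common summand $L_2$ identifies $L/L_2$ with $L_1$ and $L_\Phi/L_2$ with $\Z\Phi$, so $2L\subseteq L_\Phi$ is equivalent to $2L_1\subseteq\Z\Phi$, i.e.\ to the assertion that $L_1/\Z\Phi$ has exponent dividing $2$.

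\emph{The main obstacle} is proving $2L_1\subseteq\Z\Phi$, where $L_1=(\Q\Phi)\cap L$ is the saturation of $\Z\Phi$ in $L$ (a positive definite integral lattice of rank $r$, the rank of $\Z\Phi$). I would argue by induction on $\langle\mu,\mu\rangle$ that $2\mu\in\Z\Phi$ for every $\mu\in L_1$. If $\mu\in 2L$, write $\mu=2\mu'$ with $\mu'\in L$; since $\mu\in\Q\Phi$ we get $\mu'\in L_1$ with $\langle\mu',\mu'\rangle<\langle\mu,\mu\rangle$, so $2\mu'\in\Z\Phi$ by the inductive hypothesis and hence $2\mu=2(2\mu')\in\Z\Phi$. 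If $\mu\notin 2L$, $\langle\mu,\mu\rangle$ is even, and $2\mu/\langle\mu,\mu\rangle\in L^\circ$, then $\mu$ is itself a screener, so $\mu\in\Phi\subseteq\Z\Phi$ and we are done. The remaining case is $\mu\notin 2L$ but $\mu$ failing one of the numerical conditions defining a screener; here one should exploit that for every screener $\alpha$ one has $\langle\mu,2\alpha/\langle\alpha,\alpha\rangle\rangle\in\Z$ (because $\mu\in L$ and $2\alpha/\langle\alpha,\alpha\rangle\in L^\circ$), feed this into Lemma \ref{interaction of bilinear form} to pin down $\langle\mu,\alpha\rangle$ modulo $\langle\alpha,\alpha\rangle\Z$ for all $\alpha\in\Phi$, and combine with Lemma \ref{first lem}, Lemma \ref{lem .2}, and, crucially, the completeness of $\Phi$ (if a representative of a nonzero coset $\mu+\Z\Phi$ satisfied the screener conditions it would already lie in $\Z\Phi$) to force $2\mu\in\Z\Phi$. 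This case analysis, drawing on the structural lemmas on $\Phi$, is where I expect the technical heart of the proof to lie.

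\emph{Part (ii).} Assume $L\neq L_\Phi$, so $r\geq 1$ (and $r\leq d$ always). From the chain $L_\Phi\subseteq L\subseteq L^\circ\subseteq (L_\Phi)^\circ$ of (i) and multiplicativity of indices,
\[
|(L_\Phi)^\circ/L_\Phi| \;=\; [(L_\Phi)^\circ:L^\circ]\,[L^\circ:L]\,[L:L_\Phi].
\]
The standard duality for finite-index full-rank sublattices gives $[(L_\Phi)^\circ:L^\circ]=[L:L_\Phi]$, whence $|(L_\Phi)^\circ/L_\Phi|=[L:L_\Phi]^2\,|L^\circ/L|$. Finally $[L:L_\Phi]=[L_1\oplus L_2:\Z\Phi\oplus L_2]=[L_1:\Z\Phi]$, and since $2L_1\subseteq\Z\Phi\subseteq L_1$ with $L_1$ of rank $r$, this index divides $[L_1:2L_1]=2^r$. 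Therefore $|(L_\Phi)^\circ/L_\Phi|$ divides $(2^r)^2|L^\circ/L|=4^r|L^\circ/L|$ for $r$ with $1\leq r\leq d$, as claimed.
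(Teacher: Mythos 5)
Your reduction of (i) to the single inclusion $2L\subseteq L_\Phi$, i.e.\ $2L_1\subseteq\Z\Phi$, and your index computation in (ii) are fine (and (ii) is essentially the paper's own determinant argument, $\mathrm{Det}(G_2)=\mathrm{Det}(A)^2\mathrm{Det}(G_1)$). But the inclusion $2L_1\subseteq\Z\Phi$ is the entire substantive content of the lemma, and you do not prove it. Your induction on $\langle\mu,\mu\rangle$ only disposes of two cases: $\mu\in 2L$ (reduce) and $\mu$ itself a screener. Since $\Phi$ is a finite set, every primitive $\mu\in L_1$ of sufficiently large norm falls into your unresolved ``remaining case,'' and for such $\mu$ the induction offers no reduction whatsoever; so this is the generic case, not an edge case. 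Moreover the tool you propose there does not apply: Lemma \ref{interaction of bilinear form} constrains the pairing of \emph{two screeners}, whereas for a general $\mu\in L_1$ against $\alpha\in\Phi$ all you can extract from $2\alpha/\langle\alpha,\alpha\rangle\in L^\circ$ is $\langle\mu,\alpha\rangle\in\frac{\langle\alpha,\alpha\rangle}{2}\Z$, which is no information beyond the definitions and ``pins down'' nothing; the appeal to ``completeness of $\Phi$'' is a hope, not an argument. So there is a genuine gap exactly at the technical heart you flag.

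The paper closes this gap on the dual side rather than inside $L_1$: in the full-rank case $L_\Phi=\Z\Phi$ it shows, using Lemma \ref{lem .2}, that $(\Z\Phi)^\circ\subseteq\Z\{\alpha/\langle\alpha,\alpha\rangle\,|\,\alpha\in\Phi\}$, and since $\langle\alpha/\langle\alpha,\alpha\rangle,2L\rangle\subseteq\Z$ for every screener $\alpha$, this lattice lies in $(2L)^\circ$; dualizing the resulting chain gives $2L\subseteq\Z\Phi$, and the general case is then handled through the splitting (\ref{splitting up L}). Nothing in your write-up plays the role of this containment of $(\Z\Phi)^\circ$ in the $\Z$-span of the vectors $\alpha/\langle\alpha,\alpha\rangle$ (note that the roots alone would not suffice here -- e.g.\ for $A_2$ the weight lattice is not contained in $\frac12 A_2$, and one genuinely needs the non-root screeners), so to complete your proof you would need either to supply that dual-lattice argument or an honest treatment of your third case.
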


\begin{proof} Note that if $A\subset B$, then $B^\circ\subset A^\circ$, and thus immediately we have $L_\Phi\subset L \subset L^\circ\subset L_{\Phi}^\circ$. Recalling the definition of $L_\Phi$ in (\ref{splitting up L}), we first show this when $\Z \Phi= L_\Phi$.  We have that for $\alpha$ a screener, $\langle \alpha/\langle\alpha,\alpha\rangle ,\beta\rangle\in \Z$ for any $\beta\in 2L$. So by Lemma \ref{lem .2} and using the definition $L_\Phi$ in (\ref{splitting up L}), we have that $(L_\Phi)^\circ=(\Z\Phi)^\circ\subset \Z\{\alpha/\langle\alpha,\alpha\rangle \,|\,\alpha\in \Phi\}$ with equality if and only if $L$ is a direct sum of rank $1$ lattices.  Therefore, $(L_\Phi)^\circ\subset \Z\{\alpha/\langle\alpha,\alpha\rangle \,|\,\alpha\in \Phi\}\subset (2L)^\circ$, proving $(i)$ in the case that $L_{\Phi}=\Z\Phi$. Then when $\Z\Phi \neq L_{\Phi}$, the construction of $L_\Phi$ in (\ref{splitting up L}) implies that $2L\subset L_\Phi$, and the rest of $(i)$ follows.

For $(ii)$, let $L_1$ and $L_2$ be lattices of the same rank with $L_1 \subset L_2$,  and let $A$ be a change of $\Z$-basis matrix from a $\Z$-basis for $L_1$ to a $\Z$-basis for $L_2$. Let $G_1$, $G_2$ be the Gram matrices with respect to these choices of $\Z$-base. Then it follows that $Det(G_2)=Det(A)^ 2 Det (G_1)$. 
Part $(ii)$ now follows from $(i)$.  
\end{proof}

\begin{rema}
	Lemma \ref{some lem} says nothing about the value of $|L_{\Phi}|/|L_{\overline{\Phi}}|$, which does not have to necessarily be a power of $4$. It is only necessarily true that $2L\subset L_{\overline{\Phi}}$ when $\overline{\Phi}$ is a $\Z$-basis for $L_\Phi$ as opposed to an arbitrary linearly independent subset.
\end{rema}
The next corollary gives another way of looking at lattices with screeners, and follows immediately from Lemma \ref{some lem}.
\begin{cor}
	Let $L$ be a positive definite integral lattice with at least one screener.  Then $L$ is an (integral) sublattice of the dual of a lattice which as a finitely generated group is of the form $L_\Phi= \Z\Phi \oplus L'$, where $\Phi$ is the set of screeners for $L$ and $L'$ is another lattice. 
\end{cor}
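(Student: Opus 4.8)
The statement is an immediate consequence of Lemma \ref{some lem} together with the construction of $L_\Phi$ in (\ref{splitting up L}), so the plan is simply to assemble the pieces and then verify that the inclusion is one of integral lattices. First I would invoke Lemma \ref{some lem}$(i)$, whose chain $2L\subset L_\Phi\subset L\subset L^\circ\subset (L_\Phi)^\circ\subset (2L)^\circ$ shows in particular that $L$ is a subgroup of $(L_\Phi)^\circ$; here the hypothesis that $L$ has at least one screener guarantees $\Phi\neq\emptyset$, so that $\Z\Phi$, $L_\Phi$, and the rational dual $(L_\Phi)^\circ$ are all genuinely at play and the conclusion is not vacuous.

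Next I would recall from (\ref{splitting up L}) that, as a finitely generated abelian group, $L_\Phi=\Z\Phi\oplus L_2$, where $L_2$ is the rank $d-r$ complement chosen so that $L=L_1\oplus L_2$ with $L_1$ the maximal rank $r$ sublattice of $L$ containing $\Z\Phi$. Setting $L'=L_2$ then gives exactly the asserted shape $L_\Phi=\Z\Phi\oplus L'$, with $\Phi$ the set of screeners of $L$.

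Finally, the one point that needs a word of care is the adjective \emph{integral}: the dual lattice $(L_\Phi)^\circ$ need not itself carry a $\Z$-valued form, so I would note that all of $2L$, $L_\Phi$, $L$, $(L_\Phi)^\circ$ sit inside the rational vector space $\Q\otimes_\Z L$ and inherit the restriction of the $\Q$-bilinear extension of $\langle\cdot,\cdot\rangle$; since $L$ is by hypothesis an integral lattice, this restriction is $\Z$-valued on $L$, so $L$ lies inside $(L_\Phi)^\circ$ precisely as an integral sublattice. There is no real obstacle here, as the entire content is in Lemma \ref{some lem}; the only thing one must be sure not to garble is that the embedding is into the \emph{dual} of $L_\Phi$ rather than into $L_\Phi$ itself, since it is $(L_\Phi)^\circ$, not $L_\Phi$, that contains $L$.
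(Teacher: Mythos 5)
Your proposal is correct and matches the paper's approach exactly: the paper states that the corollary follows immediately from Lemma \ref{some lem}, i.e., from the inclusion $L\subset (L_\Phi)^\circ$ in part $(i)$ together with the decomposition $L_\Phi=\Z\Phi\oplus L_2$ from (\ref{splitting up L}). Your additional remark that the embedding is integral because the form restricted to $L$ is $\Z$-valued is a reasonable clarification, but the argument is the same as the paper's.
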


The problem of finding screeners for an arbitrary lattice $L$ can be solved in three steps: The first is determining $\Z\Phi$ as a sublattice of $L$; the second is determining all screeners for $\Z\Phi$; and the third is determining which screeners for $\Z\Phi$ are screeners for $L$. The first step is the hardest and depends on the internal properties of $L$. 
The second step will be done below, and the third is a simple check when using Lemmas \ref{interaction of bilinear form} and \ref{basis screener results}. 

We now proceed to solve the second step, which amounts to studying lattices that are generated by screeners. For instance, an example of a lattice generated by screeners would be $L=\sqrt{p}K$, where $K$ is a simply laced root lattice. Note that there is an obvious bijection between screeners for $K$ and screeners for $\sqrt{p}K$, so in the simply laced case, we only need to study $\Phi$ for the simply laced root lattices $K$.  In fact, the following theorem shows that in the positive definite even case, lattices generated by a screening system must have a basis of screeners that give rise to an orthogonal direct sum of such (possibly rescaled) simply laced root lattices.    

\begin{thm} \label{L-even-classification-theorem}
	Let $L$ be a positive definite even lattice generated by a screening system $\overline{\Phi}$.  Then $L$ has a basis of screeners such that $L$ is an orthogonal direct sum of lattices $\sqrt{p_i}K_i$, where the $K_i$ are one of the simply laced root lattices $A_n, D_n, E_6, E_7, E_8$, for $n \in\mathbb{Z}_+$.
\end{thm}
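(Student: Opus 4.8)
The plan is to produce the desired basis from the given screening system $\overline{\Phi}=\{\alpha_1,\dots,\alpha_d\}$ — which, since it generates $L$, is a $\Z$-basis of $L$ — by an iterative ``norm-reduction'' procedure that stays within the class of screening systems generating $L$, and then to identify the resulting Gram matrix block by block with a rescaled simply laced Cartan matrix. No separate induction on rank is needed, because a single monovariant drives the reduction to termination.

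First I would isolate the basic move. Suppose the current screening system contains a pair $\alpha_i,\alpha_j$ with $\langle\alpha_i,\alpha_i\rangle<\langle\alpha_j,\alpha_j\rangle$ and $\langle\alpha_i,\alpha_j\rangle\ne 0$. Then Lemma~\ref{interaction of bilinear form} forces case~(iii), so $\langle\alpha_i,\alpha_j\rangle=\epsilon\langle\alpha_j,\alpha_j\rangle/2$ for some $\epsilon=\pm1$. Applying Lemma~\ref{basis screener results}(ii) to the pair $(\alpha_i,-\epsilon\alpha_j)$ — whose norms are correctly ordered and whose inner product equals $-\langle-\epsilon\alpha_j,-\epsilon\alpha_j\rangle/2$ — shows that $\gamma:=\alpha_i-\epsilon\alpha_j\in\Phi$, and a one-line computation gives $\langle\gamma,\gamma\rangle=\langle\alpha_i,\alpha_i\rangle$. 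Replacing $\alpha_j$ by $\gamma$ gives a set that is again a $\Z$-basis of $L$ (one has $\alpha_j=\epsilon\alpha_i-\epsilon\gamma$, so the change of basis is invertible over $\Z$), each member of which is a screener of $L$, hence a new screening system generating $L$; and it strictly decreases the positive integer $N:=\sum_{k=1}^{d}\langle\alpha_k,\alpha_k\rangle$. Since $N\in\Z_{+}$, after finitely many such moves I reach a screening system $\{\beta_1,\dots,\beta_d\}$ generating $L$ in which $\langle\beta_i,\beta_j\rangle\ne 0$ implies $\langle\beta_i,\beta_i\rangle=\langle\beta_j,\beta_j\rangle$.

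Next I would analyze this terminal basis. Form the graph with an edge between $\beta_i$ and $\beta_j$ exactly when $\langle\beta_i,\beta_j\rangle\ne 0$; since $\{\beta_i\}$ is a $\Z$-basis splitting into mutually orthogonal blocks, $L$ is the orthogonal direct sum of the sublattices $L_c$ spanned by the screeners in each connected component $c$. Within a component all the $\beta_i$ have a common norm $n_c$, which is even because $L$ is even, and by Lemma~\ref{interaction of bilinear form} (case~(iii) with $a=1$; case~(ii) is impossible for distinct basis vectors) every off-diagonal inner product is $0$ or $\pm n_c/2$. Hence the Gram matrix of $L_c$ in the basis $\{\beta_i:i\in c\}$ is $(n_c/2)C_c$, where $C_c$ is a positive definite, connected, symmetric integer matrix with $2$'s on the diagonal and entries in $\{0,\pm1\}$ off it. The lattice with Gram matrix $C_c$ is a positive definite even lattice generated by norm-$2$ vectors, hence (by the classification of such lattices, equivalently of connected simply laced Cartan matrices, with affine types excluded by positive definiteness) is the root lattice $K_c$ of one of $A_n,D_n,E_6,E_7,E_8$. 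Therefore $L_c\cong\sqrt{p_c}\,K_c$ with $p_c=n_c/2\in\Z_{+}$, and $L\cong\bigoplus_c\sqrt{p_c}\,K_c$, with $\{\beta_i\}$ (after, if one wishes, replacing each block's basis by a simple system of roots, which are again screeners) the promised basis of screeners.

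The step I expect to be the crux is the verification that the reduction move stays inside the class of screening systems \emph{generating $L$}: one needs $\alpha_i-\epsilon\alpha_j$ to be a screener of $L$ itself, not merely of the sublattice it spans, and this is exactly the content of Lemma~\ref{basis screener results}(ii), provided one first uses Lemma~\ref{interaction of bilinear form} to pin $\langle\alpha_i,\alpha_j\rangle$ to $\pm\langle\alpha_j,\alpha_j\rangle/2$ and feeds the norms into that lemma in the correct order. The remaining ingredients — termination via the monovariant $N$, the orthogonal splitting along connected components, and the appeal to the classification of ADE root lattices after normalizing the Gram matrix to have $2$'s on the diagonal — are routine.
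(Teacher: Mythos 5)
Your proposal is correct and follows essentially the same route as the paper: reduce a $\Z$-basis of screeners by replacing the longer vector $\alpha_j$ with $\alpha_j\mp\alpha_i$ (justified by Lemma \ref{interaction of bilinear form} pinning the inner product to $\pm\langle\alpha_j,\alpha_j\rangle/2$ and Lemma \ref{basis screener results}(ii) guaranteeing the new vector is again a screener of $L$), then split $L$ orthogonally into constant-norm blocks and identify each block with a rescaled ADE root lattice. Your explicit monovariant $N=\sum_k\langle\alpha_k,\alpha_k\rangle$ and the rescaled-Gram-matrix identification merely make the paper's termination and final classification steps more explicit; the argument is the same.
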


\begin{proof} Let $B= \{u_1,u_2,\dots,u_d\}\subset \Phi$ be a $\Z$-basis of screeners for $L$, ordered in such a way that $\langle u_i,u_i \rangle \leq \langle u_{i+1},u_{i+1} \rangle$. Suppose there is some (smallest) $l>1$ such that $\langle u_1,u_l \rangle < \langle u_{l},u_{l} \rangle$ and $\langle u_{1},u_{l} \rangle\neq 0$. Then from Lemma \ref{interaction of bilinear form}, $\langle u_1,u_1 \rangle$ divides $\langle u_{l},u_{l} \rangle$ and $\langle u_1,u_{l} \rangle= \pm \langle u_{l},u_{l} \rangle/2$. Then we can form a new $\Z$-basis $\hat{B}$ of screeners by replacing $u_{l}$ with $\hat{u}_{l}=u_{l}\mp  u_1$.

Note that $\hat{u}_{l}$ is also a screener, and that $\langle u_{1},u_{1}\rangle =\langle \hat{u}_{l},\hat{u}_{l}\rangle$. We can then reorder $\hat{B}=\{u_1,\dots,u_n\}$ so that it takes the form $\langle u_i,u_i \rangle \leq \langle u_{i+1},u_{i+1} \rangle$ as before for $u_i\in B$.  Now repeat this process again for $u_1$ until there is no smallest $l>1$ as above, and likewise for $u_2,\dots u_{d-1}$, and one arrives at a $\Z$-basis of screeners satisfying $\langle u_i,u_j\rangle=0$ if $\langle u_i,u_i\rangle\neq \langle u_j,u_j\rangle$. Then $L$ decomposes as an orthogonal direct sum of lattices $\bigoplus_{k = 1}^m L_k$, for some $m\in \mathbb{Z}_+$ where each $L_k$ is generated by screeners of the same norm squared equal to $2p_k$ for some positive integer $p_k$.  It follows from the theory of root lattices \cite{H} and Lemmas \ref{interaction of bilinear form} and \ref{basis screener results}  that each $L_k$ is of the form $L_k =\bigoplus_{j=1}^r \sqrt{p_k} K_{k_j}$, for some $r \in \mathbb{Z}_+$, where $K_{k_j}$ are simply laced root lattices. 
\end{proof}

We are now in the position to completely classify all rank 2 positive definite integral lattices, which we do in Section 5.1 below, as well as all positive definite even lattices generated by a screening system, which we do in Section 5.2 below, along with the application of Theorem \ref{L-even-classification-theorem}, by classifying all screeners in a simply laced root lattice.

\subsection{Classification of screeners for rank 2 positive definite integral lattices}

\begin{thm} \label{classification rank 2}  Let $p\in\Z_+$.  Every rank 2 positive definite integral lattice with at least one screener has a $\Z$-basis with Gram matrix of the following two types: 

   Type 1 : $\begin{pmatrix} 2p & 0  \\ 0 & m \end{pmatrix}$ with $m\neq 2p$, $m\in \Z_+$.
 
  Type 2:   $\begin{pmatrix} 2p &-p  \\ -p & m \end{pmatrix}$ with $m\geq p$.

The Type 2 lattices coincide with the set of rank 2 positive definite even integral lattices that are generated by elements of the same norm.  

The set of screeners $\Phi$ for these two types of rank 2 positive definite integral lattices are classified as follows:\\

Type 1:   If $m\in 2\Z+1$, then $\Phi=\{\pm\alpha_1\}$, and if $m\in 2\Z$, then $\Phi= \{\pm\alpha_1, \ \pm\alpha_2\}$.\\

Type 2:  Set $\bar{\Phi}= \{\pm \alpha_1,\ \pm (\alpha_1+2\alpha_2)\}$.   There are 3 subtypes:\\

2(a) If $m\neq2p$ and $m\neq p$, then $\Phi= \bar{\Phi}$ and $$\Z\Phi=\Z\bar{\Phi}= \Z\alpha_1\oplus \Z(\alpha_1+2\alpha_2)= \sqrt{2p}\Z\oplus \sqrt{4m-2p}\Z.$$ 

2(b) If $m= p$, then  $\Z \Phi = L=\Z(\alpha_1+\alpha_2)\oplus  \Z\alpha_2 = (\sqrt{p}\Z)^2 = (\sqrt{p/2} A_1)^2$, and
$\Phi=\bar{\Phi}\cup \{\pm\alpha_2, \ \pm(\alpha_1+\alpha_2)\}$, the total set of screeners giving the roots of the classical root system $B_2 = C_2$, rescaled by $\sqrt{p/2}$.   \\

2(c) If $m=2p$, then $\Z \Phi = L=\sqrt{p}A_2=\sqrt{p}D_2$, and $\Phi=  \bar{\Phi}\cup  \{ \pm \alpha_2, \ \pm(\alpha_1+\alpha_2), \ \pm( \alpha_1-\alpha_2), \ \pm(2\alpha_1+\alpha_2)\}$, the total set of screeners giving the roots of the exceptional root system $G_2$, rescaled by $\sqrt{p}$.
\end{thm}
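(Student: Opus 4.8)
The plan is to split the argument into two halves: first the reduction of an arbitrary rank $2$ positive definite integral lattice $L$ carrying a screener to one of the two Gram-matrix normal forms, and second the enumeration of $\Phi$ within each form. For the first half I would start with a screener $\alpha_1 \in \Phi$ with $\langle \alpha_1,\alpha_1\rangle = 2p$; by Lemma \ref{Z-base with screener} there is a $\Z$-basis of $L$ containing $\alpha_1$, say $\{\alpha_1,\alpha_2\}$. Write $\langle \alpha_1,\alpha_2\rangle = c$ and $\langle \alpha_2,\alpha_2\rangle = n$. Since $2\alpha_1/\langle\alpha_1,\alpha_1\rangle = \alpha_1/p \in L^\circ$, pairing against $\alpha_2$ forces $c/p \in \Z$, i.e.\ $p \mid c$. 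Replacing $\alpha_2$ by $\alpha_2 + k\alpha_1$ for suitable $k \in \Z$ (a valid change of $\Z$-basis, preserving $\langle\alpha_1,\alpha_1\rangle$) lets me normalize $c$ into a residue system modulo $2p$, and positive definiteness together with the Cauchy--Schwarz-type bound $c^2 < 2pn$ (and a sign flip $\alpha_2 \mapsto -\alpha_2$) cuts this down to $c = 0$ or $c = -p$: the former gives Type~1, the latter Type~2. In the Type~2 case the constraint $m \ge p$ comes from positive definiteness $\det\begin{pmatrix}2p & -p\\ -p & m\end{pmatrix} = 2pm - p^2 > 0$ together with the reduction of $m$ modulo the lattice automorphisms that fix $\alpha_1$; and the identification of Type~2 with the rank $2$ positive definite even lattices generated by equal-norm vectors follows because $\alpha_1$ and $\alpha_1 + 2\alpha_2$ both have norm-squared $2p$ in subtypes where $m$ allows it, while conversely any even lattice $\Z u \oplus' \cdots$ generated by two equal-norm vectors $u, v$ with $\langle u,v\rangle = -p$ (the generic angle forced by Lemma \ref{interaction of bilinear form}) is already in this form — I would record $m = 4\,\mathrm{(something)}$ carefully here. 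I should double-check the excerpt's claim that $m \ge p$ rather than a strict inequality and phrase the even/equal-norm correspondence to match exactly.

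For the enumeration within Type~1: since $L = \Z\alpha_1 \oplus \Z\alpha_2$ is an orthogonal direct sum, $L^\circ = \frac{1}{2p}\Z\alpha_1 \oplus \frac{1}{m}\Z\alpha_2$, and a screener $\beta = a\alpha_1 + b\alpha_2$ must satisfy $2\beta/\langle\beta,\beta\rangle \in L^\circ$ together with $\beta \notin 2L$ and $\langle\beta,\beta\rangle$ even. Pairing $2\beta/\langle\beta,\beta\rangle$ against $\alpha_1$ and $\alpha_2$ forces $\langle\beta,\beta\rangle = 2pa^2 + mb^2$ to divide both $4pa$ and $2mb$; a short divisibility analysis (using $\gcd$ considerations and the size bound $2pa^2 + mb^2 \ge \max$ of the relevant quantities) shows the only solutions are $\beta = \pm\alpha_1$ always, and $\beta = \pm\alpha_2$ exactly when $m = \langle\alpha_2,\alpha_2\rangle$ is even — which is precisely the stated dichotomy on $m \bmod 2$. (When $m = 2p$ is excluded by hypothesis in Type~1, so no off-axis screeners appear; I should confirm nothing like $\alpha_1 \pm \alpha_2$ sneaks in, which it cannot since $\langle\alpha_1,\alpha_2\rangle = 0$ and the norms differ unless $m = 2p$, using Lemma \ref{basis screener results}(iii).)

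For Type~2, I would first verify $\alpha_1$ and $\alpha_1 + 2\alpha_2$ are screeners (both have norm-squared $2p$ and $2(\alpha_1+2\alpha_2)/(2p) = \alpha_1/p + 2\alpha_2/p \in L^\circ$ since $p \mid \langle\alpha_1+2\alpha_2, \alpha_i\rangle$), establishing $\bar\Phi \subseteq \Phi$, and note $\Z\bar\Phi = \Z\alpha_1 \oplus \Z(\alpha_1 + 2\alpha_2)$ is an orthogonal direct sum with norms $2p$ and $\langle\alpha_1+2\alpha_2,\alpha_1+2\alpha_2\rangle = 2p - 4p + 4m = 4m - 2p$, giving the claimed $\sqrt{2p}\,\Z \oplus \sqrt{4m-2p}\,\Z$. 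Then the three subtypes are distinguished by when additional screeners appear: writing a general $\beta = a\alpha_1 + b\alpha_2$ and imposing the screener conditions gives a quadratic Diophantine constraint on $(a,b)$ whose solution set jumps precisely at $m = p$ and $m = 2p$. For $m = p$, one checks $\alpha_2$ (norm $p$) — wait, $\langle\alpha_2,\alpha_2\rangle = m = p$ need not be even, so I instead track that $L = \sqrt{p}\,A_1 \perp \sqrt{p}\,A_1$ rescaled, recompute in the $A_1^2$ coordinates where the screeners become the $B_2 = C_2$ roots (the four short roots being the $\pm\alpha_2, \pm(\alpha_1+\alpha_2)$ of norm-squared $p$ and the four long ones being $\bar\Phi$ of norm-squared $2p$), and invoke Lemma \ref{basis screener results}(iii) to confirm each candidate genuinely lies in $\Phi$ (the orthogonality-plus-equal-norm-plus-not-in-$2L$ test). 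For $m = 2p$, $L = \sqrt{p}\,A_2$ and I would just exhibit the six $\pm$-pairs listed, verifying each via $2\beta/\langle\beta,\beta\rangle \in L^\circ$, and then argue maximality — no further screeners — by the norm bound: any screener in $\sqrt{p}\,A_2$ has norm-squared in $\{2p, 6p\}$ (the $A_2$ and $G_2$-long lengths), and enumerating lattice vectors of those two norms in $A_2$ gives exactly $6 + 6 = 12$ vectors, the $G_2$ root system.

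The main obstacle I anticipate is the Type~2 enumeration bookkeeping: cleanly showing that the screener set is \emph{exactly} $\bar\Phi$ (no more) in subtype 2(a), and \emph{exactly} the $B_2$/$G_2$ root set (no more) in 2(b)/2(c), requires a careful finite case analysis of which $(a,b) \in \Z^2$ satisfy the divisibility conditions $\langle\beta,\beta\rangle \mid 4\langle\beta,\alpha_i\rangle$ with $\langle\beta,\beta\rangle$ even and $\beta \notin 2L$, and the temptation is to wave hands. The cleanest route is probably to bound $\langle\beta,\beta\rangle$ from above using Lemma \ref{divides deg G} (it divides $\det G_L = 4pm - p^2 = p(4m-p)$) combined with Lemma \ref{interaction of bilinear form} restricting the possible norm ratios and angles between $\beta$ and the known screeners $\alpha_1, \alpha_1+2\alpha_2$, which reduces the search to finitely many norm values handled by inspection. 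I would also need to be careful that the "$m \ne 2p$, $m \ne p$" hypotheses in 2(a) are exactly what prevents extra screeners, and that the boundary cases land in 2(b), 2(c) as stated.
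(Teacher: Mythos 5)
Your overall strategy follows the paper's: Lemma \ref{Z-base with screener} gives a basis containing the screener $\alpha_1$, the condition $\alpha_1/p\in L^\circ$ forces $p\mid\langle\alpha_1,\alpha_2\rangle$, and shearing $\alpha_2$ by multiples of $\alpha_1$ reduces the off-diagonal entry to $0$ or $-p$; your Type~1 divisibility enumeration is sound, and your norm-count in $\sqrt{p}A_2$ for 2(c) is a clean repackaging of the paper's case analysis (provided you note that $A_2$ represents no vectors of norm $4$, so the value $4p$ allowed by Lemma \ref{2p 4p 6p} cannot occur). But there are genuine gaps. First, your justification of the claim that Type~2 lattices are exactly the even rank-$2$ lattices generated by equal-norm vectors fails as written: $\alpha_1$ and $\alpha_1+2\alpha_2$ have norms $2p$ and $4m-2p$, which coincide only when $m=p$. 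The correct move (the paper's) is the change of basis $\{\alpha_1+\alpha_2,\alpha_2\}$, turning the Type~2 Gram matrix into $\begin{pmatrix} m & m-p\\ m-p & m\end{pmatrix}$; the same observation is what absorbs the orthogonal case $m=2p$ (which your reduction produces but which Type~1 excludes) into Type~2, namely $\mathrm{diag}(2p,2p)$ is the Type~2 lattice with parameters $(4p,\,m'=2p)$. Your proposal never addresses this, yet without it the asserted dichotomy of Gram matrices is not proved. Also, $m\geq p$ is not obtained from automorphisms fixing $\alpha_1$: one passes to the basis $\{\alpha_1+2\alpha_2,\,-\alpha_1-\alpha_2\}$, which exchanges $p$ with $2m-p$.

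Second, the heart of the theorem---that the Type~2 screener sets are \emph{exactly} as listed, in particular that no screeners beyond $\bar\Phi$ occur when $m\neq p,2p$ and that precisely the listed extras appear at $m=p$---is deferred to an unexecuted ``careful finite case analysis.'' The tools you name are the right ones and are exactly the paper's: writing $\alpha=n_1\alpha_1+n_2\alpha_2$ and comparing $\langle\alpha,\alpha_1\rangle=(2n_1-n_2)p$, $\langle\alpha,\alpha_1+2\alpha_2\rangle=n_2(2m-p)$, and $\langle\alpha,\alpha\rangle$ with the finitely many values permitted by Lemma \ref{interaction of bilinear form} pins down $n_2\in\{1,2,3\}$ up to sign and forces $m\in\{p,2p\}$ whenever a non-$\bar\Phi$ solution exists (Lemma \ref{basis screener results}(i) eliminating the $n_2=2,3$ cases). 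So the plan would succeed, but as written the exactness in 2(a) and 2(b) is asserted rather than proved; in particular for 2(b) you still need the easy orthogonal-coordinate computation in $(\sqrt{p}\Z)^2$ showing no ninth screener exists. You correctly flagged, but did not resolve, the parity of $\langle\alpha_2,\alpha_2\rangle=m=p$ in 2(b); the evenness requirement of Definition \ref{screener defn} should be checked for the norm-$m$ candidates (the paper glosses the same point, so this is not counted heavily against you).
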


\begin{proof}
 By Lemma \ref{Z-base with screener}, every rank 2 positive definite integral lattice with at least one screener $\alpha_1$ satisfying $\langle\alpha_1,\alpha_1\rangle=2p$ for some $p\in \Z_+$ has a $\Z$-basis $\{\alpha_1,\alpha_2\}$. Then from Definition \ref{screener defn}, $\langle \alpha_1,L\rangle\in p\Z$, and so the $\Z$-basis $\{\alpha_1,\alpha_2\}$ for $L$ has a Gram matrix of the form $G=\begin{pmatrix} 2p & -np  \\ -np & c \end{pmatrix}$ 
 for some $n\in \Z$ and $c\in \Z_+$ satisfying $\mbox{Det}(G)>0$, which implies $2c-n^2p> 0$. 

If $n\in 2\Z$, we can replace $\alpha_2$ with $\alpha_2+ n\alpha_1/2$, giving a Gram matrix of the form $\begin{pmatrix} 2p & 0  \\ 0 & m \end{pmatrix}$ where $m= c-n^2p/2$, and so $L$ decomposes as $L=\sqrt{2p}\Z\oplus \sqrt{m}\Z$, giving Gram matrices of Type 1, unless $m = 2p$, which we shall see below, can be included in Type 2. 

If $n$ is odd, then we replace $\alpha_2$ with $\alpha_2+ (n-1) \alpha_1/2$ to get a Gram matrix of the form $\begin{pmatrix} 2p & -p  \\ -p & m \end{pmatrix}$ for some $m\in\Z_+$, and $\mbox{Det}(G)= p(2m-p)$ satisfying $m> p/2$.  If $m\leq p$, we can obtain a new $\Z$-basis $\{\hat{\alpha}_1,\hat{\alpha}_2\}$ by setting $\hat{\alpha}_1=\alpha_1+2\alpha_2$ and $\hat{\alpha_2}=-\alpha_1-\alpha_2$, giving a Gram matrix of the form $\begin{pmatrix} 2p' & -p'  \\ -p' & m \end{pmatrix}$, where $p'=2m-p$ and $m\geq p'$. And so without loss of generality, in the $n$ odd case, we can just consider the Gram matrices satisfying $m\geq p$, in particular the Type 2 Gram matrices in the theorem. This also shows that $\alpha_1+2\alpha_2$ is always a screener in the Type 2 case.  

Performing a change of $\Z$-basis on the Gram matrix $\begin{pmatrix} 2p & -p  \\ -p & m \end{pmatrix}$ by replacing $\alpha_1$ with $\alpha_1+\alpha_2$ gives the Gram matrix  $\begin{pmatrix} m & m-p  \\ m-p & m \end{pmatrix}$, proving that Type 2 lattices coincide with the set of rank 2 positive definite even lattices that are generated by two elements of the same norm.  In particular, this encompasses the case of $m=2p$ from Type 1 by considering the case of $m = p \in 2\Z$. 

This completes the proof of the first two statements of the theorem.  

 Next we determine the screeners for lattices with a Type 1 Gram matrix. Suppose $\alpha$ is a screener and set $\alpha=n_1\alpha_1 +n_2\alpha_2$. Suppose $n_1\neq 0$ and replace $\alpha$ with $-\alpha$ as necessary so that $n_1$ is positive. Then $\langle\alpha_1,\alpha\rangle=2n_1 p $ and  by Lemma \ref{interaction of bilinear form}, we must have that $\langle\alpha,\alpha_1\rangle = \pm p,2p,3p$, so that the only possibility is $\langle\alpha,\alpha_1\rangle=2p$ with $n_1=1$. Then again by Lemma 5.5, we have that $\langle\alpha,\alpha\rangle=4p$. Equating this to $\langle\alpha,\alpha\rangle = 2p + n_2^2 m$, we must have that $n_2 m= 2p/n_2$. From the definition of screeners, $\langle\alpha,L\rangle\in 2p\Z$ and in particular $\langle\alpha,\alpha_2\rangle=  n_2 m\in 2p\Z$. This forces $n_2=\pm1$ and $m=2p$, which is not a Type 1 Gram matrix. Therefore, lattices with Gram matrices of Type 1 have $\Phi=\{ \pm\alpha_1\}$ if $m\in 2\Z+1$ and $\Phi= \{\pm\alpha_1,\pm \alpha_2 \}$ if $m\in 2\Z$. This completes the classification of screeners for Type 1.

Next we determine the screeners for lattices with a Type 2 Gram matrix. Set $\hat{\alpha}_2= \alpha_1+2\alpha_2$.  We already have that $\pm\alpha_1,\pm \hat{\alpha}_2\in \Phi.$  
Suppose $\alpha\in \Phi$ with $\alpha\notin \{\pm\alpha_1,\pm \hat{\alpha}_2\}$. By positive definiteness, at least one of $\langle \alpha_1,\alpha\rangle$  and $\langle\hat{\alpha}_2,\alpha\rangle$ are nonzero. Furthermore, Lemma \ref{interaction of bilinear form} implies that if $\langle \alpha,\alpha_1\rangle$  is nonzero, then two situations can happen:
\begin{align} \langle\alpha,\alpha_1\rangle &=\pm p a_1,\mbox{ and } \langle\alpha,\alpha\rangle= 2 pa_1, \, a_1= 2,3 \label{alpha 1A} \\
  \langle\alpha,\alpha_1\rangle&= \pm p, \mbox{ and } \langle\alpha,\alpha\rangle=2 p/\hat{a}_1,\, \hat{a}_1=1, 2,3 . \label{alpha 1B}
\end{align} 
 Similarly, if  $\langle \alpha,\hat{\alpha}_2\rangle$  is nonzero, then we have the two scenarios
 \begin{align} 
 \langle\alpha,\hat{\alpha}_2\rangle &=\pm (2m-p) a_2,\mbox{ and } \langle\alpha,\alpha\rangle=  2(2m-p) a_2 , \, a_2= 2,3 \label{alpha 2A} \\
\langle\alpha,\hat{\alpha}_2\rangle &=\pm  (2m-p), \mbox{ and } \langle\alpha,\alpha\rangle=2 (2m-p)/\hat{a}_2,\, \hat{a}_2= 1,2,3. \label{alpha 2B}
\end{align} 
On the other hand, if we write $\alpha=n_1 \alpha_2 + n_2 \alpha_2$, then the following three equalities hold
\begin{align} 
\langle \alpha,\alpha_1 \rangle&= (2n_1-n_2)p \label{alpha 3A} \\
\langle \alpha,\hat{\alpha}_2 \rangle&= n_2 (2m-p) \label{alpha 3B}\\
\langle \alpha , \alpha \rangle &= 2n_1^2 p - 2n_1 n_2 p + n_2^2 m .\label{alpha 3C}
\end{align}

Throughout the remainder of this proof, we replace $\alpha$ with $ -\alpha$ when necessary so that $\langle\alpha,\hat{\alpha}_2\rangle\geq 0$.

Assume $\langle\alpha_1,\alpha\rangle= 0$. Then (\ref{alpha 3A}) gives $n_2=2 n_1$.  Lemma \ref{basis screener results}$(i)$ implies that $n_1=\pm 1$, and thus $\alpha= \pm \hat{\alpha}_2$, which contradicts our assumption that $\alpha \notin \{ \pm \alpha_1, \pm \hat{\alpha}_2\}$. On the other hand if $\langle\alpha_2,\alpha\rangle=  0$, then (\ref{alpha 3B}) implies $n_2=0$, and the only possibility is $\alpha=\pm \alpha_1$, again a contradiction.  We can thus assume both $\langle\alpha_1,\alpha\rangle$ and $\langle\hat{\alpha}_2,\alpha\rangle$ are nonzero.

First assume $\langle\alpha,\hat{\alpha}_2\rangle = (2m-p)$. Then (\ref{alpha 2B}) implies $\langle \alpha,\alpha\rangle= 2 (2m-p)/\hat{a}_2$, where $\hat{a}_2 =1,2,3$.  Then (\ref{alpha 3B}) implies $n_2 = 1$, and (\ref{alpha 3A}) implies $\langle \alpha,\alpha_1\rangle= (2n_1-n_2)p= (2n_1- 1) p$, and so by (\ref{alpha 1A}) and (\ref{alpha 1B}),  $n_1$ can only possibly be $0,\,\pm 1,\, 2$.

Suppose $n_1= 0,1$. In either case, by (\ref{alpha 3C}), $\langle \alpha , \alpha \rangle = 2n_1 p (n_1 - 1) + m$, and so we have that $\langle\alpha,\alpha\rangle= m$.  In addition, $n_2 = 1$ and $n_1 = 0, 1$, along with (\ref{alpha 3B}) implies $\langle \alpha, \alpha_1 \rangle = \pm p$, and so using (\ref{alpha 1B}) and (\ref{alpha 2B}), we have  
\[m=2p/ \hat{a}_1=(4m-2p)/\hat{a}_2.\]

Using the second expression for $m$, we have $m= 2p/(4- \hat{a}_2)$, and then setting this equal to $2p/ \hat{a}_1$, gives $\hat{a}_1+\hat{a}_2=4$. Since we are assuming $m\geq p$, this leaves the two cases: $\hat{a}_1=1$, corresponding to the case when $m=2p$; and $\hat{a}_1=2$, corresponding to the case when $m=p$.  In both cases, i.e., when $m = 2p$ and $m = p$, we have that $\pm \alpha_2$ and $\pm (\alpha_1+\alpha_2)$ are possible screeners, and then since $2\alpha/\langle\alpha,\alpha\rangle\in L^\circ$, these are indeed screeners.

Now suppose $n_1=2$ or $-1$. Equation (\ref{alpha 3A}) gives $\langle \alpha,\alpha_1\rangle =\pm 3p$, and using (\ref{alpha 1A}), (\ref{alpha 2B}), and (\ref{alpha 3C}), we have
\[\langle \alpha,\alpha\rangle=6p= (4m-2p)/\hat{a}_2= 4p+m.\]
This gives $\hat{a}_2=1$, and $m=2p$. Therefore, $\pm(2\alpha_1+\alpha_2)$ and $\pm(\alpha_1-\alpha_2)$ are possible screeners when $m=2p$. Verifying that $2\alpha/\langle\alpha,\alpha\rangle\in L^\circ$, we see that these are indeed screeners.

Now assume $\langle\alpha,\hat{\alpha}_2\rangle = 2 (2m-p)$, i.e. $n_2=2$. But then (\ref{alpha 1A})--(\ref{alpha 2A}), and (\ref{alpha 3A}) imply $n_1=0, 2$.  But since $\pm \alpha_1$ is already a screener, the case $n_1 = 0$ contradicts Lemma \ref{basis screener results}$(i)$.  The case $n_1 = 2$, would give $\alpha = 2 \alpha_1 + 2 \alpha_2$ as a screener with (\ref{alpha 1A}) and (\ref{alpha 3C}) implying $m = p$.  But in this case $\pm (\alpha_1 + \alpha_2)$ has already been determined to be a screener, again contradicting Lemma \ref{basis screener results}$(i)$.  Thus this leads to no additional screeners.

 Now assume $\langle\alpha,\hat{\alpha}_2\rangle = 3 (2m-p)$, i.e. $n_2=3$. Then (\ref{alpha 1A})--(\ref{alpha 2A}),  (\ref{alpha 3A}), and (\ref{alpha 3C}) imply $n_1=0,1,2,3$  and 
 \[\langle\alpha,\alpha\rangle = 6(2m-p)= 9m + 2p(n_1^2-3n_1).\] 
For the cases when $n_1 = 1,2$, the equation above  implies $m=2p/3$, which contradicts the assumption that $m\geq p$.  For the cases when $n_1 = 0, 3$, the equation above implies that $m = 2p$ and $\alpha = 3 \alpha_2$ and $3(\alpha_1 + \alpha_2)$, respectively, which contradicts  Lemma \ref{basis screener results}$(i)$, since we have already determined that in this case $\alpha_2$ and $\alpha_1 + \alpha_2$ are screeners. Thus we get no new screeners here, and this completes the classification of screeners for Type 2.

To see that in the case 2(b), the set of screeners is the set of roots for $B_2 = C_2$, we note that the basis $\alpha_1$, $\alpha_2$ gives the Cartan matrix
\[\left( \begin{array}{rr}
2 & -2 \\
-1 & 2 
\end{array} \right) \]
giving the $B_2 = C_2$ root system.   And for the case 2(c), we see that the extra nonroot screeners in $D_2 = A_2$ give the $G_2$ root lattice via the correspondence $\alpha \longleftrightarrow  \alpha_1$ and $\beta \longleftrightarrow  \alpha_2 - \alpha_1$, for $\alpha, \beta$ a simple set of roots for $G_2$ with Cartan matrix
\[\left(\begin{array}{rr}
2 & -1\\
-3 & 2 
\end{array}
\right).
\] 
\end{proof}

\subsection{Classification of screeners for $L=\sqrt{p}K$ with $K$ a simply laced root lattice generated by screeners}

First we prove the following lemma:

\begin{lem} \label{2p 4p 6p}
Let $L$ be a lattice generated by a system of screeners all of the same length, i.e., $L=\sqrt{p}K$, for $K$ a simply laced root lattice. Then every screener in $L$ has norm squared $2p$, $4p$, or $6p$.
\end{lem}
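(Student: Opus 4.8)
The plan is to reduce at once to the case $p=1$. As remarked just before the lemma, dilation by $\sqrt{p}$ gives a bijection between the screeners of $K$ and those of $L=\sqrt{p}K$: this map sends $K$ onto $L$, multiplies the bilinear form by $p$, sends $K^\circ$ onto $\tfrac{1}{\sqrt{p}}K^\circ=L^\circ$, and sends $2\gamma/\langle\gamma,\gamma\rangle$ to $\tfrac{1}{\sqrt{p}}\cdot 2\gamma/\langle\gamma,\gamma\rangle$. Since a simply laced root lattice is even, so is $L$, hence the requirement ``$\beta\in L^{(0)}$'' in Definition \ref{screener defn} is automatic; thus $\gamma$ is a screener of $K$ if and only if $\sqrt{p}\gamma$ is a screener of $L$, and $\langle\sqrt{p}\gamma,\sqrt{p}\gamma\rangle=p\langle\gamma,\gamma\rangle$. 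It therefore suffices to prove that every screener of $K$ has norm squared in $\{2,4,6\}$.

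Let $\beta$ be a screener of $K$. Because $K$ is even, $\langle\beta,\beta\rangle$ is a positive even integer, so $\langle\beta,\beta\rangle\geq 2$; if $\langle\beta,\beta\rangle=2$ we are done, so assume $\langle\beta,\beta\rangle>2$. Since $K$ is the $\Z$-span of its roots (the norm-$2$ vectors) and $\beta\neq 0$, positive definiteness forces the existence of a root $\alpha$ with $\langle\alpha,\beta\rangle\neq 0$. Every root is a screener: it lies in $K^{(0)}$, it is not in $2K$ (its norm is $2$), and $2\alpha/\langle\alpha,\alpha\rangle=\alpha\in K\subset K^\circ$. Hence $\alpha,\beta\in\Phi$ with $\langle\alpha,\alpha\rangle=2\leq\langle\beta,\beta\rangle$, and Lemma \ref{interaction of bilinear form} applies to the ordered pair $(\alpha,\beta)$.

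Among the three alternatives of Lemma \ref{interaction of bilinear form}, case $(i)$ is excluded because $\langle\alpha,\beta\rangle\neq 0$, and case $(ii)$ is excluded because it would force $\alpha=\pm\beta$, contradicting $\langle\alpha,\alpha\rangle=2<\langle\beta,\beta\rangle$. Therefore case $(iii)$ holds, giving $\langle\beta,\beta\rangle=a\langle\alpha,\alpha\rangle=2a$ with $a\in\{1,2,3\}$; as $\langle\beta,\beta\rangle>2$, we must have $a\in\{2,3\}$, i.e.\ $\langle\beta,\beta\rangle\in\{4,6\}$. Together with the case $\langle\beta,\beta\rangle=2$, this shows every screener of $K$ has norm squared in $\{2,4,6\}$, hence every screener of $L=\sqrt{p}K$ has norm squared in $\{2p,4p,6p\}$. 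I do not foresee a genuine difficulty here; the only point requiring a little care is the clean transfer of the screener conditions under rescaling, which works precisely because root lattices are even, so the $L^{(0)}$ condition is free.
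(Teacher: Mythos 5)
Your proof is correct and follows essentially the same route as the paper: after the (optional) rescaling to $p=1$, you pair the screener with a minimal-norm generator (a root, i.e.\ a rescaled-root basis screener in the paper's version) having nonzero inner product with it, and read off $a=1,2,3$ from Lemma \ref{interaction of bilinear form}, exactly as the paper does directly in $\sqrt{p}K$. The extra reduction step via the rescaling bijection is fine and is already justified by the remark preceding the lemma.
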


\begin{proof}
Let $L$ be as in the Lemma.  Then any $\alpha\in \sqrt{p}K$ has $\langle\alpha,\alpha\rangle\geq 2p$ and there exists a basis of screeners $S=\{\alpha_1,\cdots \alpha_d\}$ satisfying $\langle\alpha_i,\alpha_i\rangle =2p$. Let $\alpha$ be another screener. By positive definiteness, at least one of $\langle \alpha,\alpha_i\rangle$ is nonzero, and by Lemma \ref{interaction of bilinear form}, $\langle \alpha,\alpha\rangle= a \langle \alpha_i,\alpha_i\rangle= 2p a$ for $a=1,2,3$. 
\end{proof}

\begin{thm} \label{classification A}   For the root lattices $E_6$, $E_7$, $E_8$ and $A_n$ for $n>3$,  the set of all screeners is just the set of roots. 

The set of screeners for $A_3$ is the set of roots in addition to 6 screeners that are not roots of $A_3$ that satisfy $\langle \alpha,\alpha\rangle =4$ and are given by  $$\{\pm(\alpha_1+\alpha_3), \ \pm(\alpha_1-\alpha_3), \ \pm(\alpha_1+2\alpha_2+\alpha_3)\}.$$ 
Furthermore the set of roots along with nonroot screeners of $A_3$ give the roots of $C_3$.  
\end{thm}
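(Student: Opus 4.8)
The plan is to reduce the theorem to deciding which vectors of norm $4$ and norm $6$ in a simply laced root lattice $K$ (normalized so that roots have norm $2$) are screeners. By Lemma \ref{2p 4p 6p} every screener has norm $2$, $4$, or $6$, and every root is a screener: a norm-$2$ vector lies in $K\subseteq K^\circ$, and it cannot lie in $2K$ since an even lattice has no vector of norm $1$. For a vector $\alpha$ of norm $4$, Definition \ref{screener defn} gives that $\alpha$ is a screener iff $\alpha/2\in K^\circ$ and $\alpha\notin 2K$; the second condition is automatic (it would force a norm-$1$ vector), so $\alpha$ is a screener iff $\langle\alpha,\beta\rangle\in 2\Z$ for every $\beta$ in a fixed root basis of $K$. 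Likewise a norm-$6$ vector $\alpha$ is a screener iff $\langle\alpha,\beta\rangle\in 3\Z$ for all $\beta$ in a root basis (here $\alpha\notin 2K$ is vacuous since $6\notin 4\Z$).

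The first step is to eliminate as many cases as possible with Lemma \ref{divides deg G}: if $\Phi$ contains a vector of norm $2d$ then $d$ divides $\det G_K$. Since $\det G_{E_8}=1$, every screener of $E_8$ has norm $2$; since $\det G_{E_7}=2$, $E_7$ has no norm-$6$ screener; since $\det G_{E_6}=3$, $E_6$ has no norm-$4$ screener; and since $\det G_{A_n}=n+1$, $A_n$ has no norm-$4$ screener when $n$ is even and no norm-$6$ screener unless $3\mid n+1$, so in particular $A_3$ (determinant $4$) has no norm-$6$ screener.

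The remaining cases I would settle by explicit coordinates. Realizing $A_n=\{x\in\Z^{n+1}:\sum x_i=0\}$ with simple roots $e_{i-1}-e_i$, the norm-$4$ vectors are exactly the sign-permutations of $(1,1,-1,-1,0,\dots,0)$, and the norm-$6$ vectors are the sign-permutations of $(2,-1,-1,0,\dots)$ and of $(1,1,1,-1,-1,-1,0,\dots)$. When $n>3$ there are at least five coordinates, so every such vector has a zero coordinate $j$ together with a $\pm 1$ coordinate $i$, and $\langle\alpha,e_i-e_j\rangle=\pm1$ violates both divisibility conditions; hence the only screeners of $A_n$, $n>3$, are roots. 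For $A_3$ there are exactly six norm-$4$ vectors, which in the simple-root basis are $\pm(\alpha_1+2\alpha_2+\alpha_3)$, $\pm(\alpha_1+\alpha_3)$, $\pm(\alpha_1-\alpha_3)$; each pairs evenly with $\alpha_1,\alpha_2,\alpha_3$, hence each is a screener, and together with the $12$ roots and the absence of norm-$6$ screeners this gives $\Phi$ for $A_3$. For $E_7$ I would use that $E_7=E_8\cap v^{\perp}$ for a root $v$, so $E_7^\circ$ is the orthogonal projection of the unimodular lattice $E_8$ onto $v^{\perp}$; for $y\in E_8$ this projection has norm $\langle y,y\rangle-\tfrac12\langle y,v\rangle^2$, and reducing modulo $4$ (using that $\langle y,y\rangle$ is even and $\langle y,v\rangle^2\equiv0$ or $1\pmod4$) shows the value $1$ is never attained, so $E_7$ has no norm-$4$ screener; the analogous projection of $E_8$ onto the span of $E_6$, whose orthogonal complement in $E_8$ is an $A_2$ root lattice, shows $E_6^\circ$ contains no vector of norm $2/3$, ruling out norm-$6$ screeners in $E_6$. (Alternatively one quotes the known minimal norms $3/2$ of $E_7^\circ$ and $4/3$ of $E_6^\circ$, or simply enumerates in coordinates.)

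It remains to identify the $18$ vectors (roots plus nonroot screeners) of $A_3$ with the $C_3$ root system; for this I would invoke the classical isometry $A_3\cong D_3$, i.e. $\{x\in\Z^4:\sum x_i=0\}\cong\{y\in\Z^3:\sum y_i\in2\Z\}$, under which the $12$ roots of $A_3$ become the norm-$2$ vectors $\pm e_i\pm e_j$ and the six nonroot screeners become the norm-$4$ vectors $\pm 2e_i$, and these are precisely the short and long roots of $C_3$, as one confirms by exhibiting a simple system realizing the Cartan matrix
\[
\begin{pmatrix} 2 & -1 & 0 \\ -1 & 2 & -1 \\ 0 & -2 & 2 \end{pmatrix}.
\]
I expect the main obstacle to be bookkeeping rather than anything conceptual: making sure the lists of norm-$4$ and norm-$6$ lattice vectors are complete in each type, and carrying out the projection/parity argument for $E_6$ and $E_7$ cleanly (or quoting the structure of the dual lattices correctly). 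Everything else is forced once Lemmas \ref{2p 4p 6p} and \ref{divides deg G} are in hand, with Lemmas \ref{interaction of bilinear form} and \ref{basis screener results} available to double-check the surviving cases.
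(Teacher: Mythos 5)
Your overall strategy is sound and genuinely different from the paper's. The paper works entirely in the simple-root basis: it writes $\alpha=\sum n_i\alpha_i$, expresses $\langle\alpha,\alpha\rangle$ as a sum of squares of the coefficients and their consecutive differences, imposes $\langle\alpha,\alpha_i\rangle\in\{0,\pm t\}$ via Lemma \ref{interaction of bilinear form}, and then grinds out congruences on the $n_i$ (plus explicit minimization of the quadratic form for $E_6$ and $E_7$). You instead (a) reformulate the screener condition straight from Definition \ref{screener defn} as the divisibility condition $\langle\alpha,\beta\rangle\in t\Z$ on a basis, noting $\alpha\notin 2K$ is automatic for norms $4$ and $6$; (b) enumerate the norm-$4$ and norm-$6$ vectors of $A_n$ in the standard coordinate model $\{x\in\Z^{n+1}:\sum x_i=0\}$; and (c) for $E_6$, $E_7$ appeal to the structure of the dual lattices (projection of the unimodular $E_8$ onto $v^\perp$, or the known minimal norms $3/2$ and $4/3$ of $E_7^\circ$ and $E_6^\circ$). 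Your $E_7$ parity/projection argument is complete and arguably cleaner than the paper's case analysis; your $E_6$ argument is only sketched (to finish the projection route you need the coset norms of $A_2^\circ$, namely that nontrivial cosets have norms $\equiv 2/3 \pmod 2$, so that $2/3+|\pi_{A_2}(y)|^2$ can never be an even integer), but quoting the minimal norm of $E_6^\circ$, as you offer, closes it. The use of Lemma \ref{divides deg G} and the final identification of the $18$ vectors of $A_3$ with the $C_3$ roots (via $A_3\cong D_3$ and a simple system realizing the $C_3$ Cartan matrix) match the paper in substance.

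One step as stated fails: in the $A_n$ case you claim every norm-$4$ or norm-$6$ vector has a zero coordinate $j$ and a $\pm1$ coordinate $i$. For $n=5$ the vector $(1,1,1,-1,-1,-1)$ has no zero coordinate, and $A_5$ is precisely a case your determinant reduction does not eliminate for norm $6$ (since $3\mid 6$). The repair is immediate: pair a $+1$ coordinate with a $-1$ coordinate, getting $\langle\alpha,e_i-e_j\rangle=2\notin 3\Z$, so this vector is not a screener either; with that one-line fix your coordinate argument covers all $n>3$ uniformly.
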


\begin{proof}
First we prove the result for $A_d$ with $d\geq 3$. A $\Z$-basis of roots can be given so that $\langle\alpha,\alpha\rangle=2$, $\langle \alpha_i,\alpha_j\rangle=-1$ when $|i-j|=1$, and $\langle \alpha_i,\alpha_j\rangle=0$  otherwise whenever $i\neq j$. By Lemma \ref{2p 4p 6p}, any screener $\alpha$ that is not a root satisfies $\langle\alpha,\alpha\rangle=4,6$. Noting that Det$(G_{A_d})=d+1$, by Lemma \ref{divides deg G}, we know that if $\langle \alpha,\alpha\rangle =4$ then $d=2m+3$ for $m\geq 0$, while if $\langle \alpha,\alpha\rangle =6$, then $d= 3n+5$ for $n\geq 0$. We will observe below that no such elements exist unless $d=3$ and determine the non-root screeners when $d=3$.

Let $\alpha\in \Phi$ be a screener. If we set $\alpha= \sum_{i=1}^d n_i \alpha_i$, we can write $\langle\alpha,\alpha\rangle$ as a sum of squares:
\begin{align} \label{congruence relations for A_n 1}
\langle\alpha,\alpha\rangle& =(\sum_{i=1}^{d-1} 2n_i^2-2n_i n_{i+1})+ 2 n_d^2 \\
&=n_1^2 + \displaystyle\sum_{i=1}^{d-1} (n_{i+1}-n_i)^2 + n_d^2. \nonumber  
\end{align}
Thus $\alpha\in L$ such that
$\langle\alpha,\alpha\rangle=4$ must satisfy 
\begin{align} \label{alpha norm 4}
n_1,n_i-n_{i-1},n_d&=0,\pm 1, \quad \mathrm{and} \nonumber  \\
|n_1|,|n_i-n_{i-1}|,|n_d|&=1 \mbox{ exactly 4 times} \noindent 
\end{align} 
for  $i=2,\dots,d$, while $\alpha$ such that
$\langle\alpha,\alpha\rangle=6$ must satisfy 
\begin{align} \label{alpha norm 6}
n_1,n_i-n_{i-1},n_d&=0,\pm 1,\pm 2
\end{align}
for  $i=2,\dots,d$, and either
\begin{align}\label{alpha norm 6 1} 
|n_1|,|n_i-n_{i-1}|,|n_d|&=1  &\mbox{ exactly 6 times,}\\  
& \mbox{ or } &  \nonumber  \\
|n_1|,|n_i-n_{i-1}|,|n_d|&=2 &\mbox{ exactly 1 time} \label{alpha norm 6 2}\\ 
& \mbox{ and }& \nonumber \\
\label{alpha norm 6 3} \quad  |n_1|,|n_i-n_{i-1}|,|n_d|& =1 & \mbox{ exactly 2 times}   .  
\end{align}

By Lemma \ref{interaction of bilinear form}, we must also have that
\begin{align}
\langle\alpha,\alpha_1\rangle&= 2n_1 - n_2 &=  0,\pm t \label{congruence relations for A_n 2}\\
\langle\alpha,\alpha_j\rangle&=-n_{j-1} + 2n_j-n_{j+1} & =0,\pm t\label{congruence relations for A_n 3} \\
\langle\alpha,\alpha_d\rangle&= 2n_{d}- n_{d-1}&=0,\pm t  \label{congruence relations for A_n 4}
\end{align}
for $2\leq j\leq d-1$ and either $t=2$ or $t=3$ corresponding to $\langle\alpha,\alpha\rangle =4$ or $\langle\alpha,\alpha\rangle =6$, respectively.

Now suppose $n_1\in t\Z$ or $n_d\in t\Z$. The relations (\ref{congruence relations for A_n 2})--(\ref{congruence relations for A_n 4}) imply $n_2,\dots, n_d\in t\Z$ or $n_1,\dots, n_{d-1} \in t\Z$, implying $\alpha\in tL$, which contradicts Lemma \ref{basis screener results}$(i)$. So both $n_1$ and $n_d$ are $\pm1$ when $t=2$, and $\pm 1,\pm 2$ when $t=3$.

When $\langle\alpha,\alpha\rangle =4$, (\ref{congruence relations for A_n 2})--(\ref{congruence relations for A_n 4}) also imply that for $2\leq k\leq d-1$, $n_k\in 2\Z+i$ for $k\in 2\Z+i$, for $i=0,1$. Together with (\ref{alpha norm 4}), the only possibilities for screeners are thus $\pm(\alpha_1\pm \alpha_3)$ and $\pm(\alpha_1+2\alpha_2+\alpha_3)$ when $d=3$.

Now assume $\langle\alpha,\alpha\rangle =6$.   Suppose $\alpha$ satisfies (\ref{alpha norm 6 1}). 
Then we must have $d\geq 5$ and $n_1=\pm 1$, and we can replace $\alpha$ with $-\alpha$ so that $n_1$ is positive. Then using (\ref{congruence relations for A_n 2}), $n_2=2$, and using (\ref{congruence relations for A_n 3}), $n_3=3$, $n_4=4$, which already gives $\langle\alpha,\alpha\rangle >6$ in (\ref{congruence relations for A_n 1}). Suppose $\alpha$ satisfies (\ref{alpha norm 6 2}) and (\ref{alpha norm 6 3}).  If $n_1=2$, then $n_2=1$ and $n_3=0=n_4=\dots= n_d$ in order for (\ref{alpha norm 6 2}) and (\ref{alpha norm 6 3}) to be satisfied when $d>2$, contradicting $n_d\neq 0$. Similarly when $n_1=1$, using (\ref{congruence relations for A_n 2}) we have $n_2=2$ or $-1$, and then $n_3=0=n_4=\dots= n_d=0$ in order for  (\ref{alpha norm 6 2}) and (\ref{alpha norm 6 3}) to be satisfied when $d>2$, again contradicting $n_d\neq0$.  We can conclude there are no screeners satisfying  $\langle\alpha,\alpha\rangle =6$ when $d>2$. Therefore the only possibilities that satisfy (\ref{alpha norm 6}) and (\ref{congruence relations for A_n 2})--(\ref{congruence relations for A_n 4}) are $\pm(\alpha_1+2\alpha_2),\pm(2\alpha_1+\alpha_2), \pm(\alpha_1-\alpha_2)$ when $d=2$ as in Theorem \ref{classification rank 2}.

The set of roots for $A_3$ and nonroot screeners give the roots of $C_3$ via taking as the simple roots, the two roots $\alpha = \alpha_2$, $\beta = \alpha_1$ of norm squared 2,  and $\gamma = \alpha_3 - \alpha_1$  of norm squared 4, which give the Cartan matrix
\[ \left( \begin{array}{rrr}
2 & -1 & 0\\
-1 & 2 & -1 \\
0 & -2 & 2 
\end{array}
\right).\]

Next we consider the case of $E_6$.  A $\Z$-basis $\{\alpha_1,\cdots,\alpha_6\}$ can be chosen for $E_6$ with Gram matrix
$$G=\begin{pmatrix} 
2 & -1 & 0 & 0 & 0 & 0\\ 
-1 & 2 &-1 & 0 & 0  & 0 \\
0 & -1 & 2 & -1 &  0 & -1  \\
0 & 0 & -1 & 2 &  -1 & 0  \\
0 & 0 & 0 & -1 &  2  &0 \\
0 & 0  &-1& 0 &  0&2 \
\end{pmatrix}.$$
Since $\mbox{Det}(G)=3$, $E_6$ can have screeners of norm squared $2$ or $6$. We will show there are no screeners $\alpha$ satisfying  $\langle\alpha,\alpha\rangle =6$. 

Suppose
$\alpha=\sum_{i=1}^6 n_i\alpha_i$ is a screener with  $\langle\alpha,\alpha\rangle =6$. Then we have that
\begin{multline} \label{E6 norm}
\langle\alpha,\alpha\rangle = n_1^2+(n_1-n_2)^2+ (n_2-n_3)^2+ (n_4-n_3)^2\\
 + (n_5-n_4)^2 + (n_6-n_3)^2 + n_5^2+n_6^2-n_3^2
\end{multline} 
subject to
\begin{align}
&\langle \alpha_1,\alpha\rangle = 2n_1-n_2 &= 0,\pm 3 \ \label{E6 1} \\
&\langle \alpha_2, \alpha\rangle = -n_{1}+2n_2- n_{3} &= 0,\pm 3 \ \label{E6 2} \\
&\langle \alpha_3, \alpha\rangle = -n_{2}+2n_3- n_{4}-n_6 &= 0,\pm 3 \  \label{E6 3} \\
&\langle \alpha_4, \alpha\rangle =-n_3+ 2 n_4-n_5 &= 0,\pm 3 \ \label{E6 4}\\
&\langle \alpha_5, \alpha\rangle= 2 n_5-n_4 &= 0,\pm 3 \ \label{E6 5} \\ 
&\langle \alpha_6, \alpha\rangle = 2 n_6-n_3  & = 0,\pm 3 \label{E6 6}.
\end{align}

Suppose that $n_3\in 3\Z+i$ for $i=0,1,2$. Then using (\ref{E6 2}), we have $2n_2-n_1\in 3\Z+i$. Using (\ref{E6 1}), $2n_1-n_2\in 3\Z$. Adding these together gives $n_1+n_2\in 3\Z+i$, and then adding this to $2n_2-n_1$ gives  $3n_2\in 3\Z+2i$, so it must be that $n_3\in 3\Z$. 

Fix $n_3=3i$ for $i\in \Z$, and replace $\alpha$ with $-\alpha$ if necessary so that $n_3 $ is nonnegative. 

Viewing (\ref{E6 norm}) as a function in the real variables $n_1,n_2,n_4,n_5,n_6$, we have that (\ref{E6 norm}) has a global minimum of $\langle\alpha,\alpha\rangle \geq 3 i^2/2$ at $n_1=i,\,n_2=2i,\, n_4=2i,\, n_5=i$, and $n_6 = 3i/2$. Since we want $\langle\alpha,\alpha\rangle =6$, this leaves the cases $i=0,1,2$. When $i=2$, we have $\langle\alpha,\alpha\rangle \geq 6$, with equality if and only if (\ref{E6 norm}) takes on the global minimum with $(n_1,\dots,n_6)=(2,4,6,4,2,3)$, but in this case (\ref{E6 3}) does not hold. When $i=1$, (\ref{E6 6}) gives $n_6=0$ or $3$. In either case, this gives a value greater than $\langle\alpha,\alpha\rangle =6$ for (\ref{E6 norm}), thus not producing any new screeners.

We have left to check the case $i=0$ and thus $n_3=0$. In this case, the relations  (\ref{E6 norm}) and  (\ref{E6 1})--(\ref{E6 6}) are equivalent to the relations (\ref{congruence relations for A_n 1}) and (\ref{congruence relations for A_n 2})--(\ref{congruence relations for A_n 4}) for $A_5$ with $n_3=0$ under the bases determined by the Gram matrices prescribed. Since we have already shown that $A_5$ has no screeners that were not roots, in particular when $n_3=0$, neither can $E_6$. Thus no screeners $\alpha$ with $\langle\alpha,\alpha\rangle =6$ exist for $E_6$.

Next we consider the case of $E_7$.  A $\Z$-basis $\{\alpha_1,\cdots,\alpha_6\}$ can be chosen for $E_7$ with Gram matrix
$$G=\begin{pmatrix} 
2 &-1 &0&0&0&0&0 \\
-1&2 & -1 & 0 & 0 & 0 & 0\\ 
0&-1 & 2 &-1 & 0 & 0  & 0 \\
0&0 & -1 & 2 & -1 &  0 & -1  \\
0&0 & 0 & -1 & 2 &  -1 & 0  \\
0&0 & 0 & 0 & -1 &  2  &0 \\
0&0 & 0  &-1& 0 &  0&2 \
\end{pmatrix}.$$
Since $\mbox{Det}(G)=2$, $E_7$ can have screeners of norm squared $2$ or $4$. We show there are no screeners $\alpha$ satisfying  $\langle\alpha,\alpha\rangle =4$.

Suppose
$\alpha= \sum_{i=1}^6 n_i \alpha_i$ is a screener with  $\langle\alpha,\alpha\rangle =4$. Then we have that
\begin{multline} \label{E7 norm}
\langle\alpha,\alpha\rangle
= n_1^2+(n_1-n_2)^2+ (n_2-n_3)^2+(n_4-n_3)^2 \\
+(n_5-n_4)^2 + (n_6-n_5)^2+ (n_7-n_4)^2 + n_7^2+n_6^2-n_4^2 
\end{multline} 
subject to
\begin{align}
&\langle \alpha_1,\alpha\rangle = 2n_1-n_2&=& 0,\pm 2  \label{E7 1} \\
&\langle \alpha_2, \alpha\rangle = -n_{1}+2n_2- n_{3}&=& 0,\pm 2 \label{E7 2} \\
&\langle \alpha_3, \alpha\rangle = -n_{2}+2n_3- n_{4}&=& 0,\pm 2 \label{E7 3} \\
&\langle \alpha_4, \alpha\rangle = -n_{3}+2n_4- n_{5}- n_{7}&=& 0,\pm 2 \label{E7 4} \\
&\langle \alpha_5, \alpha\rangle =-n_4+ 2 n_5-n_6 &=& 0,\pm 2 \label{E7 5}\\
&\langle \alpha_6, \alpha\rangle= 2 n_6-n_5 &=& 0,\pm 2 \label{E7 6} \\ 
&\langle \alpha_7, \alpha\rangle = 2 n_7-n_4 &=& 0,\pm 2 \label{E7 7}.
\end{align}

Equations (\ref{E7 6}) and (\ref{E7 7}) imply $n_4,n_5\in 2\Z$, and then (\ref{E7 3}) and (\ref{E7 5}) imply $n_2,n_6\in 2\Z$. Also (\ref{E7 2}) and (\ref{E7 4}) then imply that either all of $n_1,n_3,n_7\in 2\Z$ or all of $n_1,n_3,n_7\in 2\Z+1$. The former would imply $\alpha\in 2L$, so we must have the latter. However, if $n_1,n_3,n_7\in 2\Z+1$ while $n_2,n_4,n_5,n_6\in 2\Z$, then (\ref{E7 norm}) has at least 6 odd terms added together, namely  $$n_1^2+(n_1-n_2)^2+ (n_2-n_3)^2+(n_4-n_3)^2+ (n_7-n_4)^2 + n_7^2$$ adding up to at least $6>4$, so that $\langle\alpha,\alpha\rangle\geq 6+n_6^2+(n_6-n_5)^2+(n_5-n_4)^2-n_4^2$. In particular we must have that $n_4\neq 0$.

Fix $n_4=2 i$ for $i\neq 0$, and replace $\alpha$ with $-\alpha$ if necessary so that $n_4 $ is nonnegative.  By (\ref{E7 7}) and the fact that $n_7\in 2\Z+1$, we have that $n_7=i$ if $i$ is odd and $n_7 = i\pm 1$ if $i$ is even. Thus 
\begin{multline}\label{E7}
 \langle\alpha,\alpha\rangle = n_1^2+(n_1-n_2)^2+ (n_2-n_3)^2+(2i-n_3)^2 \\
+(n_5-2i)^2 + (n_6-n_5)^2 +n_6^2 + 2\epsilon  -2i^2 
\end{multline} 
where $\epsilon=0$ if $i$ is odd and $\epsilon=1$ if $i$ is even.
 Viewing  (\ref{E7}) as a function in the real variables $n_1,n_2,n_3,n_5,n_6$, we observe that  (\ref{E7}) has a global minimum of $i^2/3+2\epsilon$ when $n_1=i/2$, $n_2=i$, $n_3=3i/2,$ $n_5= 4i/3$, and $n_6=2i/3$.   This minimum is greater than 4 unless $\epsilon = 0$ and $i$ is odd with $i=1,3$, or $\epsilon=1$ and $i$ is even with $i=2$.

Note also that since $n_1,n_3,n_7\in 2\Z+1$ and $n_2,n_4,n_5,n_6\in 2\Z$, we have 
\begin{equation}
\langle \alpha, \alpha \rangle = K + J + 2 \epsilon - 2 i^2, \quad \mathrm{with} \ K = \sum_{l = 1}^4 k_l^2, \ \mathrm{and} \ J = \sum_{l = 1}^3 j_l^2
\end{equation}
where $k_l$ are odd for $l = 1, \dots, 4$ and $j_l$ are even for $l = 1,2,3$.  

Thus if $\langle \alpha, \alpha \rangle = 4$, we need $K+J = 4 - 2\epsilon + 2i^2$ which is 6, 22 or 10 in the three cases we need to consider of $\epsilon = 0$ and $i=1,3$, or $\epsilon=1$ and $i=2$.  However a straightforward calculation shows that the values of $K$ are 4, 12, 20, 28, .... and the values of $J$ are 0, 4, 8, 12, ..... leading to no values of $K + J$ of 6, 10, or 22.  Hence there are no screeners satisfying $\langle\alpha,\alpha\rangle=4$ for $E_7$, and all screeners must be roots.

Finally, since $E_8$ has $\mbox{Det}(G)=1$, all screeners $\alpha$ are roots satisfying $\langle\alpha,\alpha\rangle=2$, and this completes the proof. 
\end{proof}

Next we will look at $D_n$ for $n>3$. In Theorem \ref{classification rank 2} and Theorem \ref{classification A}, we saw $D_2 = A_2$ and $D_3 = A_3$ had $6$ screeners that are not roots and are of norm squared $6$ and $4$ respectively. For $D_n$, $n\geq 3$, we will use a $\Z$-base with Cartan matrix
$$\begin{pmatrix} 
2 & 0 & -1 & 0 & 0 &\cdots & 0 & 0 &0 & 0 \\ 
0 & 2 &-1 & 0 & 0  & \cdots & 0  & 0 &0 & 0  \\
-1 & -1 & 2 & -1 &  0  &  \cdots  & 0 & 0 &0 & 0 \\
0 & 0 & -1 & 2 &  -1  &  \cdots  & 0 & 0 &0 & 0\\
0 & 0 & 0 & -1 &  2   &  \cdots  & 0 & 0 &0 & 0 \\
\vdots & \vdots & \vdots & \vdots & \vdots &  \ddots& \vdots & \vdots& \vdots & \vdots \\
0 & 0 & 0 & 0 & 0 &  \cdots  & 2 & -1 & 0 &0 \\
0 & 0 & 0 & 0 & 0 &  \cdots & -1  & 2 & -1 & 0 \\
0 & 0 & 0 & 0& 0 &  \cdots  & 0 & -1 & 2 & -1 \\
0 & 0  & 0 & 0& 0 &  \cdots  & 0 &0 & -1 & 2  \\
\end{pmatrix}.$$

\begin{thm}\label{classification-Dn} The root lattice $D_n$ for $n\geq 2$ has additional screeners which are not roots for $D_n$, but give root systems for non-simply laced Lie algebras.  $D_2$ has additional screeners of norm squared $6$ giving rise to the $G_2$ roots, while $D_n$ for $n\geq 3$ has additional screeners of norm squared 4, giving rise to the $F_4$ roots and the $C_n$ roots for $n = 4$ and $n >4$, respectively.  In particular, with respect to the $\Z$-basis used in the Gram matrix above, these additional screeners are given by: \\

	(i) For $D_2 = A_2$ there are 6 nonroot screeners given by 
\[ \{\pm(\alpha_1-\alpha_2), \ \pm(\alpha_1+2\alpha_2), \ \pm(2\alpha_1+\alpha_2)\}\]
extending the roots of $A_2$ to include the roots of $G_2$. 

	(ii) For $D_3=A_3$ there are $6$ nonroot screeners given by
 \[ \{\pm(\alpha_1+\alpha_2), \ \pm(\alpha_1-\alpha_2), \ \pm(\alpha_1+\alpha_2+2\alpha_3) \},\]
extending the roots of $A_3$ to include the roots of $C_3$.

	(iii) For $D_4 $ there are $24$ nonroot screeners given by 
\begin{multline*}
\{\pm( \alpha_i+ \alpha_j), \ \alpha_i- \alpha_j, \ \pm(2\alpha_i+\alpha_j+2\alpha_3+\alpha_k),\\     
\pm(\alpha_i+2\alpha_3+\alpha_j) \, |\, \{i,j,k\}= \{1,2,4\}\},
\end{multline*}
extending the roots of $D_4$ to include the roots of $F_4$.  
 
	(iv) For $D_n$, for $n>4$, there are $2n$ nonroot screeners given by
 \[\{\pm(\alpha_1+\alpha_2), \ \pm(\alpha_1-\alpha_2),\ \pm(\alpha_1+\alpha_2+2\sum_{i=3}^j \alpha_i) \,| 3\leq j\leq n  \},\] 
extending the roots of $D_n$ to the roots of $C_n$.  
\end{thm}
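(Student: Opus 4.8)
\emph{Proof proposal.} The cases $n=2$ and $n=3$ are already contained in Theorem \ref{classification rank 2} (case 2(c) with $p=1$) and in the $A_3$ part of Theorem \ref{classification A}, so the plan is to treat $D_n$ with $n\geq 4$. The first step is to note that the Gram matrix displayed above has $\mbox{Det}(G_{D_n})=4$: then Lemma \ref{divides deg G} forces $\langle\alpha,\alpha\rangle/2$ to divide $4$ for every screener $\alpha$, while Lemma \ref{2p 4p 6p} forces $\langle\alpha,\alpha\rangle\in\{2,4,6\}$, and intersecting gives $\langle\alpha,\alpha\rangle\in\{2,4\}$. Inside $D_n$ the norm-$2$ vectors are exactly the roots and every root is a screener, so the whole problem reduces to classifying the screeners of norm squared $4$.

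For the norm-$4$ case the key observation is a reformulation via the dual lattice: a vector $\alpha\in D_n$ with $\langle\alpha,\alpha\rangle=4$ is a screener if and only if $2\alpha/\langle\alpha,\alpha\rangle=\alpha/2\in D_n^\circ$ and $\alpha\notin 2D_n$, i.e.\ if and only if $\alpha\in 2D_n^\circ\setminus 2D_n$; writing $\alpha=2w$, this says precisely that $w$ is a norm-$1$ vector of $D_n^\circ$ lying outside $D_n$. I would then realize $D_n=\{x\in\Z^n:\sum_i x_i\in 2\Z\}$ with $\langle\cdot,\cdot\rangle$ the standard dot product, so that $D_n^\circ=\Z^n+\Z(\tfrac12,\dots,\tfrac12)=\Z^n\sqcup(\Z^n+(\tfrac12,\dots,\tfrac12))$. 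In the integral coset the norm-$1$ vectors are the $\pm e_i$, all of which lie outside $D_n$ since their coordinate sum $\pm1$ is odd; a norm-$1$ vector in the spinor coset must have every coordinate equal to $\pm\tfrac12$, which is possible only when $n=4$. Hence for $n>4$ the norm-$4$ screeners are exactly the $2n$ vectors $\pm 2e_i$, and for $n=4$ one obtains in addition the $16$ vectors $(\pm1,\pm1,\pm1,\pm1)$, for a total of $24$ nonroot screeners.

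To finish I would identify the resulting set of all screeners as the claimed root system: for $n>4$, $\{\pm e_i\pm e_j\}\cup\{\pm 2e_i\}$ is the $C_n$ root system, and for $n=4$, $\{\pm e_i\pm e_j\}\cup\{\pm 2e_i\}\cup\{(\pm1,\pm1,\pm1,\pm1)\}$ is the $F_4$ root system; this is seen either by checking directly that each of these finite sets is stable under reflection in each of its elements, or by quoting the standard coordinate realizations of $C_n$ and $F_4$ from \cite{H}. The last task is bookkeeping: translate $\pm2e_i$ and $(\pm1,\pm1,\pm1,\pm1)$ into the $\Z$-basis $\{\alpha_1,\dots,\alpha_n\}$ of the theorem via the standard identification $\alpha_1=e_{n-1}-e_n$, $\alpha_2=e_{n-1}+e_n$, $\alpha_k=e_{n-k+1}-e_{n-k+2}$ for $3\leq k\leq n$, under which $\alpha_1+\alpha_2=2e_{n-1}$, $\alpha_1-\alpha_2=-2e_n$, and $\alpha_1+\alpha_2+2\sum_{i=3}^{j}\alpha_i=2e_{n-j+1}$ for $3\leq j\leq n$, matching the lists in items (iii) and (iv).

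I expect this to be a routine rather than a deep argument; the two points that need care are keeping the $n=4$ exception — which arises solely because the half-integer glue vector has norm $n/4=1$ exactly when $n=4$ — separated from the generic $C_n$ conclusion, and the final change of basis. If instead one stays in the $\Z$-basis throughout, in the style of the $A_n$, $E_6$, $E_7$ arguments (writing $\alpha=\sum_i n_i\alpha_i$, using Lemma \ref{interaction of bilinear form} on the pairs $(\alpha_i,\alpha)$ to get $\langle\alpha,\alpha_i\rangle\in\{0,\pm2\}$, and solving the resulting linear system together with $\langle\alpha,\alpha\rangle=4$ and $\alpha\notin 2D_n$), then the main obstacle becomes a finite but lengthy case analysis on the chain coordinates $n_3,\dots,n_n$, of the same flavour as the $E_7$ computation above.
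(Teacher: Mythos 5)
Your proposal is correct, but it takes a genuinely different route from the paper's. The paper stays in the $\Z$-basis throughout: it writes $\alpha=\sum_i n_i\alpha_i$, expresses $\langle\alpha,\alpha\rangle$ as a sum of squares, imposes $\langle\alpha,\alpha_i\rangle\in\{0,\pm2\}$ via Lemma \ref{interaction of bilinear form}, and then runs a parity/minimization case analysis on the $n_i$ (in the same style as its $E_6$ and $E_7$ arguments), producing the lists in (iii) and (iv) directly in the $\alpha_i$ coordinates. You instead use the coordinate model $D_n=\{x\in\Z^n:\sum_i x_i\in2\Z\}$ and observe, straight from Definition \ref{screener defn}, that a norm-$4$ vector $\alpha\in D_n$ is a screener exactly when $w=\alpha/2$ is a norm-$1$ vector of $D_n^\circ$ outside $D_n$; enumerating norm-$1$ vectors in the two cosets of $D_n^\circ$ then yields $\pm2e_i$ always and the sixteen vectors $(\pm1,\pm1,\pm1,\pm1)$ precisely when $n=4$, which is a cleaner and more conceptual explanation of why $n=4$ is exceptional than the paper's computation. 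The reduction to norm $2$ or $4$ via Lemmas \ref{2p 4p 6p} and \ref{divides deg G} is the same as the paper's, and your change of basis $\alpha_1=e_{n-1}-e_n$, $\alpha_2=e_{n-1}+e_n$, $\alpha_k=e_{n-k+1}-e_{n-k+2}$ is consistent with the displayed Gram matrix and does recover the stated lists. Two small points you should make explicit: first, when passing from $w$ back to $\alpha=2w$ you need $2w\in D_n$ (immediate here, since $\pm2e_i$ and $(\pm1,\pm1,\pm1,\pm1)$ have even coordinate sum, but it is part of the equivalence you use); second, for the final identification with $F_4$ your set is a rescaled and rotated copy of the textbook realization (short roots $\pm e_i$, $\tfrac12(\pm1,\pm1,\pm1,\pm1)$), so rather than ``quoting'' the standard coordinates it is safer to exhibit a simple system and its Cartan matrix, as the paper does, or to check reflection-closure as you suggest. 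What each approach buys: yours is shorter, explains the $n=4$/$F_4$ phenomenon structurally (the glue vector has norm $n/4$), and avoids the lengthy congruence bookkeeping; the paper's is self-contained in the chosen $\Z$-basis, requires no auxiliary coordinate model or basis translation, and parallels the method used for the other root lattices in Theorem \ref{classification A}.
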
 

\begin{proof} The screeners that are not roots of $D_n$ in the cases for when $n = 2$ and $n = 3$ have already been proven in Theorems \ref{classification rank 2} and \ref{classification A}, including the fact that $D_3 = A_3$ gives rise to $C_3$.

As seen in Theorem \ref{classification rank 2}, the extra nonroot screeners in $D_2 = A_2$ give the $G_2$ root lattice via the correspondence $\alpha \longleftrightarrow \alpha_1$ and $\beta \longleftrightarrow \alpha_2 - \alpha_1$, for $\alpha, \beta$ a simple set of roots for $G_2$ with Cartan matrix
\[\left(\begin{array}{rr}
2 & -1\\
-3 & 2 
\end{array}
\right).
\] 

By Lemma \ref{2p 4p 6p}, any screener $\alpha$ that is not a root satisfies $\langle\alpha,\alpha\rangle=4$ or $6$. Note that $\mbox{Det}(G_{D_d})=4$ for $d> 3$, so we only need to check for screeners $\alpha$ satisfying $\langle\alpha,\alpha\rangle=4$.  

We can write $\langle\alpha,\alpha\rangle$ as the quadratic form
\begin{align} \label{relation D}
\langle\alpha,\alpha\rangle= n_1^2 +n_2^2 + n_d^2 +\displaystyle\sum_{j=2}^{d-1} (n_{j+1}-n_j)^2+ (n_{3}-n_1)^2-n_3^2.  
\end{align}
In addition, $\alpha$ must satisfy the relations
\begin{align}
\langle \alpha_1,\alpha\rangle& = 2n_1 - n_3 & =& 0,\pm 2 \label{relation D1}\\
\langle \alpha_2,\alpha\rangle &=2n_2 - n_3 & =& 0,\pm 2 \label{relation D2}\\
\langle \alpha_3,\alpha\rangle &=2n_3-n_1-n_2-n_4 &=& 0,\pm 2 \label{relation D3} \\
\langle \alpha_j,\alpha\rangle &=-n_{j-1} + 2n_j-n_{j+1} &=& 0,\pm 2 \label{relation D4}\\
\langle \alpha_d,\alpha\rangle &=2 n_d- n_{d-1}  & =& 0,\pm 2, \label{relation D5}
\end{align}
for $4\leq j\leq  d-1.$ Then  (\ref{relation D1}) gives $n_3\in 2\Z$.  As usual we can replace $\alpha$ with $-\alpha$ so that $n_3$ is nonnegative and write $n_3=2i$. 

Suppose $d=4$. Then (\ref{relation D}) has a global minimum of  $2i^2$ at $n_1=n_2=n_4=i$, implying $n_3$ is either $0$ or $2$.  Using relation (\ref{relation D3}), at least one of $n_1,n_2,n_4$ or all three of must be in $2\Z$. However, not all three can be in $2\Z$, or else $\alpha\in 2L$. When $n_3=0$, this leaves $\alpha_i\pm \alpha_j$ for $i,j\in \{1,2,4\}$ with $i\neq j$, while $n_3=2$ leaves $\alpha_i+\alpha_j+2\alpha_3 +2\alpha_k$ and $\alpha_i+\alpha_j+2\alpha_3$ for $i,j,k\in \{1,2,4\}$ with i$\neq j\neq k$, giving the list of extra screeners when $d = 4$.

To see that this gives the $F_4$ root lattice, we consider $\beta_1 = \alpha_4-\alpha_1$, $\beta_2 = \alpha_1 - \alpha_2$, $\beta_3 = \alpha_2$ and $\beta_4 = \alpha_3$ and note that this set of simple roots give the $F_4$ Cartan matrix 
\[\left(\begin{array}{rrrr}
2 & -1 & 0 & 0\\
-1 & 2 & -2 & 0\\
0 & -1 & 2 & -1\\
0 & 0 & -1 & 2
\end{array}
\right).
\]

For $d>4$, Equation (\ref{relation D4}) gives $n_{2j+1}\in 2\Z$ for all $j\geq 2$, and (\ref{relation D4}) also implies either $n_{2j}\in2\Z$ for all $ j\geq 2$ or $n_{2j}\in2\Z+1$ for all $ j\geq 2$. But relation (\ref{relation D5}) implies that $n_{d-1}\in 2\Z$. Thus $n_{2j}\in 2\Z+1$ for all $j\geq 2$ can only hold if $d$ is even.

In addition, by (\ref{relation D1}) and (\ref{relation D2}), we have that $n_1 = \epsilon_1 + i$ and $n_2 = \epsilon_2 + i$ for $\epsilon_1, \epsilon_2 \in \{0, \pm 1\}$.  Thus
\begin{equation}\label{D again}
n_1^2 +n_2^2 + (n_1-n_3)^2+ (n_2-n_3)^2-n_3^2 = 2 \epsilon_1^2 + 2 \epsilon_2^2 .
\end{equation}

Suppose $n_{2j}\in 2\Z+1$ for all $j\geq 2$. Then using (\ref{relation D3}), $n_1+n_2\in 2\Z+1$ since $n_4\in 2\Z+1$. So exactly one of $n_1$ and $n_2$ is odd.  Thus $2 \epsilon_1^2 + 2 \epsilon_2^2 = 2$.  However, $n_d^2 + \sum_{j=3}^{d-1} (n_{j+1}-n_j)^2\geq d-2$ since there are $d-2$ nonzero terms.  We also have that $d$ is even and greater than 4, and thus in particular, $d\geq 6$.  It follows from this and Equation (\ref{relation D}) that $\langle \alpha,\alpha\rangle \geq (d-2) + 2  = d >4$, implying that there are no screeners satisfying $\langle \alpha, \alpha \rangle = 4$ in this case.

Suppose $n_j\in 2\Z$ for $j\geq 4$.  Since $n_3$ is also even, if $n_1,n_2\in 2\Z$, then $\alpha \in 2L$, and so both $n_1,n_2\in 2\Z+1$.    Thus if $i$ is even, $\epsilon_1, \epsilon_2 \in \{\pm 1\}$ and if $i$ is odd, $\epsilon_1 = \epsilon_2 = 0$.

Therefore if $i$ is even, Equation (\ref{relation D})  implies that $\langle \alpha, \alpha \rangle = 4 + n_d^2 + \sum_{j=3}^{d-1} (n_{j+1}-n_j)^2$, and thus we must have $n_d = n_{d-1} = \cdots = n_3 = 0$, giving $n_1, n_2 \in \{\pm 1\}$, and the screeners $\pm(\alpha_1 + \alpha_2), \ \pm(\alpha_1 - \alpha_2)$. 

If $i$ is odd,  we have $\langle \alpha, \alpha \rangle = n_d^2 + \sum_{j=3}^{d-1} (n_{j+1}-n_j)^2$, and we need 1 and only 1 of the summands, which are all even, to be nonzero and in fact $4$.  But this implies $i=1$ and $n_3=n_4=\cdots n_r=2$, for $3\leq r\leq d$, and $n_{r+1}=n_{r+2}=\cdots = n_d=0$. This gives rise to screeners of the form $\pm (\alpha_1+\alpha_2+ 2 \sum_{j=3}^r\alpha_j)$ for $3\leq r\leq d$ as the only possible screeners with $\langle\alpha,\alpha\rangle=4$ when $n_3=2$.  This gives a total of $2d$ screeners that are not roots for $d>4$. 

The fact that these roots of $D_n$ and additional nonroot screeners for $n>4$ give rise to $C_n$ can be seen by taking the simple roots to be  $\alpha_1 - \alpha_2, \alpha_2, \alpha_3, \dots, \alpha_{n-1}, \alpha_n$ with the Cartan matrix then given by
\begin{equation}\label{Cn-Cartan}
\begin{pmatrix} 
2 & -2 & 0 & 0 & 0 &\cdots & 0 & 0 &0 & 0 \\ 
-1 & 2 &-1 & 0 & 0  & \cdots & 0  & 0 &0 & 0  \\
0 & -1 & 2 & -1 &  0  &  \cdots  & 0 & 0 &0 & 0 \\
0 & 0 & -1 & 2 &  -1  &  \cdots  & 0 & 0 &0 & 0\\
0 & 0 & 0 & -1 &  2   &  \cdots  & 0 & 0 &0 & 0 \\
\vdots & \vdots & \vdots & \vdots & \vdots &  \ddots& \vdots & \vdots& \vdots & \vdots \\
0 & 0 & 0 & 0 & 0 &  \cdots  & 2 & -1 & 0 &0 \\
0 & 0 & 0 & 0 & 0 &  \cdots & -1  & 2 & -1 & 0 \\
0 & 0 & 0 & 0& 0 &  \cdots  & 0 & -1 & 2 & -1 \\
0 & 0  & 0 & 0& 0 &  \cdots  & 0 &0 & -1 & 2  \\
\end{pmatrix},
\end{equation}
finishing the proof. 
\end{proof}

\subsection{Summary for positive definite even lattices generated by a screening system}

We now summarize the previous results above in Theorems \ref{classification A} and \ref{classification-Dn}, as well as the cases in Theorem \ref{classification rank 2} for a lattice generated by a screening system, and the implications of Theorem 5.11.

\begin{thm}  Let $L$ be a positive definite even lattice that is generated by a screening system.  Then the set of all screeners for $L$ consists of the roots in an orthogonal direct sum of the following (in general rescaled) root systems:  $A_1$, $B_2$, $G_2$, $B_3$, $C_3$, $B_4$, $F_4$, $E_6$, $E_7$, $E_8$, and $A_n$, $C_n$,  and $B_n$, for $n >4$. 
\end{thm}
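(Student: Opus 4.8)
The plan is to combine the structural classification of Theorem \ref{L-even-classification-theorem} with the component-by-component classifications of screeners carried out in Theorems \ref{classification rank 2}, \ref{classification A}, and \ref{classification-Dn}. First I would invoke Theorem \ref{L-even-classification-theorem} to write $L$ as an orthogonal direct sum $\bigoplus_i \sqrt{p_i} K_i$ with each $K_i$ a simply laced root lattice ($A_n$, $D_n$, $E_6$, $E_7$, or $E_8$), with the $\Z$-basis of screeners respecting this decomposition. Since rescaling $K_i$ by $\sqrt{p_i}$ induces a bijection on screeners (as noted in the paragraph preceding Theorem \ref{L-even-classification-theorem}), it suffices to classify $\Phi$ for each $K_i$ and then account for any screeners that might mix components.

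The second step is to rule out screeners that have nonzero inner product with basis vectors in two distinct orthogonal components. Here I would use Lemma \ref{interaction of bilinear form}: if $\alpha \in \Phi$ has a nonzero pairing with elements of two orthogonal summands $\sqrt{p_i}K_i$ and $\sqrt{p_j}K_j$, then writing $\alpha = \alpha^{(i)} + \alpha^{(j)} + \cdots$ in components, the norm constraints $\langle \alpha, \alpha \rangle \le 3 \langle u, u \rangle$ for any basis screener $u$ (from Lemma \ref{interaction of bilinear form} part (iii)) together with $\langle \alpha, u \rangle = \pm \tfrac12 \langle u, u\rangle$ force the component norms to be too large. More carefully, since $\langle \alpha, \alpha \rangle = \sum_k \langle \alpha^{(k)}, \alpha^{(k)} \rangle$ and each nonzero $\alpha^{(k)}$ contributes at least $2 p_k$, having two nonzero components with the half-integer pairing condition is incompatible with $\langle \alpha, \alpha \rangle \in \{2p_k, 4p_k, 6p_k\}$ for the relevant $k$; I would push this through to conclude $\alpha$ lies entirely in one component. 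Thus $\Phi(L) = \bigcup_i \Phi(\sqrt{p_i}K_i)$ as a disjoint union across orthogonal summands.

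The third step is purely bookkeeping: assemble the per-component answers. By Theorem \ref{classification A}, $A_n$ for $n>3$ and $E_6, E_7, E_8$ have $\Phi$ equal to the roots, so these contribute the root systems $A_n$ ($n>3$), $E_6$, $E_7$, $E_8$; $A_3$ contributes $C_3$; and by Theorem \ref{classification rank 2}, $A_2$ (which is $D_2$) contributes $G_2$, while the case $m=p$ of Type 2 (two orthogonal copies of $\sqrt{p/2}A_1$) has its screeners forming $B_2$. By Theorem \ref{classification-Dn}, $D_3 = A_3$ gives $C_3$ (consistently), $D_4$ gives $F_4$, and $D_n$ for $n>4$ gives $C_n$; and $n$ orthogonal copies of $A_1$ give $B_n$ (for $n=3$ this is $B_3$, for $n=4$ this is $B_4$, for $n>4$ this is $B_n$). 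Collecting these, the set of all screeners for $L$ forms an orthogonal direct sum of (rescaled) root systems drawn from $A_1$, $B_2$, $G_2$, $B_3$, $C_3$, $B_4$, $F_4$, $E_6$, $E_7$, $E_8$, and $A_n$, $C_n$, $B_n$ for $n > 4$, which is exactly the claimed list.

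The main obstacle I anticipate is the second step — verifying that no screener straddles two orthogonal components. The subtlety is that while Lemma \ref{interaction of bilinear form} constrains pairwise angles, one must be careful that a basis of screeners has been chosen (via the orthogonalization procedure in the proof of Theorem \ref{L-even-classification-theorem}) in which components with distinct norm scales are already orthogonal, and then argue that a hypothetical cross-component screener $\alpha$ would pair with \emph{some} basis screener of each component it touches with the forbidden half-norm value, contradicting the norm bound $\langle \alpha, \alpha \rangle \le 6 p$ where $2p$ is the smallest relevant basis norm. One should also double-check the edge case where two components have the \emph{same} rescaling factor $p_i = p_j$ (e.g. $\sqrt{p}A_1 \oplus \sqrt{p}A_1$ versus $\sqrt{p}A_2$): there Lemma \ref{basis screener results}(iii) governs whether $\alpha_1 \pm \alpha_2$ is a screener, and the answer ($B_2$ when the two $A_1$'s stay orthogonal, versus $G_2$ when they combine into $A_2$) is already encoded in Theorem \ref{classification rank 2}, so the summary just needs to cite it correctly rather than re-derive it.
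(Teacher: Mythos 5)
Your overall architecture (decompose $L$ via Theorem \ref{L-even-classification-theorem}, classify screeners per component via Theorems \ref{classification rank 2}, \ref{classification A}, \ref{classification-Dn}, then assemble) is the paper's route, but your second step contains a genuine error that also makes the proposal internally inconsistent. You claim to show that no screener can have nonzero components in two orthogonal summands, concluding $\Phi(L)=\bigcup_i \Phi(\sqrt{p_i}K_i)$. This is false, and it must be false for the theorem to hold: in $\sqrt{q}A_1\oplus\sqrt{q}A_1$ with orthogonal generators $\beta_1,\beta_2$ of norm squared $2q$, the element $\alpha=\beta_1+\beta_2$ has $\tfrac{2\alpha}{\langle\alpha,\alpha\rangle}=\tfrac{\alpha}{2q}\in L^\circ$ and $\alpha\notin 2L$, so it is a screener straddling the two components; these cross-component elements are exactly the long roots of $B_2$ (and of $B_n$ for $n$ copies), which your own step 3 needs. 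Nor does Lemma \ref{interaction of bilinear form} yield the contradiction you sketch: with $u=\beta_1$ one has $\langle u,\alpha\rangle=2q=\tfrac12\langle\alpha,\alpha\rangle$ and $\langle\alpha,\alpha\rangle=2\langle u,u\rangle$, which is precisely the allowed case $(iii)$ with $a=2$, so the ``half-norm pairing'' configuration is compatible with the norm bounds, not excluded by them.

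What is actually needed — and what the paper supplies — is a refined cross-component analysis rather than a blanket exclusion: one must show that cross-component screeners arise only from mutually orthogonal equal-scale $A_1$ summands (e.g.\ via Lemma \ref{basis screener results}$(iii)$, the condition $\langle\alpha\pm\beta,L\rangle\subset\langle\alpha,\alpha\rangle\Z$ fails as soon as one of the two components has rank at least $2$, since some lattice vector then pairs with $\alpha$ or $\beta$ to give half the norm), and then one must compute the full screener set of $(A_1)^n$ for all $n$, not just $n=2$. Your citation trail does not cover this: Theorem \ref{classification rank 2} treats only $(A_1)^2$, and Theorem \ref{classification-Dn} says nothing about $(A_1)^n$; the paper's proof of the summary theorem explicitly extends the rank-two computation by exhibiting the simple roots $\alpha_1,\alpha_2-\alpha_1,\dots,\alpha_n-\alpha_{n-1}$ with Cartan matrix dual to (\ref{Cn-Cartan}) and listing the $2n$ short and $2n(n-1)$ long roots of $B_n$. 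So the gap is concrete: step 2 as stated is wrong (and contradicts step 3), and the correct replacement — the identification of exactly which cross-component screeners occur and the $(A_1)^n\to B_n$ computation for $n\geq 3$ — is asserted rather than proved.
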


\begin{proof} From Theorems \ref{classification A}, \ref{classification-Dn}, all the cases in the statement of the Corollary appear except for the cases when additional screeners arise from rescaled orthogonal simply laced lattices.  In Theorem \ref{classification rank 2}, the case $A_1$ x $A_1$ is treated and shown to give rise to $C_2 = B_2$.  Thus in the list above only $B_n$, for $n \geq 4$ is not included in our previous theorems.  However, the case $(A_1)^2$ as treated in Theorem \ref{classification rank 2}, extends easily to $(A_1)^n$ and gives rise to additional nonroot screeners that are the roots of $B_n$, for $n > 1$.   That is, if $\alpha_1, \dots, \alpha_n$ are simple roots for $(A_1)^n$, then $\alpha_1, \alpha_2 - \alpha_1, \alpha_3- \alpha_2, \dots, \alpha_n - \alpha_{n-1}$ are a list of simple roots giving rise to the dual Cartan matrix to that of $C_n$ given in (\ref{Cn-Cartan}), and then the full set of screeners is given by the roots of $B_n$ realized as the $2n$ short roots $\pm \alpha_1, \dots, \pm \alpha_n$ and the $2n(n-1)$ long roots $\pm (\alpha_i + \alpha_j)$, and $\alpha_i - \alpha_j$ for $i,j \in \{ 1,\dots,n\}$.

By Theorem 5.11, it follows that only the rank 1 case of orthogonal screeners gives rise to new screeners, the others arising within the simply laced bases of screeners constructed.  
\end{proof}

{}
\end{document}